\documentclass[11pt, a4paper]{article}

\usepackage{graphicx,tikz,caption,subcaption,fontenc}
\usepackage{fullpage}
\usepackage{hyperref}
\usepackage{enumitem,verbatim}
\usepackage{amsmath}
\usepackage{amssymb}
\usepackage{amsfonts}
\usepackage{amsthm}
\usepackage[capitalise]{cleveref}
\usepackage{bm}

\usepackage[margin=2.5cm]{geometry}

\newtheorem{theorem}{Theorem}[section]
\newtheorem{lemma}[theorem]{Lemma}
\newtheorem{claim}[theorem]{Claim}

\newtheorem{conjecture}[theorem]{Conjecture}
\newtheorem{observation}[theorem]{Observation}
\newtheorem*{observation*}{Observation}
\newtheorem{proposition}[theorem]{Proposition}
\newtheorem{problem}[theorem]{Problem}
\newtheorem{question}[theorem]{Question}
\newtheorem*{question*}{Question}
\newtheorem{innerdefinition}{Definition}
\newenvironment{definition*}
  {
   \innerdefinition}
  {\endinnerdefinition}

\theoremstyle{definition}
\newtheorem{defn}[theorem]{Definition}

\theoremstyle{remark}

\newcounter{propcounter}

\newenvironment{poc}{\begin{proof}[Proof of claim]}{\end{proof}}

\newcommand{\C}[1]{{\protect\mathcal{#1}}}

\newcommand{\ex}{\mathrm{ex}}

\usepackage{tikz}
\tikzstyle{aNode} = [circle, fill = black]
\tikzstyle{bNode} = [circle,draw = black, thick]

\newcommand{\ppoints}[1]{%
\begin{tikzpicture}[inner sep = 0.7pt, #1]%
\node (1) at (0,-2) [aNode]{};
\node (3) at (1.5,-2) [aNode]{};
\node (2) at (0.75,-1) [aNode]{};
\end{tikzpicture}%
}

\def\points{\ppoints{scale=0.08}}

\usetikzlibrary{patterns}
\usetikzlibrary{shapes}
\usetikzlibrary{decorations.pathreplacing}
\usetikzlibrary{decorations.pathmorphing}
\usetikzlibrary{positioning}

\usepackage{todonotes}

\usepackage{xcolor}

\title{Vanishing orders, suspensions and zero degree Tur\'an densities}
\author{Jiangdong Ai\thanks{School of Mathematical Sciences and LPMC, Nankai University, Tianjin 300071, China. Supported by the National Natural Science Foundation of China (No.12522117). Email:\texttt {jd@nankai.edu.cn}.}
\and
Laihao Ding\thanks{School of Mathematics and Statistics, and Key Laboratory of Nonlinear Analysis \& Applications (Ministry of Education), Central China Normal University, Wuhan 430079, China. Supported by the Fundamental Research Funds for the Central Universities (CCNU25JCPT031, XJ2026006601). Email: \texttt{dinglaihao@ccnu.edu.cn}.}
\and 
Hong Liu\thanks{Extremal Combinatorics and Probability Group (ECOPRO),  Institute for Basic Science (IBS), Daejeon, South Korea. Supported by IBS-R029-C4. Email: \texttt{hongliu@ibs.re.kr}.}
\and Haotian Yang\thanks{Department of Mathematics, California Institute of Technology, Pasadena, USA. Supported by Seed Fund Program for International Research Cooperation of Shandong University. Email: \texttt{hyang3@caltech.edu}.}}

\begin{document}
\date{}
\maketitle

\begin{abstract}
For integers $1\le \ell<k$, the $\ell$-degree Tur\'an density $\pi_\ell(F)$ measures the minimum $\ell$-degree threshold that forces a copy of a fixed $k$-uniform hypergraph $F$, generalizing both the classical Tur\'an density $\pi_1$ and the codegree Tur\'an density $\pi_{k-1}$. Motivated by Erd\H{o}s' characterization of $k$-graphs with zero Tur\'an density, we study the structural implications of vanishing $\ell$-degree Tur\'an density.

Our main result concerns the case $\ell=2$. We prove that, for every $k\ge3$, if a $k$-graph $F$ satisfies $\pi_2(F)=0$, then $F$ admits a $2$-vanishing order, that is, a global vertex ordering under which all edges align canonically with respect to their pairs. This extends to all uniformities a structural phenomenon previously known for $3$-graphs, and gives a higher-degree analogue of the classical fact that $\pi_1(F)=0$ forces $F$ to be $k$-partite. In particular, the absence of a $2$-vanishing order is a structural obstruction to vanishing $2$-degree Tur\'an density.

We also establish a suspension principle connecting consecutive degree parameters. Given a $(k-1)$-graph $F$, let $\mathcal S_F$ be the $k$-graph obtained by adding an apex vertex $v$ and replacing each edge $e\in E(F)$ with $v\cup e$. We show that, for $2\le \ell<k$,
$\pi_{\ell}(\mathcal S_F)=0$ if and only if $\pi_{\ell-1}(F)=0$.
This provides a bridge between different degree Tur\'an densities and allows vanishing results to be lifted across uniformities and degree parameters.
As an application, we prove that except the classical Tur\'an density, all other degree Tur\'an densities accumulate at zero. 

The proof of our main result combines random geometric building blocks, a design-theoretic gluing scheme, and random sparsification to reconcile positive $2$-degree with local vanishing structure. 
\end{abstract}


\section{Introduction}
A central topic in extremal combinatorics is the \emph{Tur\'an number} 
$\ex(n,F)$, the maximum number of edges in an $n$-vertex $k$-uniform hypergraph (or \emph{$k$-graph}) avoiding a fixed $k$-graph $F$.  
Passing to the limit, the \emph{Tur\'an density} is defined by
$$\pi(F):=\lim_{n\to\infty}\frac{\ex(n,F)}{\binom nk}.$$
For graphs, that is, when $k=2$, the Tur\'an density $\pi(F)$ is determined for every graph $F$ by the celebrated Erd\H{o}s–Stone–Simonovits Theorem.
For $k\ge 3$, however, the situation is radically different. Even Tur\'an's original problem of determining the density of the complete $3$-graph $K_4^{(3)}$, posed in 1941, remains open. 
This difficulty has led to extensive study of hypergraph Tur\'an-type problems; see, for example,~\cite{l2norm,Erdos-sos,reihersurvey}.

\subsection{$\ell$-degree Tur\'an density}

A natural refinement of Tur\'an density replaces global edge density with local degree conditions. We first recall the relevant terminology.  
Let $H$ be a $k$-graph. For $S\subseteq V(H)$ with $|S|<k$, the \emph{link} of $S$, denoted by $L_H(S)$, is the $(k-|S|)$-graph on $V(H)\setminus S$ with edge set $\{e\setminus S: S\subset e\in E(H)\}$. 
The \emph{degree} of $S$ is $d_H(S)=|L_H(S)|$, and the \emph{minimum $\ell$-degree} $\delta_\ell(H)$ is the minimum of $d_H(S)$ over all $\ell$-subsets $S$.

Given a $k$-graph $F$, the \textit{$\ell$-degree Tur\'{a}n number} $\ex_{\ell}(n,F)$ is the maximum $\delta_{\ell}(H)$ that an $n$-vertex $F$-free $k$-graph $H$ can admit. The corresponding \textit{$\ell$-degree Tur\'{a}n density} is 
$$\pi_{\ell}(F)=\lim\limits_{n\to \infty}\frac{\ex_{\ell}(n,F)}{\binom{n-\ell}{k-\ell}}.$$ 
This limit always exists \cite{l-degree}, and for every $k$-graph $F$,
$$\pi_{k-1}(F)\leq\pi_{k-2}(F)\leq\cdots\leq\pi_{2}(F)\leq\pi_{1}(F)=\pi(F).$$
The notion of $\ell$-degree Tur\'an density generalizes both the classical Tur\'an density $\pi_1$ and the codegree Tur\'an density $\pi_{k-1}$.

\medskip

Determining $\pi_\ell(F)$ for $\ell\ge2$ is highly challenging, and most exact results concern $3$-graphs. 
Nagle~\cite{K43-nagle} and Czygrinow and Nagle~\cite{K43nagle} conjectured that 
$\pi_{2}(K_4^{(3)-}) = \tfrac14$ and $\pi_{2}(K_4^{(3)}) = \tfrac12$, respectively; 
the former was recently confirmed using flag algebras~\cite{K43-codegree}.  
Other $3$-graphs with determined codegree Tur\'an densities include the Fano plane~\cite{mubayi2005co}, 
$F_{3,2}$~\cite{falgas2015codegree}, 
tight cycles $C_r^{(3)}$ for $r\geq 10$~\cite{ma2024codegree,piga2024codegree}, 
and $C_r^{(3)-}$ for $r\ge5$~\cite{piga2022codegree}; 
see also~\cite{Luo-cycle,keevash2007codegree,lo2018codegree,Rong-cycle,James,zhang2021codegree}.

For $k\ge3$, the only general zero-density characterization remains the classical theorem of Erd\H{o}s~\cite{erdos1964extremal}, which states that $\pi(F)=0$ if and only if $F$ is $k$-partite. 
This naturally motivates the corresponding problem for higher degree Tur\'an densities.

\begin{problem}\label{chrac}
For integers $k>\ell\ge2$, characterize the $k$-graphs $F$ with $\pi_\ell(F)=0$.
\end{problem}

Even for $3$-graphs, Problem~\ref{chrac} appears highly nontrivial. 
Recent work of Lamaison, Wang and the last three authors~\cite{ding20243} revealed a close connection with the \emph{uniform} Tur\'an density $\pi_{\points}(F)$, defined as the supremum over all $d$ for which there exist infinitely many $F$-free $k$-graphs whose every induced linear-size subhypergraph has edge density at least $d$. 
In particular, for $3$-graphs it was shown in~\cite{ding20243} that $\pi_{2}(F)=0$ implies $\pi_{\points}(F)=0$. 
A deep theorem of Reiher, R\"{o}dl and Schacht~\cite{reiher2018hypergraphs} further gives a structural characterization of the latter condition: a $3$-graph has $\pi_{\points}(F)=0$ if and only if it admits a \textit{$2$-vanishing order}.

This suggests that vanishing degree Tur\'an density may in general force a rigid global ordering structure on the forbidden configuration. 
We formalize this through the following notion. Roughly speaking, an \emph{$\ell$-vanishing order} is a vertex ordering under which every edge behaves canonically with respect to its $\ell$-subsets, so that these subsets always occupy identical relative positions. 
Here an \emph{ordering} of a finite set $S$ is a bijection $\sigma:S\to [|S|]$. 
Denote by $S^{(t)}$ the collection of all $t$-subsets of a set $S$. 
The formal definition follows.

\begin{defn}[$\ell$-vanishing order]\label{order}
 Given integers $1\leq \ell\leq k$ and a \(k\)-graph \(F\), we say that an ordering $\sigma$ of \(V(F) \) is an \textit{\(\ell\)-vanishing order} of $F$ if there is a coloring
$$\varphi : V(F)^{(\ell)} \to \left\{\mathcal{L} : \mathcal{L} \in [k]^{(\ell)}\right\}$$
such that for every edge \(e = \{v_{1}, v_{2}, \ldots, v_{k}\}\in E(F)\) with \(\sigma(v_1) < \sigma(v_2) < \cdots < \sigma(v_k)\), it holds that
$$\varphi\left( \{v_{i} : i \in \mathcal{L}\}\right) = \mathcal{L}$$
 for every $\mathcal{L} \in [k]^{(\ell)}$. 
In particular, we call $\varphi$ an \textit{$\ell$-vanishing coloring} of  $F$ under $\sigma$.
\end{defn}

A basic observation is that a $k$-graph has a $1$-vanishing order if and only if it is $k$-partite, while every $k$-graph trivially admits a $k$-vanishing order. 
Combining the above results of~\cite{ding20243,reiher2018hypergraphs} gives the following structural consequence for $3$-graphs.

\begin{theorem}[\cite{ding20243,reiher2018hypergraphs}]\label{zeroco}
Let $F$ be a $3$-graph. If $\pi_{2}(F)=0$, then it has a $2$-vanishing order.  
\end{theorem}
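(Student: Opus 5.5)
The theorem follows by chaining two cited results, and the plan is simply to compose them. The first step invokes the result of~\cite{ding20243}: for a $3$-graph $F$, $\pi_2(F)=0$ implies $\pi_{\points}(F)=0$. The second step applies the characterization of Reiher, R\"odl and Schacht~\cite{reiher2018hypergraphs}: a $3$-graph has vanishing uniform Tur\'an density if and only if there are an ordering $\sigma$ of $V(F)$ and a colouring of the pairs $V(F)^{(2)}\to\{\text{red},\text{green},\text{blue}\}$ such that every edge $v_1<v_2<v_3$ has $v_1v_2$ red, $v_1v_3$ green and $v_2v_3$ blue. The third step identifies this colouring with the $2$-vanishing colouring of Definition~\ref{order} by sending red, green and blue to $\{1,2\},\{1,3\},\{2,3\}$ respectively. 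Pairs in no edge are coloured arbitrarily.

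Most of the substance lies in the first implication, so I sketch how I would prove it if it were not already available. Suppose $\pi_{\points}(F)>0$. Then there are $F$-free $3$-graphs that are $(d,\mu)$-dense on all linear subsets, and I want to convert this into $F$-free $3$-graphs with minimum codegree $\Omega(n)$. The natural approach is a blow-up or product construction. Take the Reiher--R\"odl--Schacht type constructions, which are built from random tournaments or colourings of pairs; because $F$ has no $2$-vanishing order, these constructions are $F$-free. One then modifies them so that every pair, and not just most pairs, lies in linearly many edges. For example, partition the vertex set and use a random colouring in which each pair receives each of the three ``roles'' with positive probability on a suitable vertex-ordered structure. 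Every pair $\{u,w\}$ then has a linear number of vertices $x$ for which the triple is consistent with the canonical pattern, while $F$-freeness is preserved because any copy of $F$ would induce a $2$-vanishing order.

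The main obstacle is this upgrade from typical density to a minimum codegree bound that holds for every pair, while keeping the construction $F$-free. Since the implication is cited, however, the proof of Theorem~\ref{zeroco} itself reduces to the short composition of the two results together with the relabelling of colours.
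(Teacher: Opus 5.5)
Your proof is correct and matches the paper: the theorem is obtained there by composing the implication $\pi_2(F)=0\Rightarrow\pi_{\points}(F)=0$ of \cite{ding20243} with the Reiher--R\"odl--Schacht characterisation, and your relabelling of red, green, blue as $\{1,2\},\{1,3\},\{2,3\}$ is exactly the identification with Definition~\ref{order}. One caution about your optional sketch: no construction organised around a single global canonical order can have large codegree, since by Observation~\ref{diff} with $k=3$, $\ell=2$ any $3$-graph with a global $2$-vanishing order has $\delta_2(H)\le 1$. The positive-codegree hosts must therefore only be locally $2$-vanishing, with the order depending on the bounded subset (as with the cyclic orders coming from $\mathcal{G}(n,r)$ in Section~\ref{lowerbound}).
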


\subsection{Our results}
Our main result extends \cref{zeroco} from $3$-graphs to all uniformities. It shows that vanishing $2$-degree Tur\'an density forces the same ordering structure in every $k$-uniform setting.

\begin{theorem}\label{2-vanishing}
Let $k\geq3$ and $F$ a $k$-graph. If $\pi_2(F)=0$, then it has a $2$-vanishing order. 
\end{theorem}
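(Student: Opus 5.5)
We argue by contrapositive: assuming $F$ has no $2$-vanishing order, we construct for every sufficiently large $n$ an $F$-free $k$-graph $H$ with $\delta_2(H)\ge c\binom{n-2}{k-2}$ for some constant $c=c(k)>0$. This gives $\pi_2(F)\ge c>0$.

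\textbf{The basic colouring construction.} Take a uniformly random ordering $\sigma$ of $[n]$ and a uniformly random colouring $\varphi:[n]^{(2)}\to [k]^{(2)}$. Let $H_0$ consist of all $k$-sets $e$ such that, listing $e$ in $\sigma$-order, every pair occupying positions $\mathcal L$ receives colour $\mathcal L$.

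Any copy of $F$ in $H_0$ pulls back $\sigma$ and $\varphi$ to a $2$-vanishing order of $F$ together with a $2$-vanishing colouring. Since $F$ has none, $H_0$ is $F$-free.

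The problem is the degree. A $k$-set lies in $H_0$ with probability $\binom{k}{2}^{-\binom k2}$, which is a positive constant, so the edge density is positive. The $2$-degrees, however, are not: a pair $\{u,v\}$ with colour $\mathcal L$ only lies in edges in which it sits at positions $\mathcal L$. This is fine. The real defect is that a pair lies in no edge at all whenever its colour is incompatible with the relative $\sigma$-position of its two vertices. For example, if $\mathcal L=\{1,2\}$, both vertices must be among the first few in each edge. That still works for most pairs, but extreme pairs, such as $u,v$ both near the end of $\sigma$ with colour $\{1,2\}$, have tiny degree.

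\textbf{Fixing the degree.} To repair this, I would replace the single global ordering with many \emph{random geometric building blocks}. Each block is a copy of the above construction on a random vertex subset, with its own ordering, where "position" is determined by geometric (e.g.\ uniform $[0,1]$) labels so that each pair's degree within the block is concentrated. The blocks are then glued along a design-theoretic scheme: a $2$-design, or a near-resolvable family of $m$-subsets, chosen so that every pair of $[n]$ is covered by a positive fraction of blocks in which it is well placed.

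The union of the blocks then has minimum $2$-degree $\Omega(n^{k-2})$.

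\textbf{Main obstacle.} The key difficulty is that a copy of $F$ in the union may use edges from several blocks. The local orderings then need not combine into a global $2$-vanishing order.

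I would handle this by random sparsification. Each block is kept with small probability, or each block's edges are thinned, so that with high probability any set of $|V(F)|$ vertices that spans edges from two or more blocks in an "interacting" way does not occur. Concretely, the goal is to ensure that any copy of $F$ has all its edges in one block, or decomposes along blocks meeting in at most one vertex. In the latter case, the $2$-vanishing orders on the pieces can be merged: they share no pair, so the colourings are consistent, and orderings can be concatenated by a tree-like amalgamation.

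This requires choosing block sizes $m$ so that blocks pairwise share at most one vertex (a design property), which makes shared pairs impossible. Copies of $F$ could still be spread cyclically across many blocks. I would rule these out with a girth-type condition on the block hypergraph, obtained by a deletion argument. This is affordable because $F$ has bounded size, while each pair sees polynomially many blocks.

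Balancing this girth requirement against the requirement that each pair receives degree $\Omega(n^{k-2})$ is where I expect the main technical effort to lie.
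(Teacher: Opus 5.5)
\textbf{There is a genuine gap: your scheme cannot give positive minimum $2$-degree, whatever the girth bookkeeping.} You require the blocks to pairwise share at most one vertex. Then every pair $\{u,v\}\subseteq[n]$ lies in at most one block $B$, so $d_H(\{u,v\})=d_{H_B}(\{u,v\})$. But each block is a copy, possibly thinned, of your basic construction. It is therefore globally $2$-vanishing under its own ordering.

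For such $k$-graphs the degree defect is structural, not a matter of unluckily coloured ``extreme'' pairs. Let $1,2$ be the first two vertices of $B$.
\begin{itemize}
\item If some edge $\{1,2\}\cup S$ lies in $E(H_B)$, fix $s\in S$. Every edge containing $\{2,s\}$ must have its first vertex outside $\{2,s\}$ and before $2$, so it contains $1$. Hence $d_{H_B}(\{2,s\})\le\binom{|B|-3}{k-3}$.
\item Otherwise $d_{H_B}(\{1,2\})=0$.
\end{itemize}
This is Observation~\ref{diff} of the paper. Since that pair lies in no other block, $\delta_2(H)=O(n^{k-3})$ no matter how the blocks are chosen, placed, kept or thinned. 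Deleting or thinning only lowers degrees and preserves global $2$-vanishing.

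Your phrase ``every pair is covered by a positive fraction of blocks'' suggests the alternative of covering each pair by many blocks. But then blocks share pairs, and your amalgamation step collapses. That step needs pieces sharing no pair, merged by tree-like gluing under a girth condition, and the proposal offers no replacement for it. So the degree requirement and the $F$-freeness argument are never met simultaneously.

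\textbf{The missing idea is that the blocks themselves must be only \emph{locally} $2$-vanishing.}
\begin{itemize}
\item \emph{Building blocks.} The paper's geometric blocks are $(2,1^{k-2})$-type $k$-graphs whose $A$-part carries the circular random graph $\mathcal G(n,r)$. All neighbourhoods lie in arcs of length $2\pi/r$. So any set of at most $r/2$ vertices can be cut at a large gap, giving a cyclic ordering that works (Lemmas~\ref{random} and~\ref{21}). Meanwhile every $A$--$A$, $A$--$B_i$ and $B_i$--$B_j$ pair has codegree $\Omega(n^{k-2})$ (Lemma~\ref{good}).
\item \emph{Gluing.} These blocks are placed on parts $V_1,\dots,V_N$ along a $(k-1)$-uniform design. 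So they overlap in entire parts, the opposite of your one-vertex intersections.
\item \emph{Sparsification.} The pairs inside each $V_i$ are randomly assigned to a single block. This makes the links of any vertex in different blocks vertex-disjoint (Claim~\ref{claim:packing}).
\item \emph{Ordering.} A cluster ordering of the parts, cyclic inside each part, then works via Lemma~\ref{1vanishing}. The cyclic orderings are chosen depending on the bounded vertex set under consideration.
\end{itemize}
That dependence of the ordering on the bounded set, rather than a fixed ordering per block, is exactly what escapes Observation~\ref{diff}.
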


\cref{2-vanishing} may be viewed as a higher-degree analogue of the classical fact that if the Tur\'an density $\pi_1(F)=0$, then $F$ must have a 1-vanishing order (i.e.~be $k$-partite). It shows that the absence of a $2$-vanishing order is a structural obstruction to vanishing $2$-degree Tur\'an density. More conceptually, the theorem reflects an interesting local--global forcing phenomenon. 
The condition $\pi_2(F)=0$ is a local embedding condition imposed on host hypergraphs. It forces a rigid global ordering structure inside the forbidden configuration.

\cref{2-vanishing} suggests that vanishing orders capture the structural core of degree Tur\'an zero-density phenomena, motivating the following conjecture.

\begin{conjecture}\label{general}
Let $k\geq 4, \ell\geq3$ be integers satisfying that $\ell<k$. Then any $k$-graph $F$ with $\pi_{\ell}(F)=0$ has an $\ell$-vanishing order.     
\end{conjecture}

In \cref{example}, we show that \cref{general} is best possible in the sense that the $\ell$-vanishing order cannot in general be replaced by an $(\ell-1)$-vanishing order. 
As additional evidence, we give the following weaker necessary condition for all $\ell$.

\begin{theorem}\label{j-vanishing}
Let $F$ be a $k$-graph with $\pi_{\ell}(F)=0$. Then 
\begin{enumerate}
[label=\rm(\alph*), ref=(\alph*)]          
\item $F$ has an $(\ell+1)$-vanishing order, and \label{cond1}
\item $L_F(S)$ is a $(k-\ell+1)$-partite $(k-\ell+1)$-graph for every $S\in V(F)^{(\ell-1)}$. \label{cond2}
\end{enumerate}
\end{theorem}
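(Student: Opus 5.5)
Both parts will be proved by contraposition: if $F$ fails the property, we build $n$-vertex $F$-free $k$-graphs $H$ with $\delta_\ell(H)\ge c\binom{n-\ell}{k-\ell}$ for a fixed $c>0$.

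\emph{Part (a).} Take a uniformly random ordering of $[n]$ and a uniformly random colouring $\psi:[n]^{(\ell+1)}\to[k]^{(\ell+1)}$. Let $H$ consist of the $k$-sets $e=\{x_1<\dots<x_k\}$ such that $\psi(\{x_i:i\in\mathcal L\})=\mathcal L$ for every $\mathcal L\in[k]^{(\ell+1)}$. If a copy of $F$ lies in $H$, then the restriction of the ordering to the copy is an ordering $\sigma$ of $V(F)$, and $\varphi:=\psi$ restricted to the copy is an $(\ell+1)$-vanishing colouring of $F$ under $\sigma$. Hence if $F$ has no $(\ell+1)$-vanishing order, $H$ is $F$-free.

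For the degree bound, fix an $\ell$-set $S$. A $k$-set $e\supseteq S$ imposes $\binom{k}{\ell+1}$ constraints, and none of them lies inside $S$, since each constrained set has size $\ell+1>|S|$. Conditioned on the order type of $e$, each constraint holds with probability $\binom{k}{\ell+1}^{-1}$, independently over distinct $(\ell+1)$-sets. So $\mathbb E\, d_H(S)=c\binom{n-\ell}{k-\ell}$ with $c=\binom{k}{\ell+1}^{-\binom{k}{\ell+1}}$.

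To get concentration, view $d_H(S)$ as a function of the colours of $(\ell+1)$-sets meeting $[n]\setminus S$. Changing one colour changes $d_H(S)$ by at most $O(n^{k-\ell-1})$, because the set contains a vertex outside $S$ and so contributes only that many edges through $S$. The number of colour variables that matter is $O(n^{\ell+1})$ in general, but only sets containing at least one vertex of $S$, or contained in an edge through $S$, affect $d_H(S)$. McDiarmid's inequality over $O(n^{\ell+1})$ variables gives deviation about $n^{(\ell+1)/2}\cdot n^{k-\ell-1}$, which is $o(n^{k-\ell})$ exactly when $\ell<1$, so this naive bound fails.

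The fix is to expose the colours differently. For $\ell$-set $S$, the relevant randomness is the colouring of the $(\ell+1)$-sets $S\cup\{x\}$, which are $n$ variables, together with colours of sets meeting $S$ in fewer vertices. Take a Janson / Kim--Vu type bound, or restrict to the partite-like structure below. This variance control is the main technical obstacle, and a cleaner route avoids it.

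\emph{Cleaner route for part (a).} Replace the independent colouring by a structured one. Take the ordering random, and colour each $(\ell+1)$-set $T$ by a function of a random map $g:[n]^{(\ell)}\to[k]^{(\ell)}$ on its $\ell$-subsets, plus independent noise only on sets disjoint from no fixed $S$. Alternatively, apply the standard Chernoff plus union bound after conditioning, in the spirit of the constructions for $\pi_{k-1}$. Concretely, first try computing $\mathrm{Var}\, d_H(S)$ directly. Pairs of $k$-sets through $S$ are correlated only if they share an $(\ell+1)$-set, that is, at least one further vertex, and there are $O(n^{2(k-\ell)-1})$ such pairs. So the variance is $o(\mathbb E^2)$, but Chebyshev alone does not permit a union bound over $n^\ell$ sets. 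I would therefore use Janson's inequality for the lower tail, since $d_H(S)$ is a count of "all constraints satisfied" events, each a monotone intersection after encoding colours as indicator variables. Janson gives failure probability $\exp(-\Omega(\mu^2/\Delta))=\exp(-\Omega(n))$, which survives the union bound.

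\emph{Part (b).} Suppose $L_F(S_0)$ is not $(k-\ell+1)$-partite for some $(\ell-1)$-set $S_0$. Build $H$ on $[n]$ with a random colouring $\chi:[n]^{(\ell-1)}\times[n]\to[k-\ell+1]$, that is, each $(\ell-1)$-set $A$ gets an independent random vertex colouring $\chi_A$. Let $e\in H$ iff for every $(\ell-1)$-subset $A\subset e$, the colouring $\chi_A$ is rainbow on $e\setminus A$.

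If $F\subseteq H$, the link of the image of $S_0$ is properly $(k-\ell+1)$-partitioned by $\chi_{S_0}$, which is a contradiction. For an $\ell$-set $S$, each $k$-set through $S$ lies in $H$ with constant probability $p=\big((k-\ell+1)!/(k-\ell+1)^{k-\ell+1}\big)^{\binom{k}{\ell-1}}$, up to the dependence structure. Concentration again uses Janson, with dependencies arising through shared vertices $x$ in colour variables $\chi_A(x)$, where $A\not\subseteq S$ forces an extra shared vertex. So the lower tail is $\exp(-\Omega(n))$ and the union bound over all $\ell$-sets gives $\delta_\ell(H)\ge (p/2)\binom{n-\ell}{k-\ell}$. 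The main obstacle throughout is the concentration step. I would first attempt Janson after checking that the membership events are monotone in a suitable encoding.
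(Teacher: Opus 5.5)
Your constructions are sound. In (a) yours is exactly the paper's random $(\ell+1)$-colouring; the random ordering is superfluous, since the natural order on $[n]$ suffices. Your $F$-freeness and expectation computations are right.

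For (b) you take a genuinely different route. The paper constructs nothing new there: it observes $\mathcal S^{\ell-1}_{L_F(S)}\subseteq F$, applies \cref{suspension} $\ell-1$ times to get $\pi(L_F(S))=0$, and invokes Erd\H{o}s' theorem. Your rainbow construction is a valid self-contained alternative. Pulling back $\chi_{\phi(S_0)}$ along an embedding $\phi$ gives a $(k-\ell+1)$-partition of $L_F(S_0)$. This bypasses the suspension machinery, but then needs its own concentration argument.

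That concentration argument is a genuine gap, in both parts. You discard McDiarmid because you charge every one of the $O(n^{\ell+1})$ colour variables the worst-case influence $O(n^{k-\ell-1})$. You fall back on Janson's inequality, but its hypotheses are not met. Janson (via Harris' inequality) needs events of the form $\{A_i\subseteq\mathbf R\}$ for a random set $\mathbf R$ with independent elements. In that setting any two events of positive probability intersect.

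\begin{itemize}
\item In (a), take $T=S\cup\{x\}$ and completions $R,R'\ni x$ in which $T$ sits in different positions (e.g.\ $\min R<\min T\le\min R'$). The events $\{S\cup R\in E(H)\}$ and $\{S\cup R'\in E(H)\}$ then prescribe different values of $\psi(T)$, so they are disjoint. Hence no encoding makes them increasing.
\item In (b) there is a second problem. The $\ell$ variables $\chi_A(s)$ with $A\cup\{s\}=S$ are read by every completion of $S$. So all pairs of events are dependent, $\Delta=\Omega(\mu^2)$, and Janson would give only a constant bound even if it applied.
\end{itemize}

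The missing ingredient is correct bounded-differences bookkeeping. In (a), an $(\ell+1)$-set $T$ with $|T\setminus S|=j\ge1$ affects only $O(n^{k-\ell-j})$ completions, and there are only $O(n^j)$ such $T$. Hence $\sum_T c_T^2=O(n^{2(k-\ell)-1})$. McDiarmid with $t=\frac p2\binom{n-\ell}{k-\ell}$ then gives failure probability $e^{-\Omega(n)}$, which survives the union bound over $\binom n\ell$ sets.

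The paper packages this as $n-\ell$ independent blocks $\mathcal X_i$. Fix an order $y_1,\dots,y_{n-\ell}$ of $[n]\setminus S$; block $\mathcal X_i$ collects the $(\ell+1)$-sets whose first vertex outside $S$ is $y_i$. Each block changes $d_H(S)$ by at most $\binom{n-\ell-1}{k-\ell-1}$.

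For your (b), first condition on the $\ell$ global variables $\chi_A(s)$. By symmetry of the colours, the conditional expectation is still $p\binom{n-\ell}{k-\ell}$. The same weighted McDiarmid then applies to the remaining variables, all of which have $|(A\cup\{x\})\setminus S|\ge1$.
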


Observe that any $\ell$-vanishing order is automatically an $(\ell+1)$-vanishing order. 
Moreover, as shown in \cref{example}, $k$-graphs admitting $\ell$-vanishing orders satisfy \ref{cond2}. 
When $\ell\in\{1,k-1\}$, condition~\ref{cond1} is redundant since it follows from~\ref{cond2}. 
For all other values of $\ell$, we provide examples in \cref{example} showing that neither \ref{cond1} nor \ref{cond2} implies the other.

We next state a suspension result, which plays a supporting but important role. Given a $(k-1)$-graph $F$, the \emph{suspension} of $F$, denoted by $\mathcal S_F$, is the $k$-graph obtained from $F$ by adding an apex vertex $v$ and replacing every edge $e\in E(F)$ by $e\cup\{v\}$. The following theorem transfers vanishing from $(\ell-1)$-degree Tur\'an density in uniformity $k-1$ to $\ell$-degree Tur\'an density in uniformity $k$. 

\begin{theorem}\label{suspension}
Let $2\le \ell<k$ and $F$ a $(k-1)$-graph.  
Then $ \pi_\ell(\mathcal{S}_F)=0$ if and only if $\pi_{\ell-1}(F)=0$.
\end{theorem}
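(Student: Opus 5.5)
We prove the two directions separately. Write $m=\binom{n-\ell}{k-\ell}$ for the normalization in $\pi_\ell$ for $k$-graphs, which equals the normalization $\binom{(n-1)-(\ell-1)}{(k-1)-(\ell-1)}$ used for $\pi_{\ell-1}$ of $(k-1)$-graphs on $n-1$ vertices.

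\emph{($\Rightarrow$)} Suppose $\pi_{\ell-1}(F)>0$. Then for some $c>0$ and infinitely many $n$ there is an $F$-free $(k-1)$-graph $G$ on $[n]$ with $\delta_{\ell-1}(G)\ge c\binom{n-\ell+1}{k-\ell}$. Our goal is to build an $\mathcal S_F$-free $k$-graph $H$ on roughly $n$ vertices with linear-in-$m$ minimum $\ell$-degree. The first attempt is to take $H$ to be the $k$-graph of all $k$-sets $e$ with $e\setminus\{\min e\}\in E(G)$. The problem is that a copy of $\mathcal S_F$ may place its apex anywhere, not necessarily at the minimum.

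The cleaner construction uses the \emph{random-link} idea. Take a random ordering of $[n]$ and, independently for each vertex $x$, an $F$-free $(k-1)$-graph $G_x$ on $[n]$ chosen as a uniformly random relabeling of $G$. Put $e\in E(H)$ if and only if $e\setminus\{x\}\in E(G_x)$ for \emph{every} $x\in e$. Then any copy of $\mathcal S_F$ with apex $v$ has all its edges $e\cup\{v\}$ satisfying $e\in G_v$, so $F\subseteq G_v$, which is a contradiction. Hence $H$ is $\mathcal S_F$-free for any choice of the $G_x$.

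The main obstacle is the degree bound. For an $\ell$-set $S$ we need about $c'm$ many $(k-\ell)$-sets $T$ such that $(S\cup T)\setminus\{x\}\in G_x$ for all $k$ vertices $x\in S\cup T$. Requiring this for all $x$ simultaneously makes the edge events correlated, and it is not at all clear that the density survives; I do not expect a naive argument to go through. I would instead first try a blow-up/product construction: choose a vertex partition into parts and let the apex role be determined by colours, so that only one $x$ per edge is constrained. Replacing $H$ by an iterated construction that uses $\pi_{\ell-1}(F)>0$ as the local structure seems necessary, and I anticipate this is the genuinely hard step, where I would consult the random geometric building blocks developed later in the paper.

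\emph{($\Leftarrow$)} Suppose $\pi_{\ell-1}(F)=0$, and let $H$ be an $n$-vertex $k$-graph with $\delta_\ell(H)\ge\varepsilon m$, $n$ large. Fix any vertex $v$. Its link $L=L_H(v)$ is a $(k-1)$-graph on $n-1$ vertices, and every $(\ell-1)$-set $S'$ satisfies
\[
d_L(S')=d_H(S'\cup\{v\})\ge \varepsilon m .
\]
So $\delta_{\ell-1}(L)\ge\varepsilon\binom{(n-1)-(\ell-1)}{(k-1)-(\ell-1)}$. Since $\pi_{\ell-1}(F)=0<\varepsilon$, for $n$ large $L$ contains a copy of $F$. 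Adding $v$ to each edge of this copy gives a copy of $\mathcal S_F$ in $H$. Hence $\pi_\ell(\mathcal S_F)=0$. This direction is immediate.
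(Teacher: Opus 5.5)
Your ($\Leftarrow$) direction is correct and matches the paper's remark that $\pi_\ell(\mathcal S_F)\le\pi_{\ell-1}(F)$. Your random-link construction for the other direction is exactly the paper's (the extra random ordering of $[n]$ plays no role, and it is cleanest to take $G$ on $n-1$ vertices with $G_x$ a uniformly random copy on $[n]\setminus\{x\}$). Your check that $H$ is $\mathcal S_F$-free is also right.

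The gap is that you then abandon this construction without proving the one thing that matters, namely $\delta_\ell(H)=\Omega\bigl(\binom{n-\ell}{k-\ell}\bigr)$. So the implication $\pi_\ell(\mathcal S_F)=0\Rightarrow\pi_{\ell-1}(F)=0$ is not established. Your fallbacks do not close it. A construction in which only one designated vertex per edge is constrained hits the same obstruction you found for the $e\setminus\{\min e\}$ attempt: the link of $v$ then contains $(k-1)$-sets from edges where $v$ is not the constrained vertex, and nothing prevents these from forming $F$. The geometric building blocks serve the local $2$-vanishing property and are unrelated to this problem.

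The degree bound does go through; the correlation you feared is milder than it looks. Fix an $\ell$-set $S$, put $U=[n]\setminus S$, and let $\sigma_x$ be the relabeling defining $G_x$. For a single completion $T\in U^{(k-\ell)}$, the $k$ events $(S\cup T)\setminus\{x\}\in E(G_x)$ depend on distinct, independent $\sigma_x$. Hence each $T$ survives with probability $\rho^k$, where $\rho\ge c$ is the edge density of $G$ (by double counting). Dependence across different $T$ comes only from the $\ell$ maps $\sigma_x$, $x\in S$, which affect every completion at once; that is why a single application of McDiarmid fails. The paper handles this in two stages.
\begin{enumerate}
\item The family $\mathcal A_x=\{T\in U^{(k-\ell)}:(S\setminus\{x\})\cup T\in E(G_x)\}$ has size at least $\delta_{\ell-1}(G)$ deterministically. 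Conditional on $\sigma_x(S\setminus\{x\})$, it is a uniformly random relabeling of a fixed dense link of $G$. A transposition changes $|\mathcal A\cap\mathcal A_x|$, for any fixed $\mathcal A$, by at most $2\binom{n-\ell-1}{k-\ell-1}$. So bounded differences for random bijections, plus induction over $x\in S$, give $\bigl|\bigcap_{x\in S}\mathcal A_x\bigr|\ge (c/2)^{\ell}\binom{n-\ell}{k-\ell}$ with probability $1-e^{-\Omega(n)}$.
\item Condition on $(\sigma_x)_{x\in S}$. Count the $T\in\bigcap_{x\in S}\mathcal A_x$ with $(S\cup T)\setminus\{y\}\in E(G_y)$ for all $y\in T$. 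This count has mean at least $c^{k-\ell}\bigl|\bigcap_{x\in S}\mathcal A_x\bigr|$. It is a function of the independent $(\sigma_y)_{y\in U}$, each of which affects at most $\binom{n-\ell-1}{k-\ell-1}$ completions. McDiarmid then gives an $e^{-\Omega(n)}$ lower tail.
\end{enumerate}
A union bound over the $\binom n\ell$ choices of $S$ finishes the proof.
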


\cref{suspension} implies that if if a $k$-graph $F$ satisfies $\pi_\ell(F)=0$, then $\pi_{\ell-1}(L_F(v))=0$ for every $v\in V(F)$.
Thus \cref{suspension} provides a bridge between consecutive degree parameters and is used below to propagate the vanishing phenomena obtained from \cref{2-vanishing}.

We finally present an application of the above results. 
Let $\Pi_{\ell}^{k}=\{\pi_{\ell}(F) : F \text{ is a $k$-graph}\}$ denote the set of all possible $\ell$-degree Tur\'an densities of $k$-graphs. 
Given the difficulty of determining individual values of $\pi_\ell(F)$, a natural alternative is to study the global structure of $\Pi_{\ell}^{k}$.
For $\ell=1$, this direction is closely related to the classical Erd\H{o}s jumping conjecture and remains highly challenging. 
It was only recently shown that $\Pi_{1}^{k}$ admits an accumulation point \cite{Accu-Turan-Bjarne}. Moreover, by the theorem of Erd\H{o}s~\cite{erdos1964extremal}, $0$ is not an accumulation point of $\Pi_{1}^{k}$. 
In contrast, Piga and Sch\"{u}lke~\cite{piga2023hypergraphs} recently proved that $0$ is an accumulation point of $\Pi_{k-1}^{k}$, providing the first example among Tur\'an-type densities that does not jump at zero. 
This naturally leads to the following question.

\begin{question}\label{zero}
For integers $2\leq \ell\leq k-2$, is zero an accumulation point of $\Pi_{\ell}^{k}$? 
\end{question}

Combining \cref{2-vanishing} with \cref{suspension}, we answer this question affirmatively. Thus, except for the classical Tur\'an density, all other degree Tur\'an densities accumulate at zero.

\begin{theorem}\label{j-accumulation}
For all integers $k>\ell\geq 2$, zero is an accumulation point of $\Pi_{\ell}^k$.    
\end{theorem}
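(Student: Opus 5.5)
The plan is to reduce to the case $\ell=2$ by iterated suspension, and to handle $\ell=2$ using \cref{2-vanishing} for positivity plus a direct construction for smallness. First I record a trivial quantitative half of \cref{suspension}: for every $(k-1)$-graph $F$ and $2\le\ell<k$,
$$\pi_\ell(\mathcal S_F)\le \pi_{\ell-1}(F).$$
Indeed, let $H$ be a $k$-graph with $\delta_\ell(H)\ge(\pi_{\ell-1}(F)+\varepsilon)\binom{n-\ell}{k-\ell}$, and fix any vertex $v$. The link $L_H(v)$ is a $(k-1)$-graph on $n-1$ vertices. Every $(\ell-1)$-set $S$ in it satisfies $d_{L_H(v)}(S)=d_H(S\cup\{v\})\ge\delta_\ell(H)$, and $\binom{n-\ell}{k-\ell}=\binom{(n-1)-(\ell-1)}{(k-1)-(\ell-1)}$. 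Hence $L_H(v)$ contains a copy of $F$ for large $n$, and adding $v$ gives a copy of $\mathcal S_F$. Iterating, if $G$ is an $r$-graph with $r=k-\ell+2$ and $\mathcal S^{(\ell-2)}_G$ denotes its $(\ell-2)$-fold suspension (a $k$-graph), then $\pi_\ell(\mathcal S^{(\ell-2)}_G)\le\pi_2(G)$. Moreover, applying \cref{suspension} $\ell-2$ times, $\pi_\ell(\mathcal S^{(\ell-2)}_G)=0$ if and only if $\pi_2(G)=0$.

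It therefore suffices to find, for each $r\ge3$ (here $r=k-\ell+2\ge3$ since $\ell<k$), a sequence of $r$-graphs $G_t$ with $0<\pi_2(G_t)\to0$. The suspensions $\mathcal S^{(\ell-2)}_{G_t}$ then have $\ell$-degree densities that are positive and tend to $0$, and after passing to a subsequence of distinct values zero is an accumulation point of $\Pi_\ell^k$. For $r=3$ this is exactly the theorem of Piga and Sch\"ulke~\cite{piga2023hypergraphs}, since $\pi_2$ is the codegree density of $3$-graphs. This also settles the case $\ell=k-1$ directly.

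For $r\ge4$ I would build $G_t$ from the Piga--Sch\"ulke $3$-graphs $F_t$, for instance by the expansion $G_t=F_t^{+}$, in which each edge $e$ of $F_t$ is enlarged by $r-3$ new private vertices. Positivity would come from \cref{2-vanishing}: it suffices that $G_t$ has no $2$-vanishing order. I would check this by showing that a $2$-vanishing order of $F_t^{+}$ restricts to one of $F_t$, and by choosing (or modifying) $F_t$ to contain a fixed $3$-graph without a $2$-vanishing order, whose blow-ups keep small codegree density. The upper bound $\pi_2(F_t^{+})\to0$ needs a reduction from an $r$-graph $H$ with $\delta_2(H)\ge\varepsilon n^{r-2}$ to a $3$-graph with large minimum codegree. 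I would take a random partition $V=V_0\cup U_1\cup\dots\cup U_{r-3}$ and define a $3$-graph on $V_0$ whose edges are the triples having many extensions by transversals of the $U_i$. The aim is to show that after deleting few vertices this $3$-graph has minimum codegree about $\varepsilon n$, so that it contains $F_t$ once $\varepsilon>\pi_2(F_t)$. Private vertices are then supplied greedily, since each edge has $\Omega(n^{r-3})$ extensions.

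The main obstacle is this last transfer. Averaging preserves density but not minimum codegree, and a few pairs may have tiny degree in the auxiliary $3$-graph. If cleaning by vertex deletion fails, my fallback is to work directly in uniformity $r$ and adapt the Piga--Sch\"ulke construction: they use tight-cycle-type hypergraphs whose codegree density is governed by a vanishing-order obstruction. I would reprove their upper bound with $2$-sets in place of codegree pairs, using the ordering and colouring framework of \cref{order}.
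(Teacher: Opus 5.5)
You reduce the problem the same way the paper does. The bound $\pi_\ell(\mathcal S_F)\le\pi_{\ell-1}(F)$, together with \cref{suspension} iterated $\ell-2$ times, leaves the task of finding $r$-graphs $G_t$, $r=k-\ell+2$, with $0<\pi_2(G_t)\to0$. For $r=3$ (that is, $\ell=k-1$) citing Piga--Sch\"ulke is fine.

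The gap is the base case $r\ge4$. This covers every $\ell\le k-2$, which is the actual content of the theorem. Your positivity argument rests on the claim that a $2$-vanishing order of $F_t^{+}$ restricts to one of $F_t$, and this is false. The private vertices let a vertex occupy different absolute positions in different edges, which destroys the obstruction.

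For example, $K_4^{(3)}$ has no $2$-vanishing order. Under $1<2<3<4$, the pair $\{1,3\}$ sits in positions $\{1,3\}$ of $123$ but in positions $\{1,2\}$ of $134$. Yet its $4$-uniform expansion is $2$-vanishing under $p_{234}<1<p_{134}<2<p_{124}<3<p_{123}<4$, where $p_e$ is the private vertex of $e$. Here each $i\in[4]$ is in position $i$ of every edge containing it. More generally, $F^+$ is $2$-vanishing whenever $F$ has an $r$-colouring that is injective on every edge.

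Your proposed repair, making every $F_t$ contain a fixed $3$-graph $F_0$ without a $2$-vanishing order, is self-defeating. By monotonicity $\pi_2(F_t)\ge\pi_2(F_0)$, and $\pi_2(F_0)>0$ by \cref{zeroco}, so $\pi_2(F_t)$ cannot tend to $0$. What you would really need is a sequence of $3$-graphs with $\pi_2\to0$ whose $r$-expansions have no $2$-vanishing order. That is no easier than the original problem, and your fallback of adapting Piga--Sch\"ulke in uniformity $r$ is not spelled out.

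The step you flag as the main obstacle is actually easy. Put $xyz$ into a $3$-graph $T$ when $d_H(xyz)\ge\eta n^{r-3}$. Then $(r-2)d_H(xy)=\sum_z d_H(xyz)$ gives $d_T(xy)\ge(\alpha-O(\eta))n$ for every pair, with no averaging loss, and private vertices can be added greedily. So $\pi_2(F^+)\le\pi_2(F)$, but this only gives the upper bound.

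The missing idea is to build the small-density examples directly in uniformity $r$, where \cref{diff} supplies the obstruction: an $m$-vertex $r$-graph with $\delta_2>\binom{m-3}{r-3}$ has no $2$-vanishing order. The paper (\cref{lowdensity}) takes the tensor product $F^{(r)}_{m,2}$ of all $m$-vertex $r$-graphs with minimum $2$-degree at least $\binom{m-3}{r-3}+1$.

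This product has no $2$-vanishing order. Take $v$ $\sigma$-minimal among vertices having an earlier disjoint vertex $u$. Then $u,v$ are the first two vertices of any edge $e$ through them. The degree condition in every coordinate gives an edge $e'\supseteq\{v,x_3\}$ avoiding $u$'s coordinates, and in $e'$ the vertex $v$ comes first. This contradicts \cref{index}.

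Its density tends to $0$. By \cref{subset}, any host with $\delta_2\ge\varepsilon\binom{n-2}{r-2}$ has an $m$-set inducing a member of the family. Since $F^{(r)}_{m,2}$ lies in a blow-up of each member, \cref{supersaturation} applies.

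Positivity then follows from \cref{2-vanishing} in uniformity $r$, exactly as you intended, and your suspension step finishes the argument.
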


\subsection{Proof idea}\label{sec:pf-idea}
We first discuss the main idea behind \cref{2-vanishing}.
The contrapositive of \cref{2-vanishing} says that if a $k$-graph $F$ admits no $2$-vanishing order, then $\pi_2(F)>0$. A natural strategy would be to construct $k$-graphs that are $2$-vanishing (and hence $F$-free) while having positive minimum $2$-degree. However, a direct construction is impossible: a \emph{global} $2$-vanishing order is incompatible with positive minimum $2$-degree, as shown below.

\begin{observation}\label{diff}
Let $2\le \ell<k$ and let $H$ be a $k$-graph on $n$ vertices that admits an $\ell$-vanishing order. Then $\delta_{\ell}(H)\leq\binom{n-\ell-1}{k-\ell-1}=o(n^{k-\ell})$.
\end{observation}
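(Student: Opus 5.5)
The plan is to exhibit a single $\ell$-set with small degree: the set formed by the first $\ell+1$ vertices of the order with one vertex removed. Fix an $\ell$-vanishing order $\sigma$ of $H$ with $\ell$-vanishing coloring $\varphi$, and label the vertices $u_1,\dots,u_n$ so that $\sigma(u_i)=i$. Let $T=\{u_1,\dots,u_{\ell+1}\}$. The key point is that if an edge $e$ contains an $\ell$-subset $S$, then $\varphi(S)$ records the positions of $S$ inside $e$ under $\sigma$. In particular, if $S$ consists of the $\ell$ smallest vertices of $e$, then $\varphi(S)=[\ell]$.

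I would argue by pigeonhole over the $\ell$-subsets of $T$ whose color is $[\ell]$. Any two $\ell$-subsets of $T$ differ, so at least one $\ell$-subset $S\subset T$ either has $\varphi(S)\neq[\ell]$ or has a definite non-initial placement. It is cleaner to fix $S=\{u_1,\dots,u_{\ell-1},u_{\ell+1}\}$, i.e.\ $T\setminus\{u_\ell\}$, and split into two cases according to $\varphi(S)$.

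\textbf{Case $\varphi(S)=[\ell]$.} For every edge $e\supseteq S$, the vertices of $S$ must occupy the first $\ell$ positions of $e$. Hence $e$ contains no vertex with index below $\ell+1$ outside $S$, which excludes $u_\ell$. So the remaining $k-\ell$ vertices of $e$ are chosen from $\{u_{\ell+2},\dots,u_n\}$. That alone only gives $\binom{n-\ell-1}{k-\ell}$, so more is needed. The vanishing coloring also constrains the $\ell$-sets mixing $S$ with later vertices, and this is where I expect the main obstacle to lie.

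A cleaner choice avoids this: take instead $S'=\{u_1,\dots,u_\ell\}$, the first $\ell$ vertices, and compare it with $S=T\setminus\{u_\ell\}$.

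\textbf{Simpler route.} Take $S=\{u_1,\dots,u_{\ell-1},u_{\ell+1}\}$. In any edge $e\supseteq S$, the vertices $u_1,\dots,u_{\ell-1}$ are necessarily the first $\ell-1$ vertices of $e$, since nothing precedes them. So $\varphi(S)=[\ell-1]\cup\{j\}$, where $j$ is the position of $u_{\ell+1}$ in $e$. The index $j$ equals $\ell$ if $u_\ell\notin e$ and equals $\ell+1$ if $u_\ell\in e$. Because $\varphi(S)$ is a single fixed value, either every edge containing $S$ contains $u_\ell$, or none does.
\begin{itemize}
\item In the first case, $d_H(S)\le\binom{n-\ell-1}{k-\ell-1}$ directly.
\item In the second case, all edges containing $S$ avoid $u_\ell$. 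Then apply the same argument to $S''=\{u_1,\dots,u_\ell\}$ with the vertex $u_{\ell+1}$: either every edge through $S''$ contains $u_{\ell+1}$ (again giving degree at most $\binom{n-\ell-1}{k-\ell-1}$), or none does.
\end{itemize}

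Iterating this along the order is the step I expect to need care. The sets $\{u_1,\dots,u_{\ell-1},u_m\}$ for $m=\ell,\ell+1,\dots$ each have a fixed color $[\ell-1]\cup\{j_m\}$, where $j_m-\ell$ counts the vertices of $e$ with index in $[\ell,m)$. For the degree of $\{u_1,\dots,u_{\ell-1},u_m\}$ to be positive, its edges must contain exactly $j_m-\ell$ vertices among $u_\ell,\dots,u_{m-1}$. I would pick $m$ to be the smallest index with $j_m>\ell$, or argue that $m=\ell$ cannot be the second vertex of every edge through $\{u_1,\dots,u_{\ell-1}\}$. Either way, one vertex beyond the first $\ell$ in the set is forced, giving degree at most $\binom{n-\ell-1}{k-\ell-1}$. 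If $\delta_\ell(H)=0$ the bound is trivial, so we may assume all these degrees are positive while running this argument.
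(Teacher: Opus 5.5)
\textbf{There is a genuine gap: your second case is never closed.} Your first case is sound. Since $u_1,\dots,u_{\ell-1}$ are the globally smallest vertices, for $S=\{u_1,\dots,u_{\ell-1},u_{\ell+1}\}$ one has $\varphi(S)=[\ell-1]\cup\{j\}$, with $j=\ell+1$ exactly when $u_\ell\in e$. In that case $d_H(S)\le\binom{n-\ell-1}{k-\ell-1}$.

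The unresolved case is $j=\ell$, where every edge through $S$ avoids $u_\ell$. Neither of your continuations handles it.
\begin{itemize}
\item For $S''=\{u_1,\dots,u_\ell\}$ there is no dichotomy. These are the $\ell$ smallest vertices of $V(H)$, so $\varphi(S'')=[\ell]$ whether or not $u_{\ell+1}$ lies in the edge. The coloring carries no information about $u_{\ell+1}$.
\item For $S_m=\{u_1,\dots,u_{\ell-1},u_m\}$, the color only prescribes the number $j_m-\ell$ of vertices of the interval $\{u_\ell,\dots,u_{m-1}\}$ in each edge through $S_m$. It pins down a specific vertex only if $j_m=m$, i.e.\ the whole interval is required, and nothing forces this at the smallest $m$ with $j_m>\ell$. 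What you actually get there is $d_H(S_m)\le(m-\ell)\binom{n-\ell-1}{k-\ell-1}$. This is not the claimed bound, and not even $o(n^{k-\ell})$ unless $m$ is bounded.
\end{itemize}
Concretely, take $k=3$, $\ell=2$ and the edges $\{1,2,5\},\{1,3,5\},\{1,4,6\}$. They are $2$-vanishing under the natural order: $\{1,5\}$ sits in positions $\{1,3\}$ in both edges containing it, and every other pair lies in at most one edge. All $S_m$ have positive degree, $j_3=j_4=2$ (so you are in your second case), and $j_5=3$. Yet the two edges through $\{1,5\}$ share no further vertex.

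Your opening pigeonhole over $T=\{u_1,\dots,u_{\ell+1}\}$ cannot work either. For the edges $\{1,2,4\},\{1,3,5\},\{2,3,6\}$, every pair in $T=\{1,2,3\}$ has positive degree. Yet for each $i\in T$, all edges through $T\setminus\{i\}$ avoid $i$, so no subset of $T$ forces any vertex.

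\textbf{The missing idea} is to use an actual edge through the first $\ell$ vertices and delete $u_1$ instead of $u_\ell$; this is the paper's argument. If $d_H(\{u_1,\dots,u_\ell\})=0$ there is nothing to prove. Otherwise pick an edge $e\supseteq\{u_1,\dots,u_\ell\}$ and a vertex $s\in e\setminus\{u_1,\dots,u_\ell\}$, and set $X=\{u_2,\dots,u_\ell,s\}$. In $e$ the first position is occupied by $u_1\notin X$, so $1\notin\varphi(X)$. Hence every edge containing $X$ has a vertex preceding $u_2$, and the only such vertex is $u_1$. Thus $d_H(X)\le\binom{n-\ell-1}{k-\ell-1}$, with no case split.

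The point is that the gap in front of $X$ is the singleton $\{u_1\}$, and the edge $e$ certifies that this gap must be filled. In your family the relevant gap $\{u_\ell,\dots,u_{m-1}\}$ is a singleton only for $m=\ell+1$, and there the color is free to say ``avoid''.
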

\begin{proof}
Without loss of generality, let $V(H)=[n]$ and suppose the natural order is an $\ell$-vanishing order of $H$. If $L_H([\ell])=\varnothing$, then $\delta_\ell(H)=0$ and we are done. Otherwise, there exists a $(k-\ell)$-set $S$ such that $e:=[\ell]\cup S\in E(H)$. Fix $s\in S$ and set $X=\{2,3,\dots,\ell,s\}$.  

In the edge $e$, the smallest vertex is $1\notin X$. Hence, by the $\ell$-vanishing property, every edge containing $X$ must also have its smallest vertex outside $X$. In particular, every such edge must contain $1$. Therefore the number of edges containing $X$ is at most $\binom{n-\ell-1}{k-\ell-1},$ as desired.
\end{proof}

To overcome this obstruction, we replace the global requirement by a \emph{local} one. We construct $k$-graphs with positive minimum $2$-degree such that every induced subhypergraph on a bounded number of vertices is $2$-vanishing (see \cref{vanishing2}). This balances two competing features: vanishing structure, which forces sparsity, and positive minimum $2$-degree, which enforces density.

The construction combines three ingredients.

\smallskip
\noindent\emph{(1) Random geometric building blocks.}
Extending ideas from an earlier work~\cite{ding20243} of the last three authors on $3$-graphs, we construct random $(2,1^{k-2})$-type $k$-graphs whose distinguished part carries a geometric random graph structure. This guarantees that every bounded induced subhypergraph admits a suitable cluster ordering and is therefore $2$-vanishing. Within a single block, all but $k-2$ missing pair types already have positive codegree .

\smallskip
\noindent\emph{(2) Design-theoretic gluing scheme.}
In the $3$-uniform case, the missing pair type can be handled by cyclically gluing three pieces of building blocks. Such a simple gluing no longer works for higher uniformity. We instead use a $(k-1)$-uniform combinatorial design to glue many $(2,1^{k-2})$-type blocks so that every pair of vertex parts is covered in a controlled manner, ensuring that all pair types acquire positive codegree. The additional structural complexity arising in higher uniformity necessitates this design-theoretic gluing.

\smallskip
\noindent\emph{(3) Random sparsification.}
After gluing, links of vertices may mix edges from different blocks, potentially destroying local $2$-vanishing. We therefore perform a random sparsification that separates these link structures while preserving positive minimum $2$-degree with high probability. This additional step is another new ingredient required in higher uniformity.

To prove \cref{suspension}, we start with an $F$-free $(k-1)$-graph $H$ of positive minimum $(\ell-1)$-degree, and for every $x\in[n]$ independently assign a random copy $H_x$ of $H$ on vertex set $[n]\setminus\{x\}$. 
Then a $k$-set $e\subseteq[n]$ is declared to be an edge if and only if $e\setminus\{x\}\in E(H_x)$ for every $x\in e$. 
This makes each vertex link a subhypergraph of an $F$-free $(k-1)$-graph, so the resulting $k$-graph is $\mathcal{S}_F$-free. The independent random relabellings make the local constraints sufficiently transverse, and bounded-differences estimates show that every $\ell$-set still has many completions.

To prove \cref{j-accumulation}, we first construct a sequence of $k$-graphs $(F_i)_{i\geq1}$ with no $2$-vanishing order, whose $2$-degree Tur\'an densities tend to zero (see \cref{lowdensity}). By \cref{2-vanishing}, these densities are positive, and hence zero is an accumulation point of $\Pi_2^k$. The suspension theorem then serves as a bridge from $\pi_{\ell-1}$ to $\pi_\ell$, allowing us to deduce \cref{j-accumulation} for all $\ell\ge2$ by induction.
The construction of the sequence of $k$-graphs $(F_i)_{i\geq1}$ is inspired by \cref{diff}.
For a fixed integer $m$, we take the \textit{tensor product} of all $m$-vertex $k$-graphs whose minimum $2$-degree is larger than $\binom{m-3}{k-3}$. The tensor product operation drives the corresponding $\ell$-degree Tur\'an density down, while the minimum degree condition of each coordinate $k$-graph ensures that the resulting hypergraph admits no $2$-vanishing order.

\medskip

\noindent{\bf Notations.~}Let $F$ be a $k$-graph. 
We use $F(t)$ to denote the \textit{$t$-blowup} of $F$, that is, a $k$-graph obtained from $F$ by replacing each vertex with a vertex class of size $t$ and replacing each edge with a complete $k$-partite $k$-graph on the corresponding vertex classes.
The following operation provides us a natural way to merge several orderings into a larger one. Let $S_1,S_2$ be two disjoint finite sets and let $\sigma_1,\sigma_2$ be two orderings of $S_1,S_2$ respectively. The sum of $\sigma_1$ and $\sigma_2$, denoted by $\sigma_1 \oplus \sigma_2$, is an ordering of $S_1\cup S_2$ where
\[\sigma_1 \oplus \sigma_2(s)= \left \{
\begin{array}{ll}
 \sigma_1(s),  & s\in S_1;\\
 \sigma_2(s)+|S_1|, & s\in S_2.
\end{array}
\right.\]
For more than two orderings $\sigma_1,\sigma_2,\ldots,\sigma_k$, the sum of them, denoted by $\sum_{i=1}^{k}\sigma_i$, is inductively defined by 
\[\sum_{i=1}^{k}\sigma_i=\left(\sum_{i=1}^{k-1}\sigma_i\right)\oplus \sigma_k.\]
Reversely, given an ordering $\sigma$ of a finite set $S$ and a subset $T\subseteq S$, we naturally obtain an ordering of $T$,  denoted by $\sigma|_T$, by letting $\sigma|_T(x)<\sigma|_T(y)$ if $\sigma(x)<\sigma(y)$ for all $x,y\in T$.

\section{Hypergraphs with vanishing orders}\label{example}
In this section, we develop the following recursive characterization of vanishing orders via links, and give two families of hypergraphs with(out) specific vanishing orders. This link-based viewpoint is more convenient for local arguments.

\begin{lemma}\label{vanishingorder}
Let $F$ be a $k$-graph, and $\sigma$ an ordering of $V(F)$. Then for any $1\leq j<\ell\leq k$, $\sigma$ is an $\ell$-vanishing order of $F$ if and only if for every $S\in V(F)^{(\ell-j)}$, $\sigma|_{V(L_F(S))}$ is a $j$-vanishing order of $L_F(S)$.   
\end{lemma}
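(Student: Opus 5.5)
The plan is to prove both directions directly from \cref{order}. The one piece of bookkeeping needed is how positions shift when we pass from an edge to its link. Fix an edge $e\in E(F)$ and an $\ell$-set $U\subseteq e$, and write $U=\{u_1,\dots,u_\ell\}$ with $\sigma(u_1)<\dots<\sigma(u_\ell)$. Let $p_t$ denote the position of $u_t$ in $e$ under $\sigma$, so that $\{p_1,\dots,p_\ell\}$ is exactly the $\ell$-subset $\mathcal L\in[k]^{(\ell)}$ in \cref{order}. Now split $U=S\sqcup T$ with $|S|=\ell-j$ and $|T|=j$, and view $e\setminus S$ as an edge of $L_F(S)$ under $\sigma|_{V(L_F(S))}$. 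Then the position of $u_t\in T$ in $e\setminus S$ is
\[
p_t-\bigl|\{s: u_s\in S,\ s<t\}\bigr|.
\]
The key point is that this correction term depends only on which indices of $U$ lie in $S$, and not on $e$.

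\emph{Forward direction.} Let $\varphi$ be an $\ell$-vanishing coloring, and fix $S\in V(F)^{(\ell-j)}$. We define $\psi_S$ on $j$-subsets $T$ of $V(F)\setminus S$ as follows. Put $U=S\cup T$, and read off from $\varphi(U)=\{p_1<\dots<p_\ell\}$ the values $p_t$ for those indices $t$ with $u_t\in T$. Then set $\psi_S(T)$ to be the set of shifted values given by the displayed formula. This is a $j$-subset of $[k-\ell+j]$, because the shifted positions lie in $e\setminus S$, which has size $k-\ell+j$. By the shift observation, for every edge $e\setminus S$ of $L_F(S)$ and every $T$, the set $\psi_S(T)$ is exactly the set of positions of $T$ in $e\setminus S$. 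Hence $\psi_S$ is a $j$-vanishing coloring of $L_F(S)$ under $\sigma|_{V(L_F(S))}$.

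\emph{Converse direction.} Suppose that for each $S$ we have a $j$-vanishing coloring $\psi_S$ of $L_F(S)$. For an $\ell$-set $U$ lying in no edge of $F$, we define $\varphi(U)$ arbitrarily. Otherwise we set $\varphi(U)$ to be the position set of $U$ in some edge $e\supseteq U$. The only thing to check is that this is well defined, that is, any two edges $e,e'\supseteq U$ give the same $p_1,\dots,p_\ell$.

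To see this, fix an index $t$. Since $j\ge1$, we have $\ell-j\le \ell-1$, so we can choose $S\subseteq U\setminus\{u_t\}$ with $|S|=\ell-j$, and then take $T=U\setminus S\ni u_t$. Both $e\setminus S$ and $e'\setminus S$ are edges of $L_F(S)$ containing $T$. The $j$-vanishing property of $\psi_S$ therefore says that $T$ occupies the same positions in $e\setminus S$ and in $e'\setminus S$. In particular the position of $u_t$ agrees, because position sets are matched in increasing order. The correction term in the displayed formula is the same for $e$ and $e'$, so $p_t(e)=p_t(e')$. Doing this for every $t$ shows $\varphi$ is well defined, and it is then an $\ell$-vanishing coloring by construction.

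Neither direction hides a real obstacle. The only delicate step is the well-definedness in the converse. A single split $S\sqcup T$ determines only the positions of the elements of $T$, so one must vary $S$ with $t$, and this is exactly where $j\ge 1$ is used. The remaining care is to be precise about how positions are renumbered inside links, and about the degenerate case of $\ell$-sets lying in no edge.
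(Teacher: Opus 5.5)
Your proof is correct and follows essentially the same route as the paper: both compare the positions of an $\ell$-set $U$ in an edge $e$ with the positions of $U\setminus S$ in the link edge $e\setminus S$, and in the converse both choose an $(\ell-j)$-set $S\subseteq U$ avoiding the vertex $u_t$ whose position is in question, which is where $j\ge 1$ enters. The differences are cosmetic. The paper goes through the gap vectors $\boldsymbol{I}_{X,\sigma}$ of \cref{index}, argues by contradiction, and uses only the first index $t$ where the two edges disagree. You build the colorings directly from the explicit shift formula, which makes the paper's ``direct verification'' explicit. For $\ell$-sets lying in no edge, you can simply define $\psi_S(T)$ arbitrarily.
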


In this paper, the following particular instance of \cref{vanishingorder} will be especially useful.

\begin{lemma}\label{1vanishing}
Let $F$ be a $k$-graph and $\sigma$ an ordering of $V(F)$. Then for any $2\leq\ell\leq k$, $\sigma$ is an $\ell$-vanishing order of $F$ if and only if for every $S\in V(F)^{(\ell-1)}$, $\sigma|_{V(L_F(S))}$ is a $1$-vanishing order of $L_F(S)$.  
\end{lemma}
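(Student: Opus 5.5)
Lemma~\ref{1vanishing} is the special case $j=1$ of Lemma~\ref{vanishingorder}, so I will argue it directly from Definition~\ref{order}, keeping track of positions. Write $\sigma$ for the ordering. For an $(\ell-1)$-set $S$, every edge $f$ of $L_F(S)$ comes from an edge $e=S\cup f$ of $F$. List $e$ as $v_1,\dots,v_k$ in $\sigma$-order. Then $S$ occupies a position set $P_e(S)\in[k]^{(\ell-1)}$, and $f$ occupies the complement $[k]\setminus P_e(S)$. Listing $f$ in $\sigma|_{V(L_F(S))}$-order, its $i$-th vertex is the $i$-th element of $[k]\setminus P_e(S)$ in $e$.

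\emph{Forward direction.} Let $\varphi$ be an $\ell$-vanishing coloring. The key claim is that $P_e(S)$ does not depend on the edge $e\supseteq S$.

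To prove it, take $\mathcal L\in[k]^{(\ell)}$ with $\mathcal L\supseteq P_e(S)$ and an extra position $p\notin P_e(S)$. Then $\varphi(S\cup\{v_p\})=\mathcal L$, and $S$ sits in $\mathcal L$ at the positions obtained by deleting one element. If a second edge $e'$ had $P_{e'}(S)\neq P_e(S)$, I would compare colorings of $\ell$-sets containing $S$. The cleanest route is to recover $P_e(S)$ from $\varphi$. For each $s\in S$, the position of $s$ in $e$ is determined by the colors $\varphi(S\cup\{u\})$, $u\in e\setminus S$: it is the rank of $s$ inside $S\cup\{u\}$, read off through $\mathcal L$. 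Concretely, choose $u=v_p$ with $p$ exceeding all of $P_e(S)$ if one exists, or otherwise smallest. Then $\varphi(S\cup\{u\})=P_e(S)\cup\{p\}$, and removing the extreme element gives $P_e(S)$.

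The subtle point, and what I expect to be the main obstacle, is showing that this recovery is independent of $u$ and $e$. Because $\ell<k$ when $\ell\le k-1$, I will use the fact that the $\ell$-colorings of the sets $S\cup\{u\}$ jointly determine $P_e(S)$ as the set of positions that \emph{every} such color contains, namely the intersection over $u$. If that intersection fails to be canonical, I would instead define $\psi$ below via $\varphi$ on $S\cup\{u\}$ directly. Given the claim, define the $1$-vanishing coloring $\psi_S(u)$ as the index $i$ such that $u$ is the $i$-th element of $[k]\setminus P(S)$ in any edge. This is well defined because $\varphi(S\cup\{u\})=P(S)\cup\{q\}$ determines $q$ and hence $i$. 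By construction it satisfies the $1$-vanishing condition on $L_F(S)$.

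\emph{Reverse direction.} Given $1$-vanishing colorings $\psi_S$ of each $L_F(S)$, I will define $\varphi(T)$ for an $\ell$-set $T$ as follows. Pick any edge $e\supseteq T$ and set $\varphi(T)=P_e(T)$ (arbitrary if no such edge exists), then check independence of $e$. For $t\in T$ with $S=T\setminus\{t\}$, $\psi_S(t)$ fixes the rank $i$ of $t$ in $[k]\setminus P_e(S)$. I will argue by induction on $\ell$, using Lemma~\ref{vanishingorder} for smaller parameters, that $P_e(S)$ is already canonical. Hence $P_e(T)=P_e(S)\cup\{\text{$i$-th element of }[k]\setminus P_e(S)\}$ is independent of $e$. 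In the base case $\ell=2$, $S$ is a single vertex $w$, and its position in $e$ is recovered from the $\psi_{\{x\}}$-color of $w$ for any other $x\in e$. Comparing two edges through a common vertex then gives consistency. This consistency check is again the delicate step.
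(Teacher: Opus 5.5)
Your argument in both directions depends on a false claim: that for an $(\ell-1)$-set $S$ the position set $P_e(S)$ is the same for every edge $e\supseteq S$. That claim would make $\sigma$ an $(\ell-1)$-vanishing order, and $\ell$-vanishing orders are not $(\ell-1)$-vanishing in general. Here is a concrete counterexample. Let $k=3$, $\ell=2$, $V(F)=\{0,1,2,3,4\}$ with the natural order, and $E(F)=\{\{1,2,3\},\{0,1,4\}\}$. Every pair lies in at most one edge, so the order is $2$-vanishing. Every link $L_F(\{x\})$ is a matching, hence $1$-vanishing. Yet $P_{\{1,2,3\}}(\{1\})=\{1\}$ while $P_{\{0,1,4\}}(\{1\})=\{2\}$.

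This breaks several steps of your plan:
\begin{itemize}
\item the intersection trick in the forward direction cannot produce a canonical $P(S)$;
\item your definition of $\psi_S(u)$ as the rank of $u$ in $[k]\setminus P(S)$ is not well defined;
\item the induction on $\ell$ in the reverse direction would be proving a false statement;
\item your base case fails, since two edges meeting only in $w$ can place $w$ at different positions and there is no second common vertex $x$ to compare through.
\end{itemize}
You flagged this step as delicate; it is in fact unprovable.

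The missing idea is to stop tracking $P_e(S)$ and track one vertex at a time with an edge-independent offset. For $u\notin S$ and any edge $e\supseteq S\cup\{u\}$, the position of $u$ in $e$ equals its position in the link edge $e\setminus S$ plus $c_S(u):=|\{s\in S:\sigma(s)<\sigma(u)\}|$. This offset depends only on $S$, $u$ and $\sigma$.

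\emph{Forward direction.} The position $q$ of $u$ in $e$ is the $(c_S(u)+1)$-th smallest element of $\varphi(S\cup\{u\})$. So $\psi_S(u):=q-c_S(u)$ is a $1$-vanishing coloring of $L_F(S)$, arbitrary on vertices lying in no link edge. This is your fallback made precise, and it never uses $P(S)$.

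\emph{Reverse direction.} Take an $\ell$-set $T$, an edge $e\supseteq T$ and $t\in T$. The hypothesis on $L_F(T\setminus\{t\})$ places $t$ at position $\psi_{T\setminus\{t\}}(t)+c_{T\setminus\{t\}}(t)$ in $e$, independently of $e$. Hence $P_e(T)$ is independent of $e$, and $\varphi(T):=P_e(T)$ works with no induction on $\ell$.

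This is essentially the paper's route. Proposition~\ref{index} reduces $\ell$-vanishing to edge-independence of position sets of $\ell$-sets. The proof of Lemma~\ref{vanishingorder} with $j=1$ then takes $S=R\setminus\{v_t\}$, where $v_t$ is the vertex at the first differing coordinate.
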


Recall that a $k$-graph has a $1$-vanishing order if and only if it is $k$-partite. 
Hence, by \cref{1vanishing}, any $k$-graph admitting an $\ell$-vanishing order satisfies condition~\ref{cond2} of \cref{j-vanishing}.


\subsection{Proof of the equivalent formulation}
Let $\sigma$ be an ordering of a finite set $S$, and 
$X=\{x_1,x_2,\ldots,x_s\}\subseteq S$ 
with $\sigma(x_1)<\sigma(x_2)<\ldots<\sigma(x_s)$.
For any subset $Y\subseteq S$, let 
$\boldsymbol{I}_{X,\sigma}(Y)\in\mathbb{Z}^{s}$ denote the vector whose $i$-th coordinate counts the number of elements $y\in Y$ with $\sigma(x_{i-1})<\sigma(y)<\sigma(x_i)$. 
The proof of~\cref{vanishingorder} builds on the following observation.

\begin{proposition}\label{index}
Let $F$ be a $k$-graph, and $\sigma$ an ordering of $V(F)$. Then for any $1\leq\ell\leq k$, $\sigma$ is an $\ell$-vanishing order of $F$ if and only if 
$\boldsymbol{I}_{S,\sigma}(e)= \boldsymbol{I}_{S,\sigma}(e')$ for every $S\in V(F)^{(\ell)}$ and every pair of edges $e,e'\in E(F)$ containing $S$.
\end{proposition}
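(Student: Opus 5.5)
The key point is that, for $S\subseteq e$ with $|S|=\ell$, the relative position of $S$ inside the ordered edge $e$ (the set $\mathcal L\in[k]^{(\ell)}$ of positions it occupies) is determined by $\boldsymbol{I}_{S,\sigma}(e)$, and conversely. Write $S=\{x_1,\dots,x_\ell\}$ with $\sigma(x_1)<\dots<\sigma(x_\ell)$ (so $s=\ell$ in the definition of $\boldsymbol I$), and let $e=\{v_1,\dots,v_k\}$ be ordered by $\sigma$. If $x_i=v_{p_i}$, then the $i$-th coordinate of $\boldsymbol{I}_{S,\sigma}(e)$ is the number of vertices of $e$ strictly between $x_{i-1}$ and $x_i$ (with $x_0$ read as "before everything"). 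This number equals $p_i-p_{i-1}-1$, where $p_0=0$. Hence $p_i=i+\sum_{j\le i}\boldsymbol{I}_{S,\sigma}(e)_j$. So the position set $\mathcal L_e(S):=\{p_1,\dots,p_\ell\}$ and the vector $\boldsymbol{I}_{S,\sigma}(e)$ determine each other via this bijection. This is the first step, and it is purely bookkeeping.

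For the forward direction, suppose $\varphi$ is an $\ell$-vanishing coloring. For any edge $e\supseteq S$, applying the definition with $\mathcal L=\mathcal L_e(S)$ gives $\varphi(S)=\mathcal L_e(S)$. Indeed, $\{v_i:i\in\mathcal L_e(S)\}=S$ by construction. Thus any two edges $e,e'$ containing $S$ satisfy $\mathcal L_e(S)=\varphi(S)=\mathcal L_{e'}(S)$, and by the bijection of the first step $\boldsymbol{I}_{S,\sigma}(e)=\boldsymbol{I}_{S,\sigma}(e')$.

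For the converse, assume the vectors agree for all edges through each $S$. Define $\varphi(S):=\mathcal L_e(S)$ for any edge $e\supseteq S$; by the first step and the hypothesis, this does not depend on the choice of $e$. If $S$ lies in no edge, let $\varphi(S)$ be an arbitrary element of $[k]^{(\ell)}$. Now take an edge $e=\{v_1,\dots,v_k\}$ and $\mathcal L\in[k]^{(\ell)}$, and set $S=\{v_i:i\in\mathcal L\}$. Then $\mathcal L_e(S)=\mathcal L$, so $\varphi(S)=\mathcal L$, which is exactly the defining condition of an $\ell$-vanishing coloring.

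There is no real obstacle. The only care needed is in the index conventions of $\boldsymbol I$: the first coordinate counts the elements preceding $x_1$, and elements of $Y$ after $x_s$ are not recorded. This is harmless because $|e|=k$ is fixed, so the number of trailing elements is determined by the other coordinates.
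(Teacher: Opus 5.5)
Your proposal is correct and is essentially the paper's own proof. The paper likewise shows that the position set $P_{S,\sigma}(e)$ and $\boldsymbol{I}_{S,\sigma}(e)$ determine each other via $i_j=a_1+\cdots+a_j+j$, and then notes that an $\ell$-vanishing coloring exists exactly when this position set does not depend on the choice of edge $e\supseteq S$; you just write out the two directions, and the assignment of $\varphi$ on $\ell$-sets lying in no edge, more explicitly.
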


\begin{proof}
Fix $1\le \ell\le k$. Let $S=\{v_1,\dots,v_\ell\}\subseteq V(F)$ with
$\sigma(v_1)<\cdots<\sigma(v_\ell)$, and let $e\in E(F)$ contain $S$.
Write $e=\{w_1,\dots,w_k\}$ with $\sigma(w_1)<\cdots<\sigma(w_k)$, and define
\[
P_{S,\sigma}(e):=\{\, i\in[k]: w_i\in S\,\}\in [k]^{(\ell)},
\]
the set of positions occupied by $S$ inside $e$. By definition, $\boldsymbol I_{S,\sigma}(e)\in\mathbb Z^\ell$ records the numbers
\[
a_1:=|\{y\in e: \sigma(y)<\sigma(v_1)\}|,\qquad 
a_i:=|\{y\in e: \sigma(v_{i-1})<\sigma(y)<\sigma(v_i)\}|\ \ (2\le i\le \ell),
\]
i.e.\ it counts vertices of $e\setminus S$ lying before $v_1$ and between consecutive $v_{i-1},v_i$.

We claim that for fixed $(k,\ell)$, the data $P_{S,\sigma}(e)$ and $\boldsymbol I_{S,\sigma}(e)$
determine each other.
Indeed, if $P_{S,\sigma}(e)=\{i_1<\cdots<i_\ell\}$, then $\boldsymbol I_{S,\sigma}(e)=(i_1-1, i_2-i_1-1, \dots, i_\ell-i_{\ell-1}-1).$
Conversely, given $\boldsymbol I_{S,\sigma}(e)=(a_1,\dots,a_\ell)$ we recover $i_1=a_1+1$, $i_2=a_1+a_2+2$, \dots, $i_\ell=a_1+\cdots+a_\ell+\ell$.

Therefore, for fixed $S$ and all $e,e'\supseteq S$, the condition
$\boldsymbol I_{S,\sigma}(e)=\boldsymbol I_{S,\sigma}(e')$
is equivalent to
$P_{S,\sigma}(e)=P_{S,\sigma}(e')$. Now $\sigma$ is an $\ell$-vanishing order precisely when there exists a map
$\varphi:V(F)^{(\ell)}\to [k]^{(\ell)}$ such that for every edge
$e=\{w_1,\dots,w_k\}$ in $\sigma$-increasing order and every $\C L\in[k]^{(\ell)}$ we have
$\varphi(\{w_i:i\in \C L\})=\C L$. Equivalently, for every $\ell$-set $S$ contained in some edge we must have
$\varphi(S)=P_{S,\sigma}(e)$ for all $e\supseteq S$, which is possible if and only if the above
position set is well-defined. This yields the desired equivalence.
\end{proof}

\begin{proof}[Proof of~\cref{vanishingorder}]
Suppose first that $\sigma$ is an $\ell$-vanishing order of $F$. 
If there exists a subset $S\in V(F)^{(\ell-j)}$ such that $\sigma|_{V(L_F(S))}$ is not a $j$-vanishing order of $L_F(S)$, then by~\cref{index} there is a $j$-subset $R\subseteq V(F)\setminus S$ and two edges $e,e'\in L_F(S)$ containing $R$ such that 
$\boldsymbol{I}_{R,\sigma}(e)\neq\boldsymbol{I}_{R,\sigma}(e')$. 
Then for the $\ell$-set $S\cup R$ and the two edges $e\cup S$ and $e'\cup S$ of $F$, it follows that 
$\boldsymbol{I}_{S\cup R,\sigma}(e\cup S)\neq\boldsymbol{I}_{S\cup R,\sigma}(e'\cup S)$, 
contradicting the assumption that $\sigma$ is an $\ell$-vanishing order of $F$.  

Conversely, suppose that $\sigma$ satisfies that $\sigma|_{V(L_F(S))}$ is a $j$-vanishing order of $L_F(S)$ for every $S\in V(F)^{(\ell-j)}$. 
If $\sigma$ is not an $\ell$-vanishing order of $F$, then there exist an $\ell$-set $R\subseteq V(F)$ and two edges $e,e'$ containing $R$ such that 
$\boldsymbol{I}_{R,\sigma}(e)\neq\boldsymbol{I}_{R,\sigma}(e')$. 
Let $t$ be the smallest index for which the $t$-th coordinates of 
$\boldsymbol{I}_{R,\sigma}(e)$ and $\boldsymbol{I}_{R,\sigma}(e')$
differ. 
Write $R=\{v_1,v_2,\ldots,v_{\ell}\}$ with 
$\sigma(v_1)<\sigma(v_2)<\ldots<\sigma(v_{\ell})$. 
Define
\[
S=
\begin{cases}
\{v_1,\ldots,v_{\ell-j}\}, & \text{if } t>\ell-j,\\
\{v_1,\ldots,v_{t-1},v_{t+1},\ldots,v_{\ell-j+1}\}, & \text{if } t\le \ell-j.
\end{cases}
\]
Then $S$ is an $(\ell-j)$-subset of $R$, and $R\setminus S$ is a $j$-subset. 
A direct verification shows that 
\[
\boldsymbol{I}_{R\setminus S,\sigma}(e\setminus S)
\neq
\boldsymbol{I}_{R\setminus S,\sigma}(e'\setminus S),
\]
so $\sigma|_{V(L_F(S))}$ is not a $j$-vanishing order of $L_F(S)$, a contradiction.  
\end{proof}


\subsection{Two examples}

Given an $\ell$-graph $F$, the \textit{$k$-expansion} $F^+$ of $F$ is the $k$-graph obtained from $F$ by enlarging each edge of $F$ with a vertex subset of size $k-\ell$ disjoint from $V(F)$ such that distinct edges are enlarged by disjoint subsets.  

\begin{observation}\label{example1}
For any integer $r>k$, the $k$-expansion $K_{r}^{(\ell)+}$ of $K_{r}^{(\ell)}$ satisfies the following.
\begin{itemize}
\item $\pi_{\ell}(K_{r}^{(\ell)+})=0$;
\item $K_{r}^{(\ell)+}$ has an $\ell$-vanishing order;
\item when $\ell\geq2$, there exists an $(\ell-2)$-subset $S$ of $V(K_{r}^{(\ell)+})$ whose link is not $(k-\ell+2)$-partite.
\end{itemize}
\end{observation}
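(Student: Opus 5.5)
We handle the three bullet points separately. Throughout, write $F:=K_r^{(\ell)+}$ with core $V_0=V(K_r^{(\ell)})=[r]$, and for each $\ell$-set $T\subseteq V_0$ let $A_T$ be the private $(k-\ell)$-set, so that the edges are $T\cup A_T$.

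\emph{Vanishing order.} Put the core first, $\sigma=\sigma_0\oplus\sigma_1$, where $\sigma_0$ orders $V_0$ arbitrarily and $\sigma_1$ orders the expansion vertices arbitrarily. In every edge $T\cup A_T$, the core vertices then occupy positions $1,\dots,\ell$. Consider an $\ell$-set $S$ contained in two distinct edges $e\neq e'$. Since distinct edges meet only inside $V_0$, we get $S\subseteq e\cap e'\subseteq V_0$. Hence $S=T=T'$, which is impossible for distinct edges. So every $\ell$-set lies in at most one edge, and the criterion of \cref{index} holds vacuously. In fact any ordering works; we fix $\sigma$ only for definiteness.

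\emph{Zero density.} Fix $\varepsilon>0$ and a $k$-graph $H$ on $n$ vertices with $\delta_\ell(H)\ge\varepsilon\binom{n-\ell}{k-\ell}$, where $n$ is large. We embed $F$ greedily.
\begin{enumerate}
\item Choose any $r$ distinct vertices as the core.
\item Process the $\binom r\ell$ core $\ell$-sets $T$ one by one. For each, pick an edge of $L_H(T)$ that avoids all previously used vertices, of which there are at most $M:=r+\binom r\ell(k-\ell)=O(1)$.
\end{enumerate}
The number of $(k-\ell)$-sets in $L_H(T)$ meeting the used set is at most $M\binom{n}{k-\ell-1}=o(n^{k-\ell})$. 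Since $d_H(T)\ge\varepsilon\binom{n-\ell}{k-\ell}$, a valid choice always exists. This gives $\pi_\ell(F)=0$. This step is routine, and no supersaturation is needed.

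\emph{Non-partite link.} Take $S\subseteq V_0$ with $|S|=\ell-2$, which uses $\ell-2\le r$ core vertices. The link $L_F(S)$ is the $(k-\ell+2)$-graph with edges $\{a,b\}\cup A_{S\cup\{a,b\}}$, one for each pair $\{a,b\}\subseteq V_0\setminus S$. Suppose $L_F(S)$ were $(k-\ell+2)$-partite. Then each such edge uses every part exactly once, so $a$ and $b$ lie in different parts. Thus the map sending each vertex to its part properly colours the complete graph on $V_0\setminus S$ with $k-\ell+2$ colours. But $|V_0\setminus S|=r-\ell+2>k-\ell+2$ because $r>k$, a contradiction.

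The only point requiring care is this last step: it is exactly where the hypothesis $r>k$ is used, through the size comparison $r-\ell+2>k-\ell+2$.
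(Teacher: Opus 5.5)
Your proposal is correct and follows the paper's argument: a greedy embedding gives $\pi_\ell=0$; distinct edges meet in at most $\ell-1$ core vertices, so every ordering is $\ell$-vanishing; and the link of an $(\ell-2)$-subset $S$ of the core contains a clique on $V_0\setminus S$ of size $r-\ell+2>k-\ell+2$. The only difference is cosmetic: for the vanishing order you apply \cref{index} directly, whereas the paper observes that links of $(\ell-1)$-sets are matchings and invokes \cref{1vanishing}.
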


\begin{proof}
We first verify that $\pi_{\ell}(K_{r}^{(\ell)+})=0$. Indeed, $K_{r}^{(\ell)+}$ can be greedily embedded into any sufficiently large $k$-graph with positive minimum $\ell$-degree, since each new vertex can be chosen inside a large common neighborhood of an $\ell$-set.

Note that any two edge of $K_{r}^{(\ell)+}$ intersect on at most $\ell-1$ vertices, thus the link of any $(\ell-1)$-subset is a matching. By \cref{1vanishing}, it follows that every ordering of $V(K_{r}^{(\ell)+})$ is an $\ell$-vanishing order.
 
Now fix an $(\ell-2)$-subset $S$ of $V(K_{r}^{(\ell)})$, and let $R=V(K_{r}^{(\ell)})\setminus S$. Then any two vertices of $R$ are contained in a common edge of the link $L_{K_{r}^{(\ell)+}}(S)$. Hence this link contains a complete graph on $R$. Since $|R|=r-\ell+2>k-\ell+2$, the link of $S$ cannot be $(k-\ell+2)$-partite.
\end{proof}

By \cref{example1}, the $k$-graph $K_{r}^{(\ell+1)+}$ satisfies condition~\ref{cond1} and violates condition~\ref{cond2} in \cref{j-vanishing}.
By \cref{1vanishing}, we further know that $K_{r}^{(\ell)+}$ has no $(\ell-1)$-vanishing order when $\ell\geq2$ and $r>k$, thereby showing that \cref{general} is best possible, as promised.

For integers $r\geq k\geq 3$, the $k$-uniform \textit{tight cycle} of length $r$, denoted by $C_{r}^{(k)}$, is the $k$-graph with vertex set 
$\left\{v_1,v_2,\ldots, v_{r}\right\}$ 
and edge set 
$\left\{v_iv_{i+1}\ldots v_{i+k-1}:1\leq i\leq r\right\}$, 
where the indices are taken modulo $r$. 
The following result shows that when $r\geq 2k-1$ and $k\nmid r$, the tight cycle $C_{r}^{(k)}$ provides an example satisfying \ref{cond2} but violating \ref{cond1} in \cref{j-vanishing} as the nonexistence of a $(k-1)$-vanishing order implies the failure of condition~\ref{cond1} for all $2\le \ell \le k-2$.

\begin{observation}\label{example2}
For integers $r\geq 2k-1$ with $k\nmid r$, the $k$-uniform tight cycle $C_{r}^{(k)}$ satisfies the following.
\begin{itemize}
\item Let $2\le \ell \le k-2$. For every $(\ell-1)$-set $S$, it link is $(k-\ell+1)$-partite;
\item $C_{r}^{(k)}$ has no $(k-1)$-vanishing order.
\end{itemize}
\end{observation}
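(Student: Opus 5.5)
The two bullets are handled separately. The first is a direct computation of links; the second is a position-counting argument via \cref{index}, which is where the hypothesis $k\nmid r$ enters.

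\textbf{First bullet.} Fix an $(\ell-1)$-set $S$ with $2\le\ell\le k-2$. Its link $L(S)$ is a $(k-\ell+1)$-graph whose edges are $e\setminus S$ for the edges $e=\{v_i,\dots,v_{i+k-1}\}$ containing $S$. Since $r\ge 2k-1$, every edge containing $S$ lies within a cyclic window of length at most $2k-1\le r$ around $S$. So all vertices of $L(S)$ lie in a cyclic interval of at most $2k-1$ consecutive indices, and we may treat them as integers rather than residues mod $r$.

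We colour a vertex $v_j$ of $L(S)$ by $j \bmod (k-\ell+1)$, or more carefully by its rank among the indices not in $S$, taken mod $(k-\ell+1)$. Each link edge consists of the $k-\ell+1$ indices of a window of length $k$ that avoid $S$. Since $S$ lies inside that window, these are $k-\ell+1$ consecutive elements of the index set with $S$ removed. Consecutive elements of an ordered set receive distinct residues, so every link edge is rainbow and $L(S)$ is $(k-\ell+1)$-partite.

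The point to check is that ``consecutive after deleting $S$'' is well defined on one linear window. This holds because the union of all windows containing $S$ spans fewer than $r$ indices once $r\ge 2k-1$.

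\textbf{Second bullet.} Suppose $\sigma$ is a $(k-1)$-vanishing order. By \cref{index}, every $(k-1)$-set $T$ lies in edges that all see the omitted vertex in the same relative position. In the tight cycle, the consecutive $(k-1)$-set $T_i=\{v_{i+1},\dots,v_{i+k-1}\}$ lies in exactly the two edges $e_i=T_i\cup\{v_i\}$ and $e_{i+1}=T_i\cup\{v_{i+k}\}$; these are distinct since $r>k$. Hence $v_i$ and $v_{i+k}$ must occupy the same position, among $1,\dots,k$, within $e_i$ and $e_{i+1}$ respectively.

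Define $p(i)$ as the position of $v_i$ in $e_i$ under $\sigma$. The constraint relates $e_i$ and $e_{i+1}$, and we want to propagate it around the cycle. Since $e_{i+1}=(e_i\setminus\{v_i\})\cup\{v_{i+k}\}$ and the new vertex takes the old one's position, the relative order of the shared vertices is unchanged. So the map $\pi_i$ from $\{0,\dots,k-1\}$ to positions, $t\mapsto$ position of $v_{i+t}$ in $e_i$, satisfies
\[
\pi_{i+1}(t)=\pi_i(t+1)\ \text{ for } t\le k-2,\qquad \pi_{i+1}(k-1)=\pi_i(0).
\]
That is, $\pi_{i+1}$ is $\pi_i$ composed with a cyclic shift of order $k$.

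Going once around the cycle gives $\pi_{i+r}=\pi_i$, because $e_{i+r}=e_i$ with the same indexing. This forces the $r$-th power of a $k$-cycle to be the identity, so $k\mid r$, a contradiction.

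\textbf{Main obstacle.} The delicate step is the bookkeeping in the second bullet. We must verify that the invariance from \cref{index} really transfers the whole permutation and not only the position of one vertex. The justification is that replacing $v_i$ by $v_{i+k}$ in the same slot leaves the relative order of $T_i$ unchanged.

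We also need the indexing to close up consistently mod $r$, which is where $r>k$ is used to guarantee the two edges through $T_i$ are distinct.
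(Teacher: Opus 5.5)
Your proposal is correct. For the first bullet you follow the paper's argument: the link is a tight path in the linear order on the at most $2k-1$ consecutive indices around $S$, and you write it out in more detail. Only your rank-based colouring works; colouring by $j \bmod (k-\ell+1)$ already fails for $k=4$, $\ell=2$, where $\{v_{x-1},v_{x+1},v_{x+2}\}$ is a link edge of $\{v_x\}$.

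For the second bullet you take a genuinely different route, although the local input is the same as the paper's. The paper's path $P_{ij}$ lives in the link of $S_{ij}=T_i\setminus\{v_{i+j}\}$, with one edge coming from $e_i$ and one from $e_{i+1}$. Requiring that no $P_{ij}$, $1\le j\le k-1$, be monotone under $\sigma$ is exactly your condition that $v_i$ and $v_{i+k}$ occupy the same slot relative to $T_i$.

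The two proofs differ in how this constraint is propagated around the cycle. The paper goes through \cref{1vanishing} and turns the constraint into the orientation rule $i\to i+j \iff i+k\to i+k+j$. It then uses $d=\gcd(k,r)<k$ to build a directed cycle $1\to 1+d\to 1+2d\to\cdots$ in $\sigma$. You instead carry the whole position permutation $\pi_i$ along. Each step composes it with a fixed $k$-cycle, so the monodromy $\pi_{i+r}=\pi_i$ forces $k\mid r$.

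Your version is shorter and needs only \cref{index}, with no gcd bookkeeping. It also shows why $k\nmid r$ is exactly the obstruction: for $k\mid r$ the same bookkeeping is consistent and just records the $k$-partite structure of $C_r^{(k)}$. The paper's version stays within its link-based framework and uses only graph-level orientation data.

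One small correction to your closing remark: the hypothesis on $r$ is really needed so that $e_i$ has $k$ distinct vertices. That makes $\pi_i$ a bijection, which is what lets you cancel it in $\pi_{i+r}=\pi_i$. The distinctness of $e_i$ and $e_{i+1}$ plays no logical role.
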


\begin{proof}
If $r\geq 2k-1$, then the link of every $(\ell-1)$-set $S$ is a tight path, and hence $(k-\ell+1)$-partite.

Now suppose that $k\nmid r$, and assume for contradiction that $C_{r}^{(k)}$ admits a $(k-1)$-vanishing order $\sigma$. We apply \cref{1vanishing} to derive a contradiction. 

Denote $V(C_{r}^{(k)})=\mathbb{Z}/r\mathbb{Z}$ and 
$E(C_{r}^{(k)})=\big\{\{i,i+1,\dots,i+k-1\} : i\in\mathbb{Z}/r\mathbb{Z}\big\}$.
For any $i\in \mathbb{Z}/r\mathbb{Z}$ and $1\leq j\leq k-1$, 
denote by $S_{ij}$ the $(k-2)$-set $\{i+1,i+2,\dots,i+k-1\}\setminus\{i+j\}$, and let $P_{ij}$ be the path on 
$\{i, i+j, i+k\}$ with edges $\{i, i+j\}$ and $\{i+j, i+k\}$. 
Note that $P_{ij}$ is a subgraph of the link of $S_{ij}$. Hence, $\sigma|_{V(P_{ij})}$ is $1$-vanishing by \cref{1vanishing}. 

Let $G$ be the union of the links of all $(k-2)$-subsets. We orient every edge $\{i,j\}\in E(G)$ as $i\rightarrow j$ whenever $\sigma(i)<\sigma(j)$. Then the corresponding oriented $P_{ij}$ cannot be a directed path; otherwise $\sigma|_{V(P_{ij})}$ would fail to be $1$-vanishing. This implies that for any $1\leq j\leq k-1$,
\begin{equation}
i\rightarrow i+j \quad \text{if and only if} \quad i+k\rightarrow i+k+j,  \label{arrow}
\end{equation}
since neither $P_{ij}$ nor $P_{i+j,k-j}$ is a directed path.

Let $d=\gcd(k,r)<k$, and assume without loss of generality that $1\rightarrow d+1$. 
Since the set 
$\{xk+1\in\mathbb{Z}/r\mathbb{Z} : 0\leq x\leq r/d-1\}$ 
consists precisely of all elements of $\mathbb{Z}/r\mathbb{Z}$ congruent to $1\pmod d$, it follows from \eqref{arrow} that $i\rightarrow i+d$ for all $i\equiv 1\pmod d$. 
Therefore, the oriented graph $G$ contains a directed cycle 
$i_1\rightarrow i_2\rightarrow \dots \rightarrow i_t\rightarrow i_1$ 
with each $i_j\equiv 1\pmod d$, which yields the contradiction
$\sigma(i_1)<\sigma(i_2)<\dots<\sigma(i_t)<\sigma(i_1)$.
\end{proof}

\section{Locally $2$-vanishing hypergraphs: Proof of \cref{2-vanishing}} \label{lowerbound} 
As discussed in~\cref{sec:pf-idea}, we prove \cref{2-vanishing} by constructing a sequence of $k$-graphs with positive minimum $2$-degree that are \emph{locally} $2$-vanishing as follows. The case $k=3$ of~\cref{2-vanishing} is covered by the results in~\cite{ding20243,reiher2018hypergraphs}. Thus we work with uniformity $k\ge 4$ in this section.

\begin{theorem} \label{vanishing2}
For all positive integers $k\geq 4$ and $m\geq 1$, there exists $\gamma>0$ such that the following hold for infinitely many $k$-graphs $H$.
\stepcounter{propcounter}
\begin{enumerate}[label = \rm({\bfseries \Alph{propcounter}\arabic{enumi}})]          \item\label{D1} $\delta_2(H)\geq\gamma |V(H)|^{k-2}$;
\item\label{D2} Any subhypergraph of $H$ on at most $m$ vertices has a $2$-vanishing order.
\end{enumerate}
\end{theorem}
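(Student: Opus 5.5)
We follow the three-step strategy of \cref{sec:pf-idea}. Fix $k\ge4$ and $m$, and choose a small parameter $\varepsilon=\varepsilon(k,m)>0$.

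\emph{Step 1: the geometric building block.} Take vertex classes $V_0,V_1,\dots,V_{k-2}$ of size $N$. Place each vertex of $V_0$ at an independent uniformly random point $x_u$ on the unit circle (or torus), and give each vertex of $V_j$, $j\ge1$, an independent random label as well. Declare $\{u,u'\}\cup\{w_1,\dots,w_{k-2}\}$ with $u,u'\in V_0$, $w_j\in V_j$ an edge of type $(2,1^{k-2})$ when $x_{u'}-x_u$ lies in a fixed short arc (so the pair is \emph{oriented}), and the labels $w_j$ are compatible with this orientation.

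The key local claim is as follows. Since $\varepsilon$ is small relative to $m$, any $m$ vertices of $V_0$ occupy few arcs, and the orientation induces a cluster ordering. We order each $V_j$ block after $V_0$, and within $V_0$ we use the circular order cut at a gap of length $\gg\varepsilon$; such a gap exists among $m$ points once $\varepsilon<1/(2m)$. In this order every edge has its two $V_0$-vertices first, with the "tail" $u$ before $u'$, followed by $w_1,\dots,w_{k-2}$.

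We then check the condition of \cref{index}. For a pair $S$ inside an edge, the positions $P_{S,\sigma}(e)$ depend only on the classes of the two vertices and on which $V_0$-vertex is the tail. The tail is determined by the geometry, not by $e$. So each bounded subgraph of a block is $2$-vanishing.

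Concentration then shows that every pair $\{u,u'\}\subseteq V_0$, every pair $\{u,w\}$, and every pair $\{w_i,w_j\}$ with $i\ne j$ has codegree $\Omega(N^{k-2})$. Two exceptions remain. Pairs inside $V_0$ at distance outside the arc have codegree zero; this is repaired by using several arcs, or alternatively by a circular construction with all directions. The genuinely missing pair types are pairs inside the same $V_j$, $j\ge1$: there are $k-2$ of them, matching the paper's count.

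\emph{Step 2: design gluing.} Let $V(H)=U_1\cup\dots\cup U_r$ with $r$ large. Fix a $(k-1)$-uniform design $\mathcal D$ on $[r]$ in which every pair lies in a bounded positive number of blocks, for example a Steiner system via Keevash, or any balanced family with a bounded number of blocks per pair. For each block $B\in\mathcal D$ and each choice of which index of $B$ plays the role of $V_0$, place an independent building block on $\bigcup_{i\in B}U_i$. Then any two vertices, whether in the same part or in different parts, lie in some block in roles that give them positive codegree. In particular, a pair inside $U_i$ is served by a block in which $U_i$ plays $V_0$, so $\delta_2=\Omega(n^{k-2})$.

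\emph{Step 3: sparsification.} Here is the main obstacle: a small vertex set may see edges from different blocks, and the orderings forced by different blocks may conflict. To handle this, we give each vertex, for each block containing it, an independent random "activation" subset, and keep an edge only if all its vertices are activated for its block. We tune the activation probabilities to a constant so that pair codegrees stay linear-order positive by Chernoff and union bounds.

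Next we show, by a first-moment count, that with positive probability no bounded configuration survives in which some vertex's link mixes two blocks inconsistently. Such configurations are forced to have too many independent constraints relative to their vertex count, so they can be deleted. Concretely, we aim to show that every $m$-vertex subgraph decomposes so that the blocks interact only through a forest-like structure of shared vertices. Orderings of the pieces can then be concatenated with $\oplus$ and checked through \cref{1vanishing}, since the pair links remain $1$-vanishing.

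I expect this consistency argument to be hardest. If direct deletion fails, the first fallback is to strengthen Step 1 so that vertex positions are shared globally (each vertex carries one point on the circle used by every block). This makes the orders of different blocks automatically compatible, leaving only the class-order issue for the design to resolve.
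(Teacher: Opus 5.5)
Your outline follows the paper's three-step architecture, but the two steps that carry the real content do not work as you describe them.

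In Step 1, the justification ``the tail is determined by the geometry, not by $e$'' is wrong. Being the tail is a property of the pair $\{u,u'\}$, not of the vertex $u$. With a fixed short arc, a typical $u\in V_0$ has partners just clockwise and just counterclockwise of it. So for a fixed $w\in V_j$ there can be edges $\{u,u'\}\cup W$ and $\{u'',u\}\cup W'$ through $w$ in which $u$ is first among the $V_0$-vertices in one and second in the other. Then $P_{\{u,w\},\sigma}$ takes two values and \cref{index} fails. Your ``compatible labels'' would have to exclude this, and you never say how. What is needed is exactly the content of \cref{random}: for every $B$-vertex $w$, the graph on $V_0$ of pairs lying in edges through $w$ must have the whole neighbourhood of each vertex inside one arc of length $2\pi/r$ avoiding that vertex. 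Then, after cutting at a gap, every vertex is always first or always second. The paper obtains this by giving each $B$-vertex an independent copy of $\mathcal{G}(n,r)$: each vertex picks one of $r$ arcs partitioning the rest of the circle, and adjacency requires mutual selection. Because the arcs range over all directions, the same mechanism gives every pair inside $A$ codegree $\Omega(n^{k-2})$. So the far-pair defect you noticed is not a side issue to be patched later.

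Step 3, which you flag as hardest, cannot be done by first-moment deletion. $H$ must have $\Theta(|V(H)|^k)$ edges. With constant activation probabilities, every bounded configuration of the glued hypergraph survives with constant probability. Hence mixed-link configurations on $t$ vertices have expected count of order $|V(H)|^{t}$: they are not rare, and deleting them destroys $\delta_2$. The ``forest-like'' decomposition is likewise unsupported, and it is also unnecessary.

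The paper makes cross-block interference impossible rather than rare, as follows.
\begin{itemize}
\item It places one copy $H_J$ per design block $J$, with the $A$-role given by a surjection $\varphi$ with $\varphi(J)\in J$, obtained from Hall's theorem.
\item All copies with the same $A$-part share one identification with the circle, which is your fallback.
\item A random \emph{function} $\phi$ sends each pair inside $V_i$ to a single $J$ with $\varphi(J)=i$, and an edge of $H_J$ is kept only if its $A$-pair is sent to $J$.
\end{itemize}
Together with $|J\cap J'|\le 1$, this makes the per-block links of every vertex vertex-disjoint (\cref{claim:packing}). Under one global cluster ordering with the shared cyclic orders, each vertex link is then a disjoint union of $1$-vanishing pieces by \cref{21}, and \cref{1vanishing} gives \ref{D2}. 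Property \ref{D1} survives by Chernoff, since each $A$-pair is sent to a given admissible $J$ with probability $1/|E_{\varphi(J)}|$.

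Your Step 2 actually obstructs this separation. With an independent copy for every choice of the $V_0$-index in a block, a cross-part pair lies in $k-1$ copies on the same vertex set. A vertex $w\in U_s$ can then sit in another vertex's link as one of two $A$-vertices in one copy but alone in its cluster in another, so its position can differ between copies. Separating only within-part pairs, as the paper does, would therefore not suffice.
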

\subsection{A geometric random graph}

In this subsection, we first recall a construction of an $n$-vertex random graph, which has been used in the authors' previous article \cite{ding20243} to study the codegree Tur\'{a}n density of $3$-graphs. 
Let $n$, $q$, $r$ be positive integers such that $n=qr+1$. 
The $n$-vertex random graph, denoted by $\mathcal{G}(n,r)$, is defined with vertex set 
$A=\{a_0,a_1,\ldots,a_{n-1}\}$ as follows. 
For every $0\leq i\leq n-1$ and $0\leq j\leq r-1$, let 
\[
S_{ij}=\left\{a_{i+t} : jq+1\leq t\leq (j+1)q\right\},
\] 
where the indices are taken modulo $n$. 
Let $X_0,X_1,\ldots,X_{n-1}$ be $n$ independent random variables, each of which takes a value from $\{0,1,\ldots, r-1\}$ uniformly at random. 
Then, for every $0\leq i <j \leq n-1$, $a_{i}a_{j}$ is an edge of $\mathcal{G}(n,r)$ if and only if $a_i\in S_{jX_{j}}$ and $a_j\in S_{iX_i}$. 
Note that $a_ia_j$ forms an edge of $\mathcal{G}(n,r)$ with probability $\frac{1}{r^2}$. 

As pointed out in \cite{ding20243}, a geometric way to understand $\mathcal{G}(n,r)$ is to put all the vertices of $\mathcal{G}(n,r)$ on the unit circle of the complex plane with $a_{\ell}$ corresponding to the point $e^{2\pi\ell i/n}$, and to view $S_{\ell,j}$ as an arc of length $2\pi/r$ containing all the vertices of $S_{\ell,j}$. 
Given a subset $A'=\{a_{i_1},a_{i_2},\ldots,a_{i_m}\}$ of $A$ with $0\leq i_1<i_2<\cdots<i_m\leq n-1$, we call $\sigma_c$ a \textit{cyclic ordering} of $A'$ if for some $1\leq s\leq m$, 
\[
\sigma_c(a_{i_{s+j}})=j \quad \text{for every } 1\leq j\leq m,
\]
where the indices are taken modulo $m$. 

The following lemma shows that there is a universal cyclic ordering of a subset of $A$ such that it is $1$-vanishing for any subgraph of $\mathcal{G}(n,r)$ induced on the subset.

\begin{lemma}\label{random}
 For any integer $m\leq r/2$ and any $A'\subseteq A$ of order $m$, there exists a cyclic ordering of $A'$ which is a $1$-vanishing order of any subgraph of $\mathcal{G}(n,r)$ induced on $A'$.     
\end{lemma}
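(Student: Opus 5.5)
The plan is to exploit the geometric picture of $\mathcal{G}(n,r)$ on the unit circle. Cut the circle at a large gap between consecutive points of $A'$, and read off the cyclic ordering starting just after the cut. Recall that for a graph ($k=2$) a $1$-vanishing order $\sigma$ is the same as a $2$-colouring of the vertices such that every vertex is either the $\sigma$-smaller endpoint of all its edges or the $\sigma$-larger endpoint of all its edges. So it suffices to find a cyclic ordering $\sigma_c$ of $A'$ such that, for every $a\in A'$, all neighbours of $a$ inside $A'$ lie on the same side of $a$ in $\sigma_c$. This has to hold for every outcome of $X_0,\dots,X_{n-1}$ simultaneously. Since any subgraph of $\mathcal{G}(n,r)$ induced on $A'$ is a subgraph of $\mathcal{G}(n,r)[A']$, and the $1$-vanishing property passes to subgraphs, it is enough to treat $\mathcal{G}(n,r)[A']$ itself.

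First step: choose the cut. The $m$ points of $A'$ split the cycle $\mathbb{Z}/n\mathbb{Z}$ into $m$ gaps whose lengths sum to $n$. Hence some gap, between cyclically consecutive points $a_{i_s}$ and $a_{i_{s+1}}$, has length at least $n/m\ge 2n/r=2(qr+1)/r>2q$. Let $\sigma_c$ be the cyclic ordering starting at $a_{i_{s+1}}$. For each $a=a_i\in A'$, walking clockwise from $a$ we first meet the points of $A'$ that are $\sigma_c$-later than $a$. We then cross the big gap, and after it we meet the points that are $\sigma_c$-earlier than $a$.

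Second step, which is the key geometric claim: the arc $S_{iX_i}$ consists of $q$ consecutive positions at clockwise offsets $X_iq+1,\dots,(X_i+1)q$ from $a_i$. It is therefore a clockwise interval of length $q$ that does not contain $a_i$. Suppose $S_{iX_i}$ contained both a $\sigma_c$-later and a $\sigma_c$-earlier point of $A'$. Then, being an interval traversed clockwise from beyond $a_i$, it would contain the entire big gap between them, which has length $>2q>q$. This is impossible. So the points of $A'\cap S_{iX_i}$ are either all after $a_i$ or all before $a_i$ in $\sigma_c$. Colour $a_i$ by $\{1\}$ in the first case and by $\{2\}$ in the second case (arbitrarily if the arc meets no point of $A'$).

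Third step: verify the colouring. If $a_ia_j$ is an edge with $\sigma_c(a_i)<\sigma_c(a_j)$, then $a_j\in S_{iX_i}$ is later than $a_i$, so $a_i$ has colour $\{1\}$. Likewise $a_i\in S_{jX_j}$ is earlier than $a_j$, so $a_j$ has colour $\{2\}$. Thus $\sigma_c$ is a $1$-vanishing order of the induced graph. The only delicate point is the second step: one must check carefully that a clockwise interval excluding $a_i$ cannot wrap around past the cut without swallowing the whole gap. The quantitative inequality gap $>q$ is exactly where the hypothesis $m\le r/2$ enters.
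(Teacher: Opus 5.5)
Your proposal is correct and follows essentially the same route as the paper: cut the circle at a gap of length at least $n/m$, take the cyclic ordering starting just after that gap, and use that the neighbours of each $a_i$ lie in the short arc $S_{iX_i}$, which cannot straddle the gap. Your discrete version is slightly more careful than the paper's. You explicitly use $a_i\notin S_{iX_i}$, and you check that the smaller endpoint of every edge gets colour $\{1\}$ and the larger gets $\{2\}$, rather than only appealing to bipartiteness.
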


\begin{proof}
Let $A'=\{a_{i_1},a_{i_2},\ldots,a_{i_m}\}$ with $0\leq i_1<i_2<\cdots<i_m\leq n-1$. 
By averaging, there exists $1\leq s\leq m$ such that the arc $\mathcal{A}$ from $a_{i_s}$ to $a_{i_{s+1}}$ (in the clockwise direction) has length at least $2\pi/m\geq 4\pi/r$, and contains no other vertices of $A'$. 
Let $\sigma_c$ be the cyclic ordering of $A'$ with $\sigma_c(a_{i_{s+j}})=j$ for $1\leq j\leq m$. 
We show that $\sigma_c$ is the desired ordering.

By the construction of $\mathcal{G}(n,r)$, for any $a_{i_j}\in A'$, all neighbors of $a_{i_j}$ lie on an arc of length $2\pi/r$. 
Since the gap $\mathcal{A}$ has length at least $4\pi/r$, this arc of neighbors cannot intersect both sides of $\mathcal{A}$. 
Therefore, with respect to $\sigma_c$, all neighbors of $a_{i_j}$ lie either entirely before $a_{i_j}$ or entirely after $a_{i_j}$.

Using this observation, we define a coloring $\varphi$ of $A'$ as follows. For $a_{i_j}\in A'$, let 
\[
\varphi(a_{i_j})=
\begin{cases}
1, & \text{if } \sigma_c(a_{i_j})<\sigma_c(a_{i_\ell}) \text{ for some } a_{i_j}a_{i_\ell}\in E(\mathcal{G}(n,r)),\\
2, & \text{otherwise}.
\end{cases}
\]
As all of $a_{i_j}$ neighbors in the induced subgraph lie entirely on one side of $a_{i_j}$ in the ordering $\sigma_c$, $\varphi$ is a proper $2$-coloring of the induced subgraph. 
Since a graph is $1$-vanishing if and only if it is bipartite, this shows that $\sigma_c$ is a $1$-vanishing order.
\end{proof}


\subsection{$(2,1^{k-2})$-type $k$-graphs}

For a $k$-graph $F$, if there is a partition $\mathcal{P}=A\cup B_1\cup\dots\cup B_{k-2}$ of the vertex set $V(F)$ such that every edge in $E(F)$ has exactly two vertices in $A$ and one vertex in $B_i$ for each $1\leq i\leq k-2$, then we say that $F$ is of \textit{$(2,1^{k-2})$-type}, and call $\mathcal{P}$ a \textit{$(2,1^{k-2})$-partition} of $F$. 
As a building block in our final construction, we first give a $(2,1^{k-2})$-type $k$-graph such that every subhypergraph of bounded order admits a $2$-vanishing order and every pair of vertices that lie in an edge has large codegree.

Let $n$, $q$, $r$ be positive integers such that $n=qr+1$, and let $A=\{a_0,a_1,\ldots,a_{n-1}\}$, $B_i=\{b_{i,1},b_{i,2},\ldots,b_{i,n}\}$ for $1\leq i\leq k-2$. We define $\mathcal{H}(n,r)$ to be the $(2,1^{k-2})$-type $k$-graph with the $(2,1^{k-2})$-partition $A\cup B_1\cup\dots\cup B_{k-2}$ and with the edge set 
\[
\left\{a_ia_jb_{1,i_1}\dots b_{k-2,i_{k-2}} : a_ia_j\in E(G_{1,i_1})\cap\dots \cap E(G_{{k-2,i_{k-2}}})\right\},
\]
where $G_{i,j}$'s are i.i.d. copies of $\mathcal{G}(n,r)$ on the vertex set $A$ for all $1 \leq i\leq k-2$ and $j\in [n]$.
In other words, each vertex $b_{i,j}$ is assigned an independent random graph $G_{i,j}$ on $A$ and when we pick one vertex from each $B_i$, the link graph of these $k-2$ vertices is the intersection of their assigned random graphs.

Given an ordering of a set and a partition of the set, we say that the ordering is a \textit{cluster ordering} respecting this partition if it can be written as the sum of some orderings of these parts. 

\begin{lemma}\label{21}
For any integer $m\leq r/2$ and any $A'\subseteq A$ of order $m$, let $\sigma$ be a cluster ordering  respecting the partition 
$A'\cup B_1\cup\dots\cup B_{k-2}$ with $\sigma|_{A'}$ being a cyclic ordering as in \cref{random}. Then $\sigma$ is a $2$-vanishing order of any subhypergraph of $\mathcal{H}(n,r)$ induced on $A'\cup B_1\cup\dots\cup B_{k-2}$. \end{lemma}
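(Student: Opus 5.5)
We will verify the criterion of \cref{1vanishing} with $\ell=2$. Let $F$ be a subhypergraph of $\mathcal{H}(n,r)$ induced on $A'\cup B_1\cup\dots\cup B_{k-2}$. We show that for every vertex $x$, the restriction of $\sigma$ to $V(L_F(x))$ is a $1$-vanishing order of the $(k-1)$-graph $L_F(x)$. A $1$-vanishing order is the same as an ordering together with a map $\varphi$ from vertices to $[k-1]$ such that every edge, listed in $\sigma$-increasing order, has its $i$-th vertex coloured $i$. Equivalently, every vertex occupies the same position in all edges of the link that contain it. Write the cluster ordering as $\sigma=\sigma_{A'}\oplus\sigma_{B_1}\oplus\cdots\oplus\sigma_{B_{k-2}}$, so that every vertex of $A'$ precedes every vertex of $B_1$, and so on.

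\emph{Case 1: $x\in B_i$.} Every edge of $L_F(x)$ has the form $a\,a'\,b_1\cdots b_{i-1}b_{i+1}\cdots b_{k-2}$ with $a,a'\in A'$ and $b_j\in B_j$. Because $\sigma$ respects the clusters, a vertex $b_j$ always sits in position $j+2$ if $j<i$, and in position $j+1$ if $j>i$, whatever the edge. So it remains to colour the two $A'$-vertices consistently, with position $1$ or $2$. Their pair $aa'$ is an edge of the intersection graph $G_{1,i_1}\cap\dots\cap G_{k-2,i_{k-2}}$, hence an edge of the single graph $G_{i,x}$ attached to $x$. By \cref{random} applied to $G_{i,x}$, the cyclic order $\sigma|_{A'}$ is $1$-vanishing for the subgraph of $G_{i,x}$ induced on $A'$. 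Thus each $a\in A'$ is consistently the smaller or the larger endpoint of every $G_{i,x}$-edge inside $A'$ containing it. Colour $a$ by $1$ or $2$ accordingly.

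The point to check is that \cref{random} gives a single cyclic ordering that works for \emph{every} copy of $\mathcal{G}(n,r)$ at once. In its proof, the ordering is chosen using only the large gap between consecutive vertices of $A'$, and each neighbourhood lies in an arc of length $2\pi/r$. Both facts are independent of the random variables $X_j$. So the hypothesis that $\sigma|_{A'}$ is ``a cyclic ordering as in \cref{random}'' applies uniformly to all the $G_{i,j}$, and in particular to $G_{i,x}$.

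\emph{Case 2: $x\in A'$.} Every edge of $L_F(x)$ has the form $a\,b_1\cdots b_{k-2}$ with $a\in A'$ and $b_j\in B_j$. By the cluster structure, $a$ is always in position $1$ and $b_j$ is always in position $j+1$. So $L_F(x)$ is $(k-1)$-partite, and the restriction of $\sigma$ to its vertex set is trivially $1$-vanishing.

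Combining the two cases with \cref{1vanishing} shows that $\sigma$ is a $2$-vanishing order of $F$, and hence of every subhypergraph of it. The main obstacle is Case 1, specifically the uniformity of the cyclic ordering over all the random graphs, which we handle by reading off the deterministic content of the proof of \cref{random} as described above.
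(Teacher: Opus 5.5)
Your proposal is correct and follows the paper's route. Both arguments use \cref{1vanishing} with $\ell=2$ to reduce to vertex links, colour the link of $x\in A'$ by its trivial $(k-1)$-partition, and for $x=b_{i,j}\in B_i$ place each $B_t$ in a fixed position while colouring $A'$ via \cref{random} applied to $G_{i,j}$. Your remark that the cyclic ordering of \cref{random} depends only on the deterministic gap, and so works simultaneously for every copy $G_{i,j}$, makes explicit a point the paper leaves implicit.
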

\begin{proof}
Let $F$ be a subhypergraph of $\mathcal{H}(n,r)$ induced on $A'\cup B_1\cup\dots\cup B_{k-2}$. Without loss of generality, we may assume $\sigma=\sigma_c\oplus\left(\sum_{i=1}^{k-2}\sigma_i\right)$,
where $\sigma_c$ is a cyclic ordering of $A'$ as in \cref{random} and $\sigma_i$ is an arbitrary ordering of $B_i$ for $1\leq i\leq k-2$. 
Now we aim to show that the link graph of each vertex in $F$ is $1$-vanishing under $\sigma$, then by \cref{1vanishing} (with $\ell=2$), $\sigma$ is a $2$-vanishing order of $F$.   

For each $a\in A'$, since $L_F(a)$ is a $(k-1)$-partite $(k-1)$-graph with the partition $A'\cup B_1\cup\dots \cup B_{k-2}$, we can simply color the vertices in $A'$ with $1$ and color the vertices in $B_i$ with $i+1$ for $1\leq i\leq k-2$ to get a $1$-vanishing coloring of $L_F(a)$ under $\sigma$. 

For every $1\leq t\leq k-2$ and $b\in B_{t}$, let $G_b=\{aa' : a,a'\in A'\text{~and~}d_F(aa'b)\geq 1\}$. So $G_b$ is a subgraph of $\mathcal{G}(n,r)$. By \cref{random}, there is a $1$-vanishing coloring $\varphi: A'\rightarrow \{1,2\}$ of $G_b$. Let  
\[ \varphi_b (x)= 
\left\{
\begin{aligned}
&\varphi(x), \qquad\qquad  \text{if} \ x\in A',  \\
&i+2, \qquad\qquad   \text{if} \ x\in  B_i,\ 1\leq i<t,\\ 
&i+1, \qquad\qquad   \text{if} \ x\in  B_i,\ t<i\leq k-2.
\end{aligned}
\right. 
\]
One can easily check that $\varphi_b$ is a $1$-vanishing coloring of $L_F(b)$ under $\sigma$. 
\end{proof}

Next we show that with high probability, every pair of vertices of $\mathcal{H}(n,r)$ that lie in an edge has large codegree.

\begin{lemma}\label{good}
For all integers $k\geq 4$ and $r\geq 2$, there exists $\varepsilon>0$ such that for infinitely many integers $n$, the $(2,1^{k-2})$-type $k$-graph $\mathcal{H}(n,r)$ satisfies the following with high probability.
\stepcounter{propcounter}
\begin{enumerate}
[label = \rm({\bfseries \Alph{propcounter}\arabic{enumi}})]          
\item\label{C1} $d(aa')\geq\varepsilon n^{k-2}$ for any $a,a'\in A$;
\item\label{C2} $d(ab)\geq\varepsilon n^{k-2}$ for any $a\in A$, $b\in B_i$ and $1\leq i\leq k-2$;
\item\label{C3}  $d(bb')\geq\varepsilon n^{k-2}$ for any $b\in B_i$, $b'\in B_j$ and $1\leq i<j\leq k-2$.
\end{enumerate}
\end{lemma}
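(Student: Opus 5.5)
The plan is to compute each codegree as a product of independent counts, one per class $B_i$, and control each count with Chernoff bounds or bounded differences. A union bound over the polynomially many pairs then finishes the proof. We take $n\equiv 1 \pmod r$, i.e.\ $n=qr+1$, which gives infinitely many admissible $n$. We first record a basic fact about $\mathcal{G}(n,r)$. For fixed $i$, the arcs $S_{i0},\dots,S_{i,r-1}$ partition $A\setminus\{a_i\}$, since $n-1=qr$. Hence, for any vertex $a'\neq a_i$, the event $a_i\in S_{a',X_{a'}}$ has probability exactly $1/r$, and these events are independent over the different $a'$ given $X_{a_i}$. Each pair $aa'$ is an edge with probability $1/r^2$, and distinct copies $G_{i,j}$ are independent.

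\emph{Step 1 (\ref{C1}).} Fix $a,a'\in A$ and $i\in[k-2]$. The number $N_i(aa')$ of $b\in B_i$ with $aa'\in E(G_b)$ is distributed as $\mathrm{Bin}(n,1/r^2)$. By Chernoff, $N_i(aa')\ge n/(2r^2)$ with probability $1-e^{-\Omega(n)}$. A union bound over the $O(kn^2)$ choices shows that whp this holds for all $a,a',i$ simultaneously. Since the link of $aa'$ is exactly the product of these sets across the classes $B_1,\dots,B_{k-2}$, we get $d(aa')=\prod_i N_i(aa')\ge (n/2r^2)^{k-2}$. We keep this event as a standing assumption for the later steps.

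\emph{Step 2 (\ref{C2}).} Fix $a\in A$ and $b\in B_i$. We first show that $\deg_{G_b}(a)\ge q/(2r)$ whp. Given $X_a$, the neighbours of $a$ are the $a'\in S_{a,X_a}$ (there are $q$ of them) with $a\in S_{a',X_{a'}}$. These are independent events of probability $1/r$, so the degree is distributed as $\mathrm{Bin}(q,1/r)$ and Chernoff applies. A union bound over the $O(kn^2)$ pairs $(a,b)$ gives the bound for all of them simultaneously. Every edge containing $a$ and $b$ is obtained by choosing $a'\in N_{G_b}(a)$ together with one vertex in each $B_j$ ($j\neq i$) whose graph contains $aa'$. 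Using Step 1 for the latter choices, we obtain $d(ab)\ge \frac{q}{2r}\,(n/2r^2)^{k-3}=\Omega(n^{k-2})$.

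\emph{Step 3 (\ref{C3}).} Fix $b\in B_i$ and $b'\in B_j$ with $i\ne j$. Then $d(bb')=\sum_{aa'\in E(G_b\cap G_{b'})}\prod_{t\ne i,j}N_t(aa')$. By Step 1 this is at least $|E(G_b\cap G_{b'})|\cdot(n/2r^2)^{k-4}$; when $k=4$ the product is empty, which is harmless. It therefore suffices to show $|E(G_b\cap G_{b'})|\ge n^2/(8r^4)$ whp. The expectation is $\binom n2 r^{-4}$. We expect this concentration to be the main obstacle, because the edges of a single copy are correlated through the shared variables $X_\cdot$.

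We handle it with McDiarmid's bounded-differences inequality. The count is a function of the $2n$ independent variables $\{X^{(b)}_v\},\{X^{(b')}_v\}$. Changing one variable $X_v$ alters only edges at $v$, so the count changes by at most $n$. Taking deviation $t=n^2/(8r^4)$, the failure probability is at most $\exp(-2t^2/(2n\cdot n^2))=e^{-\Omega(n)}$. This survives a union bound over the $O(k^2n^2)$ pairs $(b,b')$.

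Combining the three steps, we may take $\varepsilon=\min\{(2r^2)^{-(k-2)},\,\tfrac{1}{2r^2}(2r^2)^{-(k-3)},\,\tfrac18 r^{-4}(2r^2)^{-(k-4)}\}/2$. The final halving absorbs the difference between $q/(2r)$ and $n/(2r^2)$.
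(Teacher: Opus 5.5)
Your proof is correct. Steps 1 and 2 are essentially the paper's arguments for \ref{C1} and \ref{C2}; the paper's set $S'$ is exactly your $N_{G_b}(a)$, distributed as $\mathrm{Bin}(q,1/r)$.

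For \ref{C3} you take a genuinely different route. You apply McDiarmid's inequality directly to $Z=|E(G_b\cap G_{b'})|$, viewed as a function of the $2n$ independent arc choices. The Lipschitz constant is at most $q\le n$, and $\mathbb{E}Z=\binom n2 r^{-4}\ge n^2/(4r^4)$. This gives $\mathbb{P}(Z<n^2/(8r^4))\le e^{-n/(64r^8)}$.

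The paper instead avoids the dependence between edges altogether. It takes two short arcs $S^{\pm}$ of $n/(3r)$ vertices on either side of $a_0$. It then lets $W^{+}$ and $W^{-}$ be the vertices whose chosen arc, in both copies, points toward the other side. The cyclic geometry forces every pair in $W^+\times W^-$ to be an edge of $G_b\cap G_{b'}$. Since $|W^{\pm}|\sim\mathrm{Bin}(n/(3r),1/r^2)$ are genuinely binomial, plain Chernoff suffices.

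What each buys:
\begin{itemize}
\item Your argument is shorter and does not use the arc structure of $\mathcal{G}(n,r)$. It also controls the whole intersection at its true order $n^2/r^4$ rather than $n^2/(36r^6)$, giving $\varepsilon$ of order $r^{-2(k-2)}$ instead of the paper's $(6r^3)^{-(k-2)}$.
\item The paper's argument needs only Chernoff bounds for independent binomials.
\end{itemize}
Both then combine with the uniform bound on $N_t(aa')$ from Step 1 in the same way.
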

\begin{proof}
Set $\varepsilon=\frac{1}{(6r^3)^{k-2}} $, and let $n=qr+1$ with $n\gg r$.

\medskip

\noindent{\bf Verifying~\ref{C1}.} 
For any $a_i,a_j\in A$, let 
$$Y_s=|\{t\in[n]:a_ia_j\in E(G_{s,t})\}|,$$ 
which is the number of random graphs assigned to the vertices in $B_s$ that include $a_ia_j$ as an edge. By the definition of $\mathcal{H}(n,r)$, 
$\mathbb{P}\left(a_ia_j\in E(G_{s,t})\right)=\frac{1}{r^2}$,
$Y_s\sim {\rm Bin}(n,\frac{1}{r^2})$ and $\mathbb{E}[Y_s]=n/r^2$. Therefore, it follows from Chernoff's inequality that $\mathbb{P}\left(Y_s\leq \frac{n}{2r^2}\right)\leq e^{-\frac{n}{8r^2}}.$ Noting that 
$d(a_ia_j)=Y_1\dots Y_{k-2}$,
we obtain  $\mathbb{P}\left(d(a_ia_j)\leq \frac{n^{k-2}}{(2r^2)^{k-2}}\right)\leq (k-2)e^{-\frac{n}{8r^2}}.$ Taking a union bound over all pairs of $A$, the probability that $d(a_ia_j)\geq  \frac{n^{k-2}}{(2r^2)^{k-2}}$ for every pair of vertices $a_i,a_j$ is at least $1-(k-2)\binom{n}{2}e^{-\frac{n}{8r^2}}$.

\medskip

\noindent{\bf Verifying~\ref{C2}.} We first estimate the probability that $d(a_0b_{1,1})<\varepsilon n^{k-2}$.  Let $S$ be the random set chosen for $a_0$ in $G_{1,1}$, so $|S|=q=(n-1)/r$, and let $S'$ be the subset of $S$ consisting of all those $a_j$ such that $a_0$ is in the random set chosen for $a_j$ in $G_{1,1}$.  Then $|S'|\sim {\rm Bin}(q,1/r)$, $\mathbb E(|S'|)=(n-1)/r^2$, and for  sufficiently large $n$, Chernoff's inequality gives
$\mathbb{P}\left(|S'|\leq \frac{n}{2r^2}\right)\leq e^{-\frac{n}{36r^2}}.$

Now fix a vertex $a_s\in S'$, and let
$B'_i=\{b_{i,j}\in B_i:a_sa_0\in E(G_{i,j})\}$
for $2\leq i\leq k-2$.  Then $|B'_i|\sim {\rm Bin}(n,1/r^2)$ and
$\mathbb{P}\left(|B'_i|\leq \frac{n}{2r^2}\right)\leq e^{-\frac{n}{8r^2}}.$
Since $d(a_0a_sb_{1,1})=|B'_2|\cdots |B'_{k-2}|$, we have
$$\mathbb{P}\left(d(a_0a_sb_{1,1})\leq \frac{n^{k-3}}{(2r^2)^{k-3}}\right)
\leq (k-3)e^{-\frac{n}{8r^2}}.$$
Therefore,
\begin{align*}
\mathbb{P}\left(d(a_0b_{1,1})\leq
\frac{n^{k-3}}{(2r^2)^{k-3}}\frac{n}{2r^2}\right)
&\leq \mathbb{P}\left(|S'|\leq \frac{n}{2r^2}\right)
+\mathbb{P}\left(d(a_0a_sb_{1,1})\leq \frac{n^{k-3}}{(2r^2)^{k-3}}
\text{ for some }a_s\in S'\right)\\
&\leq e^{-\frac{n}{36r^2}}+(k-3)n e^{-\frac{n}{8r^2}}
\leq e^{-c_2 n}
\end{align*}
for some constant $c_2=c_2(k,r)>0$ and all sufficiently large $n$.  By symmetry, for every
$b_{i,j}\in B_i$ and $a_\ell\in A$,
$\mathbb{P}\left(d(b_{i,j}a_\ell)\leq \frac{n^{k-2}}{(2r^2)^{k-2}}\right)
\le e^{-c_2 n}.$
Thus \ref{C2} holds for all $(k-2)n^2$ such pairs with probability at least
$1-(k-2)n^2e^{-c_2n}.$

\medskip

\noindent{\bf Verifying~\ref{C3}.} 
By symmetry, it suffices to estimate the probability that $d(b_{1,1}b_{2,1})<\varepsilon n^{k-2}$. To eliminate the dependence between the edges in the random graph and use a tail inequality, we consider a bipartite subgraph. Let $S^{-}=\{a_{-1},\dots,a_{-n/3r}\}$ and $S^{+}=\{a_{1},\dots,a_{n/3r}\}$. Denote by $G'$ the induced bipartite subgraph of $G_{1,1}\cap G_{2,1}$ on $S^{-}\cup S^{+}$. 
Let 
\[
W^{+}=\{a_j\in S^{+}:\text{the random set chosen for } a_j \text{ in } G_{1,1} \text{ and } G_{2,1} \text{ are both } S_{j,r-1}\}
\]
and
\[
W^{-}=\{a_j\in S^{-}:\text{the random set chosen for } a_j \text{ in } G_{1,1} \text{ and } G_{2,1} \text{ are both } S_{j,0}\}.
\]
Then $|W^{+}|, |W^{-}|\sim {\rm Bin}(\frac{n}{3r},\frac{1}{r^2})$, and every pair in
$W^+\times W^-$ forms an edge of $G'$.  Thus
$|E(G')|\ge |W^+||W^-|.$
Therefore,
\[
\mathbb{P}\left( |E(G')|\leq  \frac{n^2}{36r^6} \right)
\leq \mathbb{P}\left( |W^{+}|\leq  \frac{n}{6r^3} \text{~or~} |W^{-}|\leq  \frac{n}{6r^3}\right)
\leq 2e^{-\frac{n}{24r^3}}.
\]
For each possible edge $e$ between $S^-$ and $S^+$ and each $3\leq i\leq k-2$, let
\[
W^i_e=\{b_{i,j}:e\in E(G_{i,j})\}.
\]
Then $|W^i_e|\sim {\rm Bin}(n,1/r^2)$ and
$\mathbb{P}\left( |W^i_e|\leq  \frac{n}{2r^2} \right)\leq e^{-\frac{n}{8r^2}}.$
If $|E(G')|>n^2/(36r^6)$ and $|W^i_e|>n/(2r^2)$ for every $e\in G'$ and every
$3\le i\le k-2$, then
\[
d(b_{1,1}b_{2,1})
\ge |E(G')|\left(\frac{n}{2r^2}\right)^{k-4}
\ge \frac{n^{k-2}}{(6r^3)^{k-2}}.
\]
Hence, using the union bound over all at most $n^2$ possible choices of $e$,
\[
\mathbb{P}\left( d(b_{1,1}b_{2,1})\leq  \frac{n^{k-2}}{(6r^3)^{k-2}} \right)
\leq 2e^{-\frac{n}{24r^3}}+(k-4)n^2e^{-\frac{n}{8r^2}}
\leq e^{-\frac{n}{25r^3}}
\]
for sufficiently large $n$.
By symmetry, we know that 
$\mathbb{P}\left(d(b_{i_1,j_1}b_{i_2,j_2})\leq \frac{n^{k-2}}{(6r^3)^{k-2}}\right)\leq  e^{-\frac{n}{25r^3}}$
for every $b_{i_1,j_1}\in B_{i_1}$ and $b_{i_2,j_2}\in B_{i_2}$ with $i_1\neq i_2$. 
Taking a union bound over all $\binom{k-2}{2} n^{2}$ such pairs,  \ref{C3} holds with probability at least 
$1-\binom{k-2}{2} n^{2} e^{-\frac{n}{25r^3}}$.
\end{proof}

\subsection{The final hypergraph: Proof of~\cref{vanishing2}}

To complete our final construction, we glue several $(2,1^{k-2})$-type $k$-graphs in a suitable way to get a $k$-graph which has positive minimum $2$-degree while maintaining the locally $2$-vanishing property. For this purpose, we utilize the following classic result in combinatorial design combined with a random sparsification approach.

\begin{theorem}[\cite{wilson1975existence}]\label{design}
For every integer $k\geq 3$, there is $N>0$ such that for every $n>N$ with $n -1\equiv 0 \pmod{k-1}$ and $(n -1)n\equiv 0  \pmod{(k-1)k}$, there is a $k$-graph on $[n]$ such that every pair of vertices is contained in exactly one edge.
\end{theorem}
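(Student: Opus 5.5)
Since this is the classical existence theorem of Wilson, in the paper we would simply invoke~\cite{wilson1975existence}. If a self-contained argument were wanted, we would recast the statement as a graph decomposition problem. A $k$-graph on $[n]$ in which every pair lies in exactly one edge is the same as a decomposition of the edge set of $K_n$ into edge-disjoint copies of $K_k$. The two congruences are exactly the divisibility conditions for such a decomposition. The condition $n-1\equiv 0\pmod{k-1}$ says every vertex degree is divisible by $k-1$. The condition $n(n-1)\equiv 0\pmod{k(k-1)}$ says $e(K_n)$ is divisible by $e(K_k)$. So the task is to show these conditions suffice for large $n$. We would follow the iterative absorption method (as in Glock, K\"uhn, Lo and Osthus) rather than Wilson's original PBD-closure machinery, since it is more robust and closer in spirit to the random methods used elsewhere in this paper.

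The steps, in order, would be as follows.
\begin{enumerate}
\item Fix a \emph{vortex}: a nested sequence $[n]=U_0\supseteq U_1\supseteq\cdots\supseteq U_t$ of random subsets with $|U_{i+1}|\approx \mu|U_i|$ and $|U_t|$ bounded. Typicality of random sets ensures every vertex has about the expected number of neighbours in each $U_i$.
\item Before anything else, set aside a sparse family of \emph{absorbers}, one for every possible divisible leftover graph $L$ on $U_t$. There are only finitely many such $L$ since $|U_t|=O(1)$. An absorber $A_L$ is a graph such that both $A_L$ and $A_L\cup L$ have $K_k$-decompositions.
\item On the remaining graph, run the main iteration. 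At stage $i$, find an approximate $K_k$-decomposition of $G[U_i]\setminus G[U_{i+1}]$, except for edges inside $U_{i+1}$, by the R\"odl nibble / Pippenger--Spencer theorem. Then use a \emph{cover-down} lemma to cover all remaining edges with at least one endpoint outside $U_{i+1}$ by copies of $K_k$ that use reserved edges inside $U_{i+1}$. Divisibility is preserved at every stage, since we always remove edge-disjoint copies of $K_k$ from a graph satisfying the degree and edge divisibility conditions.
\item After $t$ stages the leftover is a $K_k$-divisible graph $L$ on $U_t$. It is absorbed by $A_L$, completing the decomposition.
\end{enumerate}

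We expect the main obstacle to be constructing the absorbers. This requires showing that any two $K_k$-divisible graphs $L,L'$ of bounded size can be ``transformed'' into one another via bounded gadgets, so that $L\cup T$ and $L'\cup T$ are both $K_k$-decomposable for some transformer $T$. It then requires reducing every divisible $L$ to a canonical graph that is itself decomposable, and this is where the divisibility hypotheses are genuinely used. Here we would first try the Barber--K\"uhn--Lo--Osthus approach: build transformers by ``switching'' along edge-disjoint copies of $K_k$ sharing a vertex, and use the degree condition to pair up edges at each vertex into units of $k-1$. The cover-down lemma is the second technical hurdle. It is handled by a random greedy choice of $k$-cliques with one outer vertex and $k-1$ vertices in $U_{i+1}$, with concentration controlled by Chernoff-type bounds as in the proof of \cref{good}. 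The threshold $N$ in the statement comes from needing $n$ large enough for all these probabilistic estimates.
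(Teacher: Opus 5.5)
Your plan matches the paper: Theorem~\ref{design} is quoted from Wilson~\cite{wilson1975existence} without proof and used only as a black box (with $k-1$ in place of $k$, to produce the design $\mathcal J$), so simply invoking it is exactly right, and your reformulation as a $K_k$-decomposition of $K_n$, with the two congruences as the degree and edge-count divisibility conditions, is correct. Your iterative-absorption outline is an optional aside, but if you flesh it out, random greedy alone cannot do the cover-down step: after covering the leftover edges with both endpoints outside $U_{i+1}$ (by cliques with two outer vertices), each outer vertex $x$ needs an exact $K_{k-1}$-factor of its remaining neighbourhood in $U_{i+1}$, chosen edge-disjointly over all $x$ (e.g.\ via Hajnal--Szemer\'edi in the nearly complete available graph), because any uncovered vertex of that neighbourhood leaves an edge at $x$ that can no longer be covered.
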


Now we are ready to give our final construction.

\begin{proof}[Proof of~\cref{vanishing2}]
By \cref{design}, for some large enough integer $N$, there is a $(k-1)$-graph $\mathcal{J}$ on $[N]$ such that every pair of vertices is contained in exactly one edge.  
Let $\varepsilon>0$ be chosen from \cref{good} for $k$ and $r=4m$, and set $n\gg N, m, k, 1/\varepsilon$. 
Then by \cref{good}, there is a $(2,1^{k-2})$-type $k$-graph $\mathcal{H}$ with each part of size $n$, which satisfies \ref{C1}, \ref{C2}, \ref{C3} and possesses the property of $\mathcal{H}(n,r)$ given in \cref{21}.

Next we construct a $k$-graph $\widehat{H}$ on $\cup_{i=1}^N V_i$ by placing a copy of $\mathcal{H}$ on $\cup_{i\in J}V_i$ for every $J\in E(\mathcal{J})$, where $V_1,V_2,\dots,V_N$ are $N$ disjoint vertex sets each of size $n$. 

\begin{claim}\label{claim:surjphi}
There exists a surjection $\varphi:E(\mathcal J)\to [N]$ such that $\varphi(J)\in J$ for every $J\in E(\mathcal J)$.
\end{claim}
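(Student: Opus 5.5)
We need a system of distinct representatives in reverse: every vertex $i\in[N]$ must be chosen by some block containing it, i.e.\ a matching in the bipartite incidence graph between $[N]$ and $E(\mathcal J)$ saturating $[N]$. The plan is to apply Hall's theorem to the bipartite graph $B$ with parts $[N]$ and $E(\mathcal J)$, where $i\sim J$ iff $i\in J$. A matching $M$ saturating $[N]$ gives an injective partial assignment $J\mapsto i$ covering every $i\in[N]$. We then extend it to all of $E(\mathcal J)$ by sending each unmatched $J$ to an arbitrary element of $J$; the result is a map $\varphi$ with $\varphi(J)\in J$ that is surjective.

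The first step is to compute degrees in $B$. Since $\mathcal J$ is a $(k-1)$-graph in which every pair lies in exactly one edge, each vertex $i$ lies in exactly $r:=(N-1)/(k-2)$ edges, because the edges through $i$ partition the other $N-1$ vertices into blocks of size $k-2$. Every edge has degree $k-1$ in $B$. Since $N$ is large, $r\ge k-1$.

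Next comes Hall's condition for a set $X\subseteq[N]$. Double count the edges of $B$ leaving $X$: there are $|X|\,r$ of them, and they land in $N_B(X)$, each $J\in N_B(X)$ absorbing at most $k-1$ of them. Hence $|N_B(X)|\ge |X|\,r/(k-1)\ge |X|$. Hall's theorem then gives the saturating matching, and the extension described above finishes the proof.

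The only potential obstacle is ensuring $r\ge k-1$, i.e.\ $N-1\ge (k-1)(k-2)$. This follows from $N$ being chosen large in \cref{design}; alternatively, Fisher's inequality $|E(\mathcal J)|\ge N$ would also suffice, but the degree count is cleaner.
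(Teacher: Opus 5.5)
Your proof is correct and takes essentially the same route as the paper: Hall's theorem applied to the incidence bipartite graph, where each vertex lies in $r=(N-1)/(k-2)$ blocks of size $k-1$, so $|N_B(X)|\ge r|X|/(k-1)\ge |X|$ for large $N$. You also spell out how to extend the saturating matching to a map on all of $E(\mathcal J)$, a step the paper leaves implicit.
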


\begin{poc}
Consider the bipartite incidence graph $B$ with left part $[N]$ and right part $E(\mathcal J)$, where $i\in[N]$ is adjacent to $J\in E(\mathcal J)$ if and only if $i\in J$.
It suffices to show that $B$ contains a matching saturating the left part $[N]$. Note that in the design $\mathcal J$, each vertex $i\in[N]$ lies in exactly $r=\frac{N-1}{k-2}$ edges. 

Let $X\subseteq [N]$ be nonempty. 
Counting incidences between $X$ and its neighborhood $N_B(X)$, we have
$r|X| \;=\; e_B(X,N_B(X)) \;\le\; (k-1)\,|N_B(X)|.$
Therefore, when $N$ is large enough,
\[
|N_B(X)| \;\ge\; \frac{r}{k-1}|X| \;=\; \frac{N-1}{(k-2)(k-1)}\,|X|\ge |X|.
\]
By Hall's theorem, $B$ has a matching saturating $[N]$.
\end{poc}

Let $\varphi$ be a map guaranteed above. For every $J\in E(\mathcal{J})$, let $H_J$ be a copy of the $(2,1^{k-2})$-type $k$-graph $\mathcal{H}$ such that each edge of $H_J$ has two vertices in $V_{\varphi(J)}$ and one vertex in $V_i$ for each $i\in J\setminus\{\varphi(J)\}$. 
Furthermore, we require that for different $J,J'\in E(\mathcal{J})$ with $\varphi(J)=\varphi(J')$, the two copies $H_J$ and $H_{J'}$ agree on $V_{\varphi(J)}$. Let 
$E(\widehat{H})=\cup_{J\in E(\mathcal{J})}E(H_J).$

We claim that $\delta_2(\widehat H)\ge \varepsilon n^{k-2}$. Indeed, fix any two vertices $x,y\in V(\widehat H)$, and write $x\in V_i$ and $y\in V_j$. If $i\neq j$, then by \cref{design} there is a unique $J\in E(\mathcal J)$ with $\{i,j\}\subseteq J$. Consider the copy $H_J$ placed on $\bigcup_{t\in J}V_t$. Then $d_{\widehat H}(xy)\ge d_{H_J}(xy)\ge \varepsilon n^{k-2}$ due to \cref{good}\ref{C2} and \ref{C3}. If $i=j$, then by the surjectivity of $\varphi$ there exists $J\in E(\mathcal J)$ with $\varphi(J)=i$. In the copy $H_J$, both $x$ and $y$ lie in the $A$-part $V_i$, hence \cref{good}\ref{C1} yields $d_{\widehat H}(xy)\ge d_{H_J}(xy)\ge \varepsilon n^{k-2}$.

However, $\widehat{H}$ is not necessarily locally $2$-vanishing. The problem here is that the link graph of a vertex of $\widehat{H}$ may consist of intersecting edges from different $(2,1)$-type $k$-graphs.
To separate these link graphs, we further perform a random sparsification of the edges of $\widehat{H}$.

Let     
\[
\phi:\bigcup_{i=1}^{N}\binom{V_i}{2}\to E(\mathcal{J})
\]
be a random mapping such that for each $1\leq i\leq N$ and any $uv\in \binom{V_i}{2}$, $\phi(uv)$ is chosen independently and uniformly at random from the set 
$$E_i=\{J\in E(\mathcal{J}):\varphi(J)=i\}.$$ 
For every $J\in E(\mathcal{J})$, let $H'_J$ be the subhypergraph of $H_J$ with 
\[ 
E(H'_J)=\{e\in E(H_J) : \phi(e\cap V_{\varphi(J)})=J\}.
\]
Further, let $H$ be the subhypergraph of $\widehat{H}$ with 
$$E(H)=\cup_{J\in E(\mathcal{J})}E(H'_J).$$

\begin{claim}\label{claim:packing}
For any $v\in V(H)$ and two distinct edges $J, J'$ in $\mathcal{J}$, the links $L_{H'_{J}}(v)$ and $L_{H'_{J'}}(v)$ are vertex-disjoint. 
\end{claim}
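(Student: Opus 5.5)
The plan is a direct case analysis on where the common vertices of the two links could lie. Nothing probabilistic is needed: the claim holds for every outcome of $\phi$, because $\phi$ assigns each pair inside a part $V_i$ a single value.

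Fix $v\in V(H)$, say $v\in V_i$, and distinct $J,J'\in E(\mathcal J)$. If either link is empty there is nothing to prove. Otherwise, since every edge of $H'_J$ lies in $\bigcup_{t\in J}V_t$, we get $i\in J$ and likewise $i\in J'$. The design $\mathcal J$ from \cref{design} has every pair of indices in exactly one edge, so two distinct edges share at most one index. Hence $J\cap J'=\{i\}$.

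Now suppose $u$ lies in both $L_{H'_J}(v)$ and $L_{H'_{J'}}(v)$, and let $u\in V_t$. The vertex sets of the two links lie in $\bigcup_{s\in J}V_s$ and $\bigcup_{s\in J'}V_s$ respectively, so $t\in J\cap J'$, which forces $t=i$. Thus $u\in V_i$, and there are edges $e\in E(H'_J)$ and $e'\in E(H'_{J'})$ with $\{u,v\}\subseteq e\cap e'\cap V_i$.

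We then use the $(2,1^{k-2})$-structure of $H_J$. An edge of $H_J$ meets $V_s$ in exactly one vertex for $s\in J\setminus\{\varphi(J)\}$, and in exactly two vertices for $s=\varphi(J)$. Since $e$ contains two vertices of $V_i$, we must have $\varphi(J)=i$ and $e\cap V_{\varphi(J)}=\{u,v\}$. The same argument gives $\varphi(J')=i$ and $e'\cap V_i=\{u,v\}$.

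By the definition of the sparsified hypergraphs, $e\in E(H'_J)$ means $\phi(uv)=J$, and $e'\in E(H'_{J'})$ means $\phi(uv)=J'$. Since $\phi$ is a function, this gives $J=J'$, a contradiction.

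The only step needing care is the observation that a second vertex of $V_i$ in the link forces $i=\varphi(J)$ and forces the pair $e\cap V_i$ to be exactly $\{u,v\}$. This is precisely what the sparsification by $\phi$ was designed to separate.
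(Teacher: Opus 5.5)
Your proof is correct and follows the paper's argument. The design property rules out a common link vertex outside $V_i$. For a common vertex $u\in V_i$, the $(2,1^{k-2})$-structure forces $\varphi(J)=\varphi(J')=i$, so $\phi(uv)$ would have to equal both $J$ and $J'$. Your explicit observation that $e\cap V_{\varphi(J)}=\{u,v\}$ spells out a step the paper leaves implicit.
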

\begin{poc}
Fix $v\in V_i$ and let $J\neq J'$ be edges of $\mathcal J$. 
Suppose for a contradiction that some vertex $w\in V_t$ lies in both 
$V(L_{H'_J}(v))$ and $V(L_{H'_{J'}}(v))$.

If $t\neq i$, then there are edges of $H'_J$ and $H'_{J'}$ containing $\{v,w\}$,
so $\{i,t\}\subseteq J\cap J'$, contradicting the fact that $\C J$ is design.

If $t=i$, then $\{v,w\}$ can be contained in an edge of $H_J$ only when
$\varphi(J)=i$.
Thus $\varphi(J)=\varphi(J')=i$, and any edge $e\in H'_J$ containing $\{v,w\}$
satisfies $\phi(\{v,w\})=J$ by definition of $H'_J$. Similarly, an edge of $H'_{J'}$
containing $\{v,w\}$ implies $\phi(\{v,w\})=J'$. Since $\phi$ is a function,
we obtain $J=J'$, again a contradiction.
\end{poc}

Setting $\gamma=\frac{\varepsilon^2}{32N^{k}}$, we next show that for $H$,~\ref{D1} is satisfied with high probability and~\ref{D2} is always true, and thereby finish the proof. 

\medskip

\noindent{\bf Verifying~\ref{D1}.} 
Let $u\in V_i$ and $v\in V_j$ be two distinct vertices of $H$. If $i=j$, then $J=\phi(uv)$ is defined and by definition
$$d_{H}(uv)=d_{H'_{J}}(uv)=d_{H_{J}}(uv)\geq \varepsilon n^{k-2}\geq \gamma |V(H)|^{k-2}.$$
If $i\neq j$, by the construction of $\mathcal{J}$, there is a unique edge $J\in E(\mathcal{J})$ containing $\{i,j\}$. Suppose that $\varphi(J)\in\{i, j\}$. Without loss of generality, we may assume that $\varphi(J)=i$.
Let 
\[
U=\{w\in V_{i} : d_{H_J}(uvw)\geq \varepsilon n^{k-3}/2\} 
\text{~and~} 
W=\{w\in U : \phi(uw)=J\}.
\]
Then $|U|\geq \varepsilon n/2$ as $d_{H_J}(uv)\geq \varepsilon n^{k-2}$ and 
$|W|\sim {\rm Bin}(|U|,1/|E_{i}|)$.
As $|E_i|\le |E(\mathcal{J})|<N^2$, by Chernoff's inequality, we conclude that 
\[
\mathbb{P}\left(|W|\leq \frac{\varepsilon n}{4N^2}\right)
\leq 
\mathbb{P}\left(|W|\leq \frac{\varepsilon n}{4|E_i|}\right)
\leq e^{-\frac{\varepsilon n}{16N^2}}.
\]
Therefore, with probability at least $1-e^{-\frac{\varepsilon n}{16N^2}}$, 
\[
d_{H}(uv)
=d_{H'_J}(uv)
\geq \frac{\varepsilon n^{k-3}}{2}|W|
\geq \frac{\varepsilon^2 n^{k-2}}{8N^2}
\geq \gamma |V(H)|^{k-2}.
\]

Now suppose that $\varphi(J)=\ell\notin \{i, j\}$. Let 
\[
X=\left\{xy\in\binom{V_\ell}{2} : d_{H_J}(xyuv)\geq \varepsilon n^{k-4}/2\right\}
\text{~and~}
Y=\{xy\in X : \phi(xy)=J\}.
\]
Then $|X|\geq \varepsilon n^2/4$ as $d_{H_J}(uv)\geq \varepsilon n^{k-2}$ and 
$|Y|\sim {\rm Bin}(|X|,1/|E_\ell|)$.
Therefore, by Chernoff's inequality,
\[
\mathbb{P}\left(|Y|\leq \frac{\varepsilon n^2}{8N^2}\right)
\leq \mathbb{P}\left(|Y|\leq \frac{|X|}{2|E_\ell|}\right)
\leq e^{-\frac{\varepsilon n^2}{32N^2}},
\]
meaning that with probability at least $1-e^{-\frac{\varepsilon n^2}{32N^2}}$, 
\[
d_{H}(uv)
=d_{H'_J}(uv)
\geq \frac{\varepsilon n^{k-4}}{2}|Y|
\geq \frac{\varepsilon^2 n^{k-2}}{16N^2}
\geq \gamma |V(H)|^{k-2}.
\]

Combining all of the above cases and taking a union bound over all pairs of $V(H)$, we conclude that with probability at least $1-\binom{|V(H)|}{2}e^{-\frac{\varepsilon n}{16N^2}}$, $\delta_2(H)\geq \gamma |V(H)|^{k-2}$.

\medskip

\noindent{\bf Verifying~\ref{D2}.}
Let $F$ be a subhypergraph of $H$ with $|V(F)|\leq m$. For each $1\leq i\leq N$, let $A_i=V(F)\cap V_i$ and $\sigma_i$ be a cyclic ordering of $A_i$ as in \cref{random}. Then we get an ordering $\sigma=\sum_{i=1}^{N}\sigma_i$ of $V(F)$. In other words, $\sigma$ is a cluster ordering respecting the partition $\cup_{i=1}^{N}A_i$ with $\sigma|_{A_i}=\sigma_i$ being a cyclic ordering of $A_i$ as in \cref{random} for $1\leq i\leq N$. 

For any $1\leq i\leq N$ and $v\in A_i$, note that $$L_{H}(v)=\bigcup_{J\in E(\mathcal{J}),\,i\in J}L_{H'_J}(v)$$ 
and by~\cref{claim:packing} these link hypergraphs are vertex-disjoint. By \cref{1vanishing},
it suffices to show that $\sigma$ is a $1$-vanishing order of each $L_{H'_J}(v)$. 

Since $H'_J$ is a subhypergraph of $\mathcal{H}(n,r)$ and $|V(F)\cap V(H'_J)|\le m\le r/2$, the restriction of $\sigma$ to $V(H'_J)\cap V(F)$ is a cluster ordering satisfying the conditions of \cref{21}. Hence it is a $2$-vanishing order of $H'_J[V(F)]$, which implies by \cref{1vanishing} that $\sigma$ is a $1$-vanishing order of $L_{H'_J}(v)$. This completes the proof.
\end{proof}

\begin{proof}[Proof of \cref{2-vanishing}]
The case $k=3$ follows from \cref{zeroco}.  Let $k\ge4$, and suppose contrapositively that
$F$ has no $2$-vanishing order.  Put $m=|V(F)|$.  By \cref{vanishing2}, there are infinitely
many $k$-graphs $H$ such that
$\delta_2(H)\ge \gamma |V(H)|^{k-2}$
for some $\gamma>0$, and every subhypergraph of $H$ on at most $m$ vertices has a
$2$-vanishing order.  Such an $H$ is $F$-free: otherwise a copy of $F$ in $H$ would be a
subhypergraph on at most $m$ vertices and hence would have a $2$-vanishing order, contradicting
our assumption on $F$.
Therefore
\[
\pi_2(F)\ge \limsup_{|V(H)|\to\infty}
\frac{\gamma |V(H)|^{k-2}}{\binom{|V(H)|-2}{k-2}}>0.
\]
This proves the contrapositive, and hence \cref{2-vanishing}.
\end{proof}

\section{Suspensions: Proof of \cref{suspension}}\label{suspenandaccu}

In this section, we first prove \cref{suspension} by a random construction, and then deduce \cref{j-vanishing} from \cref{suspension} together with a straightforward random construction.

\subsection{Probabilistic tools}

We need the following three probabilistic inequalities in our proofs.

\begin{lemma}[Bounded differences for random bijections, \cite{Mcdiarmid2002}]\label{lemperm-bd}
Let $\rho$ be a uniformly random bijection between two $m$-element sets, and
let $Z=f(\rho)$. Suppose that transposing two values of $\rho$ changes $f$ by at most $L$. 
Then for every $t>0$, we have
$$\mathbb{P}\bigl(Z<\mathbb E(Z)-t\bigr)\le\exp\left(-\frac{t^2}{2mL^2}\right).$$
\end{lemma}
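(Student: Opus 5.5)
This is McDiarmid's bounded differences inequality for random permutations, and I would prove it by the standard martingale (Doob exposure) argument combined with Azuma--Hoeffding.

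\emph{Setup.} Identify the two sets with $[m]$, so that $\rho$ is a uniform permutation. Reveal $\rho(1),\rho(2),\dots,\rho(m)$ one at a time and let $\mathcal F_i$ be the $\sigma$-algebra generated by $\rho(1),\dots,\rho(i)$. Set $Z_i=\mathbb E(Z\mid\mathcal F_i)$. Then $Z_0=\mathbb E Z$ and $Z_m=Z_{m-1}=Z$, since the last value is forced, and $(Z_i)$ is a Doob martingale.

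\emph{Key step: bounded increments.} I aim to show $|Z_i-Z_{i-1}|\le L$; in fact $\sup Z_i-\inf Z_i\le L$ conditionally on $\mathcal F_{i-1}$. Fix $\rho(1),\dots,\rho(i-1)$, and let $R$ be the set of remaining values. For two candidate values $a,b\in R$ for $\rho(i)$, couple the conditional distributions as follows:
\[
\rho\mapsto\tau_{ab}\circ\rho,
\]
where $\tau_{ab}$ swaps the values $a$ and $b$. This is a bijection between the permutations extending the fixed prefix with $\rho(i)=a$ and those with $\rho(i)=b$, and it preserves the uniform measure. Each coupled pair differs by a single transposition of two values, so $f$ changes by at most $L$. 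Hence
\[
\bigl|\mathbb E(Z\mid\mathcal F_{i-1},\rho(i)=a)-\mathbb E(Z\mid\mathcal F_{i-1},\rho(i)=b)\bigr|\le L.
\]
Therefore $Z_i$ takes values, given $\mathcal F_{i-1}$, in an interval of length at most $L$ that contains $Z_{i-1}$, since $Z_{i-1}$ is an average of these values. In particular $|Z_i-Z_{i-1}|\le L$.

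\emph{Concentration.} Apply Azuma--Hoeffding to the lower tail. With increments bounded by $c_i=L$, we have $\mathbb E[e^{-\lambda(Z_i-Z_{i-1})}\mid\mathcal F_{i-1}]\le e^{\lambda^2L^2/2}$. Iterating over the $m$ steps and applying Markov's inequality gives
\[
\mathbb P(Z<\mathbb E Z-t)\le \exp\!\Bigl(-\lambda t+\tfrac{m\lambda^2L^2}{2}\Bigr),
\]
and choosing $\lambda=t/(mL^2)$ yields $\exp(-t^2/(2mL^2))$.

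\emph{Main obstacle.} The only delicate point is the measure-preserving coupling: the swap must be applied to values rather than positions, so that it respects the already-revealed prefix. I would verify this carefully. Everything else is the routine Azuma computation. One could even sharpen the bound to range $L$ and Hoeffding's lemma, giving exponent $2t^2/(mL^2)$, but the stated weaker form suffices.
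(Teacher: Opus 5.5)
Your proof is correct. The paper gives no proof of this lemma and simply cites McDiarmid (2002), where it is a special case of more general concentration results for functions of independent random permutations.

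Your route is the standard self-contained one. You use the Doob martingale obtained by revealing $\rho(1),\dots,\rho(m)$, and you control its increments with the value-swap coupling $\rho\mapsto\tau_{ab}\circ\rho$. That coupling is exactly right, for three reasons:
\begin{itemize}
\item Since $a,b$ are not used by the revealed prefix, it fixes $\rho(1),\dots,\rho(i-1)$.
\item It is a measure-preserving bijection between the two conditional fibres $\{\rho(i)=a\}$ and $\{\rho(i)=b\}$.
\item $\tau_{ab}\circ\rho=\rho\circ(i\;\rho^{-1}(b))$ is precisely a transposition of two values of $\rho$, which is the move the hypothesis controls.
\end{itemize}
Once you have the increment bound $|Z_i-Z_{i-1}|\le L$, the tail estimate is literally the paper's own \cref{azuma} with $c_i=L$ and $N=m$. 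Your version therefore makes the section self-contained using only tools the paper already states; the citation buys brevity and access to McDiarmid's more general statements.

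As you remark, $Z_i$ ranges, conditionally on $\mathcal F_{i-1}$, over an interval of length $L$ rather than $[-L,L]$. Using Hoeffding's lemma then gives $\exp\bigl(-2t^2/(mL^2)\bigr)$, even with $m-1$ in place of $m$ since the last step is trivial. This is the form usually quoted from McDiarmid, and the weaker bound stated in the lemma follows either way.
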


\begin{lemma}[McDiarmid's inequality, \cite{McDiarmid1989}]\label{lem:mcdiarmid}
Let $X_1,\ldots,X_m$ be independent random variables, where
$X_i$ takes values in a set $\Omega_i$. Let $f:\Omega_1\times\cdots\times\Omega_m\to \mathbb R$
be a function satisfying the bounded-differences condition
$$|f(x_1,\ldots,x_m)-f(x_1,\ldots,x_{i-1},x_i',x_{i+1},\ldots,x_m)|\le c_i$$
for every $1\le i\le m$, every $x_j\in\Omega_j$, and every
$x_i'\in\Omega_i$. If
$Z=f(X_1,\ldots,X_m)$,
then for every $t>0$, we have
$$\mathbb{P}(Z\le \mathbb E (Z)-t)\le
\exp\left(-\frac{2t^2}{\sum_{i=1}^m c_i^2}\right).$$
\end{lemma}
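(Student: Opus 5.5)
The plan is the standard martingale argument: build the Doob martingale of $Z$ with respect to the revealed coordinates and apply the Azuma--Hoeffding method. One refinement is needed to reach the sharp constant $2$ in the exponent. For $0\le i\le m$ let $\mathcal F_i$ be the $\sigma$-algebra generated by $X_1,\ldots,X_i$, and set $Z_i=\mathbb E(Z\mid \mathcal F_i)$. Then $Z_0=\mathbb E(Z)$, $Z_m=Z$, and $Z-\mathbb E(Z)=\sum_{i=1}^m D_i$ with $D_i=Z_i-Z_{i-1}$. By Markov's inequality applied to $e^{-s(Z-\mathbb E Z)}$ for a parameter $s>0$, it suffices to bound
$$\mathbb E\, e^{-s\sum_i D_i}\le \exp\Bigl(\frac{s^2}{8}\sum_{i=1}^m c_i^2\Bigr).$$
Optimizing at $s=4t/\sum_i c_i^2$ then gives exactly $\exp\bigl(-2t^2/\sum_i c_i^2\bigr)$.

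The key step, and the one I expect to be the only real obstacle, is to show that conditionally on $\mathcal F_{i-1}$, the increment $D_i$ lies in an interval of length at most $c_i$. The cruder bound $|D_i|\le c_i$ would only give the constant $1/2$ in place of $2$.

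To prove the key step, use independence. For fixed values $x_1,\ldots,x_{i-1}$, define
$$g(x)=\mathbb E\, f(x_1,\ldots,x_{i-1},x,X_{i+1},\ldots,X_m),$$
where the expectation is over the later coordinates only; independence ensures their law does not depend on $x$. Then $Z_i=g(X_i)$, and $Z_{i-1}$ is a fixed number given $\mathcal F_{i-1}$. For any $x,x'$, the bounded-differences hypothesis applied pointwise inside the expectation gives $|g(x)-g(x')|\le c_i$. Hence $D_i$ takes values in $[\inf g-Z_{i-1},\,\sup g-Z_{i-1}]$, an $\mathcal F_{i-1}$-measurable interval of length at most $c_i$, and $\mathbb E(D_i\mid\mathcal F_{i-1})=0$.

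Hoeffding's lemma now applies: a mean-zero random variable supported on an interval of length $c$ satisfies $\mathbb E e^{\lambda D}\le e^{\lambda^2c^2/8}$. I would prove it by convexity, bounding $e^{\lambda x}$ on the interval by its chord and then checking that the logarithm of the resulting expression is at most $\lambda^2c^2/8$ via a second-order Taylor bound. Applying this conditionally with $\lambda=-s$ gives
$$\mathbb E\bigl(e^{-sD_i}\mid\mathcal F_{i-1}\bigr)\le e^{s^2c_i^2/8}.$$
Peeling off one increment at a time, by the tower property and starting from $i=m$, yields the displayed moment bound. Combined with the Markov step above, this completes the proof.
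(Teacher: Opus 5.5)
Your argument is correct. The paper gives no proof of this lemma and simply cites McDiarmid's 1989 survey, whose proof is exactly yours: the Doob martingale, the conditional-range refinement showing via independence that $D_i$ lies in an interval of length at most $c_i$, Hoeffding's lemma, and the exponential Markov bound at $s=4t/\sum_i c_i^2$. You are also right that this refinement is what separates it from the paper's \cref{azuma}: using only $|D_i|\le c_i$ there would give the weaker bound $\exp\bigl(-t^2/(2\sum_i c_i^2)\bigr)$.
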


\begin{theorem}[Azuma's inequality]\label{azuma}
 Let $X_0,X_1,\ldots,X_N$ be a martingale with $|X_{i}-X_{i-1}|\leq c_i$ for all $1\leq i\leq N$. Then for any $\lambda>0$, we have 
 $$
 \mathbb{P}\left(X_N<X_0-\lambda\right)\leq 
 \exp\left(-\frac{\lambda^2}{2\sum_{i=1}^Nc_i^2}\right).
 $$
\end{theorem}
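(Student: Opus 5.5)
We will prove the lower-tail bound of \cref{azuma} by the exponential moment (Chernoff) method applied to the martingale differences, combined with Hoeffding's lemma in conditional form. Let $(\mathcal F_i)_{i\ge 0}$ be the filtration with respect to which $(X_i)$ is a martingale (for instance the natural one), and put $D_i=X_i-X_{i-1}$. Then $\mathbb E(D_i\mid\mathcal F_{i-1})=0$ and $|D_i|\le c_i$. Since $X_0-X_N=\sum_{i=1}^N(-D_i)$ and the variables $-D_i$ satisfy the same two properties, it suffices to bound $\mathbb P\bigl(\sum_{i=1}^N Y_i>\lambda\bigr)$, where $Y_i=-D_i$. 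For any $s>0$, Markov's inequality gives
\[
\mathbb P\Bigl(\sum_{i=1}^N Y_i>\lambda\Bigr)\le e^{-s\lambda}\,\mathbb E\Bigl[\exp\Bigl(s\sum_{i=1}^N Y_i\Bigr)\Bigr].
\]

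The key step is the conditional Hoeffding bound
\[
\mathbb E\bigl(e^{sY_i}\mid\mathcal F_{i-1}\bigr)\le \cosh(sc_i)\le e^{s^2c_i^2/2}.
\]
To prove it, use convexity of $y\mapsto e^{sy}$ on $[-c_i,c_i]$:
\[
e^{sy}\le \frac{c_i-y}{2c_i}e^{-sc_i}+\frac{c_i+y}{2c_i}e^{sc_i}.
\]
Take conditional expectations and use $\mathbb E(Y_i\mid\mathcal F_{i-1})=0$; this yields $\cosh(sc_i)$. Then compare Taylor series termwise, using $(2j)!\ge 2^j j!$, to get $\cosh x\le e^{x^2/2}$. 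The case $c_i=0$ is trivial.

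Next we peel off one factor at a time via the tower property. The partial sum $\sum_{i<N}Y_i$ is $\mathcal F_{N-1}$-measurable, so
\[
\mathbb E\bigl[e^{s\sum_{i\le N}Y_i}\bigr]=\mathbb E\bigl[e^{s\sum_{i<N}Y_i}\,\mathbb E(e^{sY_N}\mid\mathcal F_{N-1})\bigr]\le e^{s^2c_N^2/2}\,\mathbb E\bigl[e^{s\sum_{i<N}Y_i}\bigr].
\]
Inducting down to the empty sum gives $\mathbb E\bigl[e^{s\sum Y_i}\bigr]\le \exp\bigl(s^2\sum_i c_i^2/2\bigr)$. Note that no assumption on $X_0$ beyond integrability is needed, since only the differences enter. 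Combining this with the Markov step gives
\[
\mathbb P\bigl(X_N<X_0-\lambda\bigr)\le \exp\Bigl(-s\lambda+\tfrac{s^2}{2}\sum_i c_i^2\Bigr).
\]
Choosing $s=\lambda/\sum_i c_i^2$ yields exactly $\exp\bigl(-\lambda^2/(2\sum_i c_i^2)\bigr)$.

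The only genuinely delicate point is the conditional Hoeffding step. One must use the conditional mean-zero property rather than independence, and the convexity bound must hold pointwise, so that it survives conditioning. Everything else is routine optimization.
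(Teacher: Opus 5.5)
Your argument is correct. It is the standard exponential-moment proof: the pointwise convexity bound gives the conditional Hoeffding estimate $\mathbb E(e^{sY_i}\mid\mathcal F_{i-1})\le e^{s^2c_i^2/2}$, the tower property peels off one factor at a time, and you optimize at $s=\lambda/\sum_i c_i^2$. That choice of $s$ implicitly uses $\sum_i c_i^2>0$, which the statement itself already assumes.

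The paper does not prove this inequality. It quotes it as a classical tool and never invokes it afterwards; the later arguments use McDiarmid's inequality and its permutation version instead. So there is no alternative route in the paper to compare against.
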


We first apply Lemmas \ref{lemperm-bd} and \ref{lem:mcdiarmid} to give two elementary random relabelling estimates, which will be used in the subsequent proof of \cref{suspension}. 

\begin{lemma}\label{tool}
Fix integers $2\le \ell<k$ and real numbers $0<\alpha,\beta\le 1$.  There is a constant
$c=c(k,\ell,\alpha,\beta)>0$ such that the following two statements hold for all sufficiently large $m$.

\stepcounter{propcounter}
\begin{enumerate}[label = \rm({\bfseries \Alph{propcounter}\arabic{enumi}})] 
\item\label{S1} Let $U$ and $W$ be two sets of size $m$.  Let
$\mathcal A\subseteq U^{(k-\ell)}$ and $\mathcal B\subseteq W^{(k-\ell)}$ satisfy that
$|\mathcal A|\ge \alpha\binom{m}{k-\ell}$ and $|\mathcal B|\ge \beta\binom{m}{k-\ell}.$
If $\rho:U\to W$ is a uniformly random bijection, then $$ \mathbb{P}\left( |\mathcal A\cap \rho^{-1}(\mathcal B)| <\frac{\alpha\beta}{2}\binom{m}{k-\ell} \right) \le e^{-cm},$$
where $\rho^{-1}(\mathcal B):=\left\{\{\rho^{-1}(x_1),\dots,\rho^{-1}(x_{k-\ell})\} : \{x_1,\dots,x_{k-\ell}\}\in\mathcal{B}\right\}$.

\item\label{S2} Let $S$ and $U$ be two disjoint sets with $|S|=\ell$ and $|U|=m$, and $H$ be a $(k-1)$-graph on $m+\ell-1$ vertices with
$|E(H)|\ge \alpha\binom{m+\ell-1}{k-1}.$
For each $y\in U$, independently choose a uniformly random bijection
$\sigma_y:(S\cup U)\setminus\{y\}\to V(H)$, and let 
$H_y$ be the $(k-1)$-graph on $(S\cup U)\setminus\{y\}$ with
$E(H_y)=\sigma^{-1}_y(E(H))$.
Let $\mathcal C\subseteq U^{(k-\ell)}$ be any fixed family satisfying that
$|\mathcal C|\ge \beta\binom{m}{k-\ell}$,
and $Y$ be the number of sets $R\in\mathcal C$ such that
$S\cup (R\setminus\{y\})\in E(H_y)$ for every $y\in R$.
Then $\mathbb{P}\left( Y<\frac12\alpha^{k-\ell}\beta\binom{m}{k-\ell} \right)\le e^{-cm}.$
\end{enumerate}
\end{lemma}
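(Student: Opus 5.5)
We prove the two parts separately. Each time we first bound the relevant expectation from below and then apply the bounded-differences inequality that matches the source of randomness: \cref{lemperm-bd} for \ref{S1} and \cref{lem:mcdiarmid} for \ref{S2}.

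\emph{Part \ref{S1}.} Let $Z=|\mathcal A\cap\rho^{-1}(\mathcal B)|$. For a fixed $A\in\mathcal A$, the image $\rho(A)$ is a uniformly random $(k-\ell)$-subset of $W$. Hence $\mathbb P(\rho(A)\in\mathcal B)=|\mathcal B|/\binom{m}{k-\ell}\ge\beta$, and so $\mathbb E Z\ge\alpha\beta\binom{m}{k-\ell}$.

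Transposing two values of $\rho$ changes the images of only those sets $A$ that contain one of the two swapped elements. There are at most $2\binom{m-1}{k-\ell-1}$ such sets, so $Z$ changes by at most $L:=2\binom{m-1}{k-\ell-1}=O(m^{k-\ell-1})$. Now apply \cref{lemperm-bd} with $t=\frac{\alpha\beta}{2}\binom{m}{k-\ell}=\Theta(m^{k-\ell})$. This gives the bound $\exp(-t^2/(2mL^2))=\exp(-\Theta(m^{2(k-\ell)}/m^{2(k-\ell)-1}))=e^{-\Omega(m)}$, as required.

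\emph{Part \ref{S2}.} The $\sigma_y$ for $y\in U$ are independent, so $Y$ is a function of $m$ independent variables. Changing one $\sigma_y$ affects only the sets $R\ni y$, so the bounded difference is $c_y\le\binom{m-1}{k-\ell-1}$. Then $\sum_y c_y^2=O(m^{2(k-\ell)-1})$. Once we show $\mathbb E Y\ge\alpha^{k-\ell}\beta\binom{m}{k-\ell}(1-o(1))$, \cref{lem:mcdiarmid} with $t\approx\frac12\alpha^{k-\ell}\beta\binom m{k-\ell}$ yields $e^{-\Omega(m)}$. (Any $o(1)$ loss can be absorbed by running the argument with the deviation $t=(\frac12-o(1))\alpha^{k-\ell}\beta\binom m{k-\ell}$, or by slightly reducing $c$.)

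\emph{The expectation, which is the main step.} Fix $R\in\mathcal C$. For each $y\in R$, the set $S\cup(R\setminus\{y\})$ has size $k-1$ and lies in $(S\cup U)\setminus\{y\}$. Since $\sigma_y$ is a uniform bijection, its image is a uniformly random $(k-1)$-subset of $V(H)$. Therefore it lies in $E(H)$ with probability exactly $|E(H)|/\binom{m+\ell-1}{k-1}\ge\alpha$. The $k-\ell$ events for different $y\in R$ depend on distinct independent bijections $\sigma_y$, so they are independent. Thus $\mathbb P(R\text{ counted})\ge\alpha^{k-\ell}$, and summing over $R\in\mathcal C$ gives $\mathbb E Y\ge\alpha^{k-\ell}\beta\binom{m}{k-\ell}$, with no error term in fact. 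With this bound in hand, the concentration step above finishes part \ref{S2}.

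Both parts are therefore routine, and the only care needed is checking the Lipschitz constants. In particular, for a transposition in part \ref{S1}, only sets meeting the two swapped points can change status. It is also important that independence in part \ref{S2} comes from using distinct bijections for distinct $y$, not from any independence inside a single $\sigma_y$. Choosing $c$ as the minimum of the two exponents, which depends only on $k,\ell,\alpha,\beta$, completes the proof for all sufficiently large $m$.
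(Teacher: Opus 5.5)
Your proposal is correct and follows the paper's proof essentially line for line. In \ref{S1} you bound $\mathbb{E}(Z)\ge\alpha\beta\binom{m}{k-\ell}$, bound the effect of a transposition by $2\binom{m-1}{k-\ell-1}$ and apply \cref{lemperm-bd}; in \ref{S2} you get $\mathbb{E}(Y)\ge\alpha^{k-\ell}|\mathcal C|$ from the independence of the distinct $\sigma_y$ and apply \cref{lem:mcdiarmid} with bounded differences $\binom{m-1}{k-\ell-1}$, exactly as the paper does.
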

\begin{proof}
For the first statement, set
$Z=|\mathcal A\cap \rho^{-1}(\mathcal B)|.$
Then
$\mathbb{E} (Z) =\frac{|\mathcal A||\mathcal B|}{\binom{m}{k-\ell}}\ge\alpha\beta\binom{m}{k-\ell}.$
If two values of $\rho$ are transposed, then only those $(k-\ell)$-sets containing exactly one of the two affected points can change their contribution to $Z$.  Hence the change in $Z$ is at most
$2\binom{m-1}{k-\ell-1}.$
Applying \cref{lemperm-bd} with $t=\frac12\alpha\beta\binom{m}{k-\ell}$ gives $\mathbb{P}\left(Z<\frac{\alpha\beta}{2}\binom{m}{k-\ell}\right)\le e^{-cm}$ for some $c=c(k,\ell,\alpha,\beta)>0$.

For the second statement, fix $R\in\mathcal C$.  For each $y\in R$, the set $S\cup(R\setminus\{y\})$ is a fixed $(k-1)$-set inside $(S\cup U)\setminus\{y\}$.  Since $H_y$ is a uniformly random copy of $H$,
$$\mathbb{P}\bigl(S\cup(R\setminus\{y\})\in E(H_y)\bigr)=\frac{|E(H)|}{\binom{m+\ell-1}{k-1}}\ge \alpha.$$
The events for distinct $y\in R$ depend on independent random bijections, so
$$\mathbb{P}\bigl(R\text{ is counted by }Y\bigr)\ge \alpha^{k-\ell}.
$$
Therefore $\mathbb{E} (Y)\ge \alpha^{k-\ell}|\mathcal C|\ge \alpha^{k-\ell}\beta\binom{m}{k-\ell}.$
Now view $Y$ as a function of the independent random bijections $(\sigma_y)_{y\in U}$.  Resampling one bijection $\sigma_y$ can affect only those sets $R\in\mathcal C$ containing $y$, and there are at most $\binom{m-1}{k-\ell-1}$
such sets. 
Applying \cref{lem:mcdiarmid} with $t=\frac12\alpha^{k-\ell}\beta\binom{m}{k-\ell}$
yields that $\mathbb{P}\left(Y<\frac12\alpha^{k-\ell}\beta\binom{m}{k-\ell}\right)\le e^{-cm}$ for some $c=c(k,\ell,\alpha,\beta)>0$.
\end{proof}

\subsection{Proof of \cref{suspension}}

For any $(k-1)$-graph $F$, it is easy to see that $\pi_{\ell}(\mathcal  S_F)\le \pi_{\ell-1}(F)$ for all $2\leq \ell<k$. Consequently, $\pi_{\ell-1}(F)=0$ implies $\pi_{\ell}(\mathcal{S}_F)=0$. For the other direction, let $F$ be a $(k-1)$-graph with $\pi_{\ell-1}(F)=\alpha>0$. 
Then for every sufficiently large integer $N$, there exists an $F$-free $(k-1)$-graph $H$ on $N-1$ vertices with
$\delta_{\ell-1}(H) \ge \frac{\alpha}{2}\binom{N-\ell}{k-\ell}$.
We next show that $\pi_\ell(\mathcal{S}_F) > 0$ by constructing an $\mathcal{S}_F$-free $k$-graph $\mathcal{H}$ on $N$ vertices with a positive minimum $\ell$-degree.

For each $x \in [N]$, independently choose a uniformly random bijection $\sigma_x \colon [N] \setminus \{x\} \to V(H)$, and let $H_x$ be the $(k-1)$-graph on $[N] \setminus \{x\}$ with $E(H_x)=\sigma_x^{-1}(E(H))$.
Thus a $(k-1)$-set $A \subseteq [N] \setminus \{x\}$ is an edge of $H_x$ if and only if $\sigma_x(A):=\{\sigma_x(y) : y\in A\}$ is an edge of $H$.
Now define the $k$-graph $\mathcal{H}$ by letting a $k$-set $e \subseteq [N]$ be an edge of $\mathcal{H}$ if and only if
\begin{equation}\label{defedgeofH}
e \setminus \{x\} \in E(H_x) \qquad\text{for every } x \in e.
\end{equation}
Clearly, $L_{\mathcal{H}}(x) \subseteq H_x$ holds for every $x \in [N]$. 
Since $H_x$ is $F$-free, it follows that $\mathcal{H}$ is $\mathcal{S}_F$-free. 

We next show that, with high probability, $\mathcal{H}$ has a positive minimum $\ell$-degree.  
Fix an $\ell$-set $S \subseteq [N]$, and let $U = [N] \setminus S$ and $m=N-\ell$.
For each $x \in S$, define
$$
\mathcal{A}_x = \left\{R \in U^{(k-\ell)} : (S \setminus \{x\}) \cup R \in E(H_x)\right\}.
$$
Clearly, $L_{\mathcal{H}}(S)\subseteq \bigcap_{x \in S} \mathcal{A}_x$, and 
$|\mathcal{A}_x|\geq \delta_{\ell-1}(H_x) \ge \frac{\alpha}{2}\binom{m}{k-\ell}$ for every $x \in S$.
The following claim shows that with high probability $\bigcap_{x \in S} \mathcal{A}_x$ also has a large cardinality. 

\begin{claim}\label{claiminduction}
There is a constant $c_1 = c_1(\alpha, k, \ell) > 0$ such that 
$$
\left|\bigcap_{x \in S} \mathcal{A}_x\right| \ge \left(\frac{\alpha}{4}\right)^\ell \binom{m}{k-\ell}
$$
holds with probability at least $1 - e^{-c_1N}$.
\end{claim}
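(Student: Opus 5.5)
We argue by induction on the number of sets intersected, peeling off one $x\in S$ at a time and using \cref{tool}\ref{S1} at each step. Write $S=\{x_1,\dots,x_\ell\}$ and set $\mathcal B_j=\bigcap_{i\le j}\mathcal A_{x_i}$. We aim to show that, with probability at least $1-je^{-cm}$,
\[
|\mathcal B_j|\ge \beta_j\binom{m}{k-\ell}, \qquad \beta_j:=2\left(\frac{\alpha}{4}\right)^{j}.
\]
For $j=1$ this holds deterministically, since $|\mathcal A_{x_1}|\ge \frac{\alpha}{2}\binom{m}{k-\ell}$. The recursion $\beta_{j+1}=\frac{\alpha}{2}\cdot\frac{\beta_j}{2}\cdot\frac{1}{1}$ requires checking constants. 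A cleaner choice is to apply \ref{S1} with $\alpha/2$ and $\beta_j$, which gives $\beta_{j+1}=\frac{\alpha}{4}\beta_j$, so $\beta_\ell=\frac{\alpha}{2}(\alpha/4)^{\ell-1}\ge(\alpha/4)^\ell$. This yields the claimed bound with room to spare.

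The key observation concerns the step from $j$ to $j+1$. The set $\mathcal B_j$ is determined by $\sigma_{x_1},\dots,\sigma_{x_j}$ alone, while $\mathcal A_{x_{j+1}}$ is determined by $\sigma_{x_{j+1}}$ alone, and the bijections are independent. We therefore condition on $\sigma_{x_1},\dots,\sigma_{x_j}$, which fixes $\mathcal B_j$, and exploit the randomness of $\sigma:=\sigma_{x_{j+1}}$.

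To get a uniformly random bijection on $U$, we further condition on the images $\sigma(s)$ for $s\in S\setminus\{x_{j+1}\}$. Let $T=\sigma(S\setminus\{x_{j+1}\})$, which is an $(\ell-1)$-subset of $V(H)$. Conditioned on these images, $\rho:=\sigma|_U$ is a uniformly random bijection from $U$ onto $W:=V(H)\setminus T$, and $|W|=N-\ell=m$. Moreover,
\[
\mathcal A_{x_{j+1}}=\rho^{-1}(\mathcal B), \qquad \mathcal B:=L_H(T)\subseteq W^{(k-\ell)}.
\]
Here $|\mathcal B|\ge\delta_{\ell-1}(H)\ge\frac{\alpha}{2}\binom{N-\ell}{k-\ell}=\frac{\alpha}{2}\binom{m}{k-\ell}$. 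Now \cref{tool}\ref{S1}, applied with $\mathcal A=\mathcal B_j$ (density $\beta_j$) and $\mathcal B$ (density $\alpha/2$), gives
\[
|\mathcal B_{j+1}|=|\mathcal B_j\cap\rho^{-1}(\mathcal B)|\ge \frac{\alpha\beta_j}{4}\binom{m}{k-\ell}
\]
except with probability $e^{-cm}$, where $c=c(k,\ell,\beta_j,\alpha/2)$. Because this bound is uniform over all conditionings, it also holds unconditionally. A union bound over the $\ell$ steps gives failure probability at most $\ell e^{-cm}\le e^{-c_1N}$ for a suitable $c_1$ once $N$ is large, using $m=N-\ell$.

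The only delicate point is this conditioning: $\mathcal A_x$ depends on where $\sigma_x$ sends $S\setminus\{x\}$, so we must first fix those images. Then the residual bijection on $U$ is uniform, and its target family, the link of $T$ in $H$, has the required density by the minimum $(\ell-1)$-degree of $H$. Since the constants $\beta_j$ depend only on $\alpha$ and $\ell$, finitely many applications of \ref{S1} suffice.
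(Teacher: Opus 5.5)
Your proof is correct and is essentially the paper's argument. You induct over $x_1,\dots,x_\ell$, condition on the earlier bijections and on $T=\sigma_{x_{j+1}}(S\setminus\{x_{j+1}\})$ so that $\sigma_{x_{j+1}}|_U$ is a uniform bijection onto $V(H)\setminus T$ with $\mathcal{A}_{x_{j+1}}=\rho^{-1}(L_H(T))$, apply \ref{S1} with densities $\beta_j$ and $\alpha/2$, and sum the failure probabilities over the $\ell$ steps. One small point: the conditional bound from \ref{S1} holds only on conditionings where $|\mathcal{B}_j|\ge\beta_j\binom{m}{k-\ell}$, not on all conditionings, but that restricted bound is exactly what your union bound over the steps uses.
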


\begin{poc}
Suppose that $S = \{x_1, x_2, \dots, x_\ell\}$, and for $1\leq i \leq\ell$, let $B_i$ be the event that
$\Bigl|\bigcap_{j=1}^{i} \mathcal{A}_{x_j}\Bigr| \ge \left(\frac{\alpha}{4}\right)^{i} \binom{m}{k-\ell}.$
Let $d_i=c(k,\ell,(\frac{\alpha}{4})^i,\frac{\alpha}{2})$ given by \cref{tool} for $1\leq i\leq \ell-1$ and $c'=\min\{d_1,\dots,d_{\ell-1}\}$.
We next prove that $\mathbb{P}(B_i)\geq 1-ie^{-c'm}$ by induction on $i$, and therefore our claim follows from that 
$\mathbb{P}(B_\ell)\geq 1-\ell e^{-c'm}\geq 1-e^{-c_1 N}$ for some constant $c_1$.

Since $|\mathcal{A}_{x}| \ge \frac{\alpha}{2}\binom{m}{k-\ell}$ for any $x\in S$, we have $\mathbb{P}(B_1)=1$. Now suppose that $i>1$ and $\mathbb{P}(B_{i-1})\geq 1-(i-1)e^{-c'm}$. 
For an $(\ell-1)$-subset $T$ of $V(H)$, let $\Psi_T$ be the collection of all bijections $\sigma: [N]\setminus\{x_i\}\rightarrow V(H)$ satisfying that $\sigma(S\setminus\{x_i\})=T$.
To estimate $\mathbb{P}(B_i)$, we first give a bound of $\mathbb{P}\left(B_i \mid B_{i-1}\cap \{\sigma_{x_i}\in\Psi_T\}\right)$. 

Let $\rho \colon U \to V(H) \setminus T$ be a uniformly random bijection.
Set $\mathcal{B}_T=L_H(T)$ and $\mathcal{A}= \bigcap_{j=1}^{i-1} \mathcal{A}_{x_j}\subseteq U^{(k-\ell)}$. 
Note that $\mathcal{A}_{x_i}=\rho^{-1}(\mathcal{B}_T)$ and $|\mathcal{A}|\geq \left(\frac{\alpha}{4}\right)^{i-1} \binom{m}{k-\ell}$ conditional on $B_{i-1}\cap \{\sigma_{x_i}\in\Psi_T\}$.
Then we have 
\begin{align}
\mathbb{P}\left(B_i \mid B_{i-1}\cap \{\sigma_{x_i}\in\Psi_T\}\right)
&=\mathbb{P}\left(|\mathcal{A}\cap \mathcal{A}_{x_i}|\geq\left(\frac{\alpha}{4}\right)^{i} \binom{m}{k-\ell} \mid B_{i-1}\cap \{\sigma_{x_i}\in\Psi_T\}\right)\nonumber\\
&=\mathbb{P}\left(|\mathcal{A}\cap \rho^{-1}(\mathcal{B}_T)|\geq\left(\frac{\alpha}{4}\right)^{i} \binom{m}{k-\ell} \mid B_{i-1}\right)\nonumber\\
&\geq 1-e^{-d_{i-1}m},\nonumber
\end{align}
where the last inequality follows from \ref{S1} and that $|\mathcal{B}_T|\geq \frac{\alpha}{2}\binom{m}{k-\ell}$.
Therefore,  
\begin{align}
\mathbb{P}(B_i)&\geq \mathbb{P}(B_i\cap B_{i-1})\nonumber\\
&=\sum_{T\in V(H)^{(\ell-1)}}\mathbb{P}\left(B_i\cap B_{i-1}\cap \{\sigma_{x_i}\in\Psi_T\}\right)\nonumber\\
&=\sum_{T\in V(H)^{(\ell-1)}}\mathbb{P}\left(B_i\mid B_{i-1}\cap \{\sigma_{x_i}\in\Psi_T\}\right)\mathbb{P}\left(B_{i-1}\cap \{\sigma_{x_i}\in\Psi_T\}\right)\nonumber\\
&=\sum_{T\in V(H)^{(\ell-1)}}\mathbb{P}\left(B_i\mid B_{i-1}\cap \{\sigma_{x_i}\in\Psi_T\}\right)\mathbb{P}\left(B_{i-1}\right)\mathbb{P}\left(\sigma_{x_i}\in\Psi_T\right)\nonumber\\
&\geq (1-e^{-d_{i-1}m})\left(1-(i-1)e^{-c'm}\right)\left(\sum_{T\in V(H)^{(\ell-1)}}\mathbb{P}\left(\sigma_{x_i}\in\Psi_T\right)\right)\nonumber\\
&\geq 1-ie^{-c'm}.\nonumber\qedhere
\end{align}
\end{poc}

For every $R \in \bigcap_{x \in S} \mathcal{A}_x$, all conditions in (\ref{defedgeofH}) corresponding to vertices $x \in S$ are already satisfied for the $k$-set $S \cup R$.  To make $S \cup R$ an edge of $\mathcal{H}$, it remains to require that  
$S \cup (R \setminus \{y\}) \in E(H_y)$ for every $y \in R$. 
Note that the random copies $\{H_y : y \in U\}$ are independent conditional on the event $B_\ell$. 
By double-counting pairs $(T,e)$ with $T\in V(G)^{(\ell-1)}$, $e\in E(G)$, and $T\subseteq e$, we also have
$e(G)\ge\frac \alpha2\binom{m+\ell-1}{k-1}.$
Therefore, following from \ref{S2}, for some constant $c_2>0$ 
$$\mathbb{P}\left(d_{\mathcal{H}}(S)<\frac{\alpha^k}{2^{k+\ell+1}}\binom{m}{k-\ell} \middle| B_\ell \right)\le e^{-c_2N}.$$  
By \cref{claiminduction}, we conclude that for some constant $c_3>0$,
$
d_{\mathcal{H}}(S)\ge \frac{\alpha^k}{2^{k+\ell+1}}\binom{m}{k-\ell}
$
holds with probability at least $1-e^{-c_3N}$.

Finally, taking a union bound over all $\ell$-sets $S\subseteq [N]$, we conclude that 
$\delta_\ell(\mathcal{H})\geq \frac{\alpha^k}{2^{k+\ell+1}}\binom{N-\ell}{k-\ell}$
with probability at least 
$1-\binom{N}{\ell}e^{-c_3N}$, which tends to $1$ as $N$ tends to infinity. Therefore, for all sufficiently large $N$, there exists an $\mathcal{S}_F$-free $k$-graph on $N$ vertices with a positive minimum $\ell$-degree, which means that $\pi_\ell(\mathcal{S}_F)>0.$ This completes the proof.

\subsection{Proof of \cref{j-vanishing}}

We first derive \cref{j-vanishing}\ref{cond2} from \cref{suspension}.
Let $F$ be a $k$-graph with $\pi_{\ell}(F)=0$.
Fix $S\in V(F)^{(\ell-1)}$ and let $L=L_F(S)$. Then $L$ is a $(k-\ell+1)$-graph. Let $\mathcal{S}^{\ell-1}_L$ denote the $(\ell-1)$-fold suspension of $L$ using the vertices of $S$ as apex vertices, 
that is, $E(\mathcal{S}^{\ell-1}_L)=\{S\cup e:e\in E(L)\}.$
Clearly, $\mathcal{S}^{\ell-1}_L\subseteq F$. 
Thus $\pi_\ell\bigl(\mathcal{S}^{\ell-1}_L\bigr)=0$ follows from that $\pi_\ell(F)=0$. 
Applying \cref{suspension} repeatedly gives that $\pi_1(L)=\pi(L)=0.$ 
By the theorem of Erd\H{o}s, an $r$-graph has ordinary Turán density zero
if and only if it is $r$-partite. Applying this with $r=k-\ell+1$, we conclude that  $L=L_F(S)$ is $(k-\ell+1)$-partite.

We now prove the contrapositive of \cref{j-vanishing}\ref{cond1}.
Suppose that $F$ is a $k$-graph with no
$(\ell+1)$-vanishing order. We next show that $\pi_\ell(F)>0$.
For this purpose, we construct a random $k$-graph $H$ on vertex set
$[n]$ as follows. 

Let $\Phi:[n]^{(\ell+1)}\to [k]^{(\ell+1)}$
be a uniformly random map, chosen independently on all $(\ell+1)$-sets.
For a $k$-set $e=\{t_1,\ldots,t_k\}\subseteq [n]$ where $t_1<\cdots<t_k$,
we declare $e$ to be an edge of $H$ if and only if
$\Phi\bigl(\{t_i:i\in \mathcal{L}\}\bigr)=\mathcal{L}$
for every $\mathcal{L}\in[k]^{(\ell+1)}.$
Then the natural order on $[n]$ clearly gives an $(\ell+1)$-vanishing order of $H$. 
Hence every subgraph of $H$ also has
an $(\ell+1)$-vanishing order, implying that $H$ is $F$-free.

It remains to show that $H$ has a positive minimum $\ell$-degree with positive probability for all sufficiently large $n$, and thus $\pi_\ell(F)>0$ follows.
Fix an $S\in [n]^{(\ell)}$, set $U=[n]\setminus S$, and write $m=n-\ell$.  Put
$ p=\binom{k}{\ell+1}^{-\binom{k}{\ell+1}}.$
For every $R\in U^{(k-\ell)}$, the $k$-set $S\cup R$ is an edge of $H$ with probability $p$, and hence
$\mathbb E (d_H(S))=p\binom{m}{k-\ell}.$
We now expose the random variables in blocks.  Fix an arbitrary ordering
$U=\{y_1,\ldots,y_m\}$.  For $1\le i\le m$, let
\[
\mathcal T_i=
\left\{T\in [n]^{(\ell+1)}:
 y_i\in T\setminus S \text{ and } y_i \text{ is the first vertex of }T\setminus S
 \text{ in the order } y_1,\ldots,y_m
\right\},
\]
and let $\mathcal X_i=(\Phi(T))_{T\in\mathcal T_i}$.  The blocks
$\mathcal X_1,\ldots,\mathcal X_m$ are independent and contain all random variables on which $d_H(S)$ may depend.
Resampling a single block $\mathcal X_i$ can change the status only of those completions $S\cup R$ with $y_i\in R$.  Therefore it changes $d_H(S)$ by at most
$D:=\binom{m-1}{k-\ell-1}.$
Applying \cref{lem:mcdiarmid} to the function $d_H(S)=f(\mathcal X_1,\ldots,\mathcal X_m)$ gives
\[
\mathbb P\left(d_H(S)<\frac p2\binom{m}{k-\ell}\right)
\le
\exp\left(-\frac{2(p\binom{m}{k-\ell}/2)^2}{mD^2}\right)
\le e^{-c n}
\]
for some constant $c=c(k,\ell)>0$.  Consequently,
\[
\mathbb P\left(\delta_\ell(H)<\frac p2\binom{n-\ell}{k-\ell}\right)
\le \binom n\ell e^{-cn}=o(1).
\]
Thus, for all sufficiently large $n$, there exists an $F$-free $k$-graph $H$ with
$\delta_\ell(H)\ge \frac p2\binom{n-\ell}{k-\ell},$
so $\pi_\ell(F)>0$.  This proves the contrapositive of \cref{j-vanishing}\ref{cond1}.


\section{Zero accumulation point: Proof of \cref{j-accumulation}}\label{upperbound}

We begin this section by proving \cref{lowdensity}. Should \cref{general} be valid, the desired statement \cref{j-accumulation} would follow as an immediate consequence. Regrettably, a proof of \cref{general} remains beyond our current scope. Fortunately, we may use \cref{suspension} to derive \cref{j-accumulation} via induction, with only \cref{2-vanishing} as our prerequisite.

\begin{theorem}\label{lowdensity}
Let $2\leq \ell\leq k-1$. Then for any $\varepsilon>0$ there exist $k$-graphs $F$ that satisfy $\pi_{\ell}(F)<\varepsilon$ while admitting no $\ell$-vanishing order.   
\end{theorem}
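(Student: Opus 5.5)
The plan is to make the tensor-product strategy from \cref{sec:pf-idea} work for general $\ell$. Replace the threshold $\binom{m-3}{k-3}$ there by $\binom{m-\ell-1}{k-\ell-1}$, the bound in \cref{diff}.

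\textbf{Construction.} Fix $m$ large in terms of $k,\ell,\varepsilon$. Let $\mathcal G_m=\{G_1,\dots,G_s\}$ be the finite family of all $m$-vertex $k$-graphs $G$ (up to isomorphism, on vertex set $[m]$) with $\delta_\ell(G)>\binom{m-\ell-1}{k-\ell-1}$. Define $F=G_1\otimes\cdots\otimes G_s$ on vertex set $[m]^s$: a $k$-set $\{u^1,\dots,u^k\}$ of vectors is an edge if, for every coordinate $i$, the $i$-th coordinates $u^1_i,\dots,u^k_i$ are pairwise distinct and $\{u^1_i,\dots,u^k_i\}\in E(G_i)$. Projecting onto any coordinate $i$ maps $F$ homomorphically, and injectively on each edge, into $G_i$. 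Hence $F\subseteq G_i(m^{s-1})$ for every $i$.

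\textbf{Upper bound $\pi_\ell(F)<\varepsilon$.} Let $H$ be a large $n$-vertex $k$-graph with $\delta_\ell(H)\ge\varepsilon\binom{n-\ell}{k-\ell}$, and sample a uniformly random $m$-set $M\subseteq V(H)$.
\begin{itemize}
\item For a fixed $\ell$-set $T\subseteq M$, the degree $d_{H[M]}(T)$ is hypergeometric-like with mean about $\varepsilon\binom{m-\ell}{k-\ell}$.
\item A bounded-differences argument (e.g.\ \cref{lem:mcdiarmid} applied to sampling without replacement) gives concentration with failure probability $e^{-c\varepsilon m}$.
\item A union bound over the $\binom m\ell$ choices of $T$ shows that, with probability at least $1/2$, $\delta_\ell(H[M])\ge \frac{\varepsilon}{2}\binom{m-\ell}{k-\ell}>\binom{m-\ell-1}{k-\ell-1}$, once $m\gg k/\varepsilon$.
\end{itemize}
So at least half of all $m$-sets induce a copy of some member of $\mathcal G_m$. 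Because $\mathcal G_m$ is finite, some fixed $G_i$ occurs as a spanning subgraph of $H[M]$ for $\Omega(n^m)$ sets $M$. Erdős's supersaturation and blow-up theorem for $k$-graphs then gives $G_i(t)\subseteq H$ for $t=m^{s-1}$ and $n$ large. Since $F\subseteq G_i(t)$, we get $F\subseteq H$, so $\pi_\ell(F)\le\varepsilon$. Running the argument with $\varepsilon/2$ gives strict inequality.

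\textbf{No $\ell$-vanishing order.} This is the step I expect to be the main obstacle. Observation~\ref{diff} cannot be applied to a single copy of some $G_i$ inside $F$, because such copies do not sit inside $F$. Instead, I would transport the argument of Observation~\ref{diff} into $F$ directly, using two facts.
\begin{itemize}
\item The link in $F$ of an $\ell$-set $X$ whose coordinates are distinct in every coordinate is a ``product'' of the links of its projections $X_i$ in $G_i$.
\item $\mathcal G_m$ is closed under relabelling, so for any configuration arising from $\sigma$ there is a coordinate $i$ whose $G_i$ is positioned adversarially.
\end{itemize}
Concretely, suppose $\sigma$ is an $\ell$-vanishing order of $F$ with coloring $\varphi$.
\begin{enumerate}
\item Take an edge $e$ of $F$ with $\sigma$-smallest vertex $w$, and let $X\subseteq e\setminus\{w\}$ be an $\ell$-set.
\item Since $w\notin X$, the color $\varphi(X)$ does not contain position $1$. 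By \cref{index}, every edge $e'\supseteq X$ therefore has the same positive number of vertices preceding $\min_\sigma X$.
\item Pass to projections. In each coordinate $i$, the link of $X_i$ in $G_i$ has more than $\binom{m-\ell-1}{k-\ell-1}$ edges, so it is not covered by edges through one fixed vertex.
\item Using this in a coordinate chosen via a pigeonhole over the finitely many relabellings, build an edge $e'\supseteq X$ of $F$ in which all vertices of $e'\setminus X$ come after $\min_\sigma X$. Choosing $X$ extremal (e.g.\ $\min_\sigma X$ as small as possible over all valid choices) should make this possible.
\item This contradicts step 2.
\end{enumerate}
If this direct route stalls, the fallback is a Ramsey-type argument. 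Find inside $[m]^s$ a sub-box on which $\sigma$ is lexicographic with respect to some coordinate order. On such a sub-box, $\sigma$ is induced by orderings of the factors, so $\sigma$ would yield an $\ell$-vanishing order of a suitable $G_i$ (after fixing the other coordinates along an edge). This contradicts \cref{diff}. Making that reduction precise is where I expect the real work to lie.
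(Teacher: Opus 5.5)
Your construction and density bound are fine and follow the paper's route. The random $m$-set argument is a hand-made version of \cref{subset}, and pigeonholing over $\mathcal G_m$ plus supersaturation stands in for \cref{supersaturation} before you use $F\subseteq G_i(m^{s-1})$.

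The gap is the part you flag yourself: you never prove that $F$ has no $\ell$-vanishing order. Step~4 is only asserted, and the mechanism you suggest cannot supply it. Restricting a single coordinate $i$, chosen by pigeonhole over relabellings, controls only the $i$-th coordinates of the vertices of $e'$. Even with $\min_\sigma X$ chosen minimal, all you know about the $\sigma$-predecessors of $\min_\sigma X$ is that any two of them agree in some coordinate. Such a family can take every value in every coordinate: take all vectors agreeing with a fixed $(a,b,c)$ in at least two of the first three coordinates. So restricting one coordinate of $e'$ cannot keep $e'\setminus X$ after $\min_\sigma X$. The Ramsey fallback does not obviously help either. Here $m$ is fixed by the density argument, so a lexicographic sub-box $A_1\times\cdots\times A_s\subseteq[m]^s$ will have small sides. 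The hypothesis $\delta_\ell(G_i)>\binom{m-\ell-1}{k-\ell-1}$ does not pass to $G_i[A_i]$, so \cref{diff} cannot be invoked.

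The missing idea is to choose a \emph{pair} extremally, and then to avoid one vertex in \emph{every} coordinate at once.
\begin{enumerate}
\item Call $u,w\in[m]^s$ disjoint if they differ in every coordinate. Since $\ell\ge2$ and each $\delta_\ell(G_i)\ge1$, two vertices lie in a common edge of $F$ iff they are disjoint.
\item Let $v$ be the $\sigma$-first vertex that has a disjoint $\sigma$-predecessor $u$. Take any edge $e=\{x_1,\dots,x_k\}\ni u,v$ in $\sigma$-order. Minimality forces $x_1=u$ and $x_2=v$. Put $X=\{v,x_3,\dots,x_{\ell+1}\}$.
\item In each coordinate $i$, at most $\binom{m-\ell-1}{k-\ell-1}<\delta_\ell(G_i)$ edges of $G_i$ through $X_i$ contain $u_i$. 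So some edge through $X_i$ avoids $u_i$.
\item Assembling these edges coordinatewise gives an edge $e'\supseteq X$ of $F$ all of whose vertices are disjoint from $u$.
\item Suppose some $w\in e'$ preceded $v$. Then $u$ and $w$ are disjoint and both precede $v$, so the later of the two has a disjoint predecessor and comes before $v$. This contradicts the choice of $v$.
\item Hence $v$ is first in $e'$ but second in $e$, so $\boldsymbol I_{X,\sigma}(e)\ne\boldsymbol I_{X,\sigma}(e')$, contradicting \cref{index}.
\end{enumerate}
No pigeonhole over relabellings is needed: this is just \cref{diff} lifted to the product. It works for a tensor product of any $k$-graphs with $\delta_\ell>\binom{m-\ell-1}{k-\ell-1}$; the whole family $\mathcal G_m$ is used only for the density bound.
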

 
We need the following two lemmas to prove \cref{lowdensity}.

\begin{lemma}[Lo and Markstr\"{o}m, \cite{l-degree}]\label{supersaturation}   
Let $k > \ell \geq 1$ and $t>0$, and let $\mathcal{F}$ be a family of $k$-graphs with $\mathcal{F}(t)=\{F(t)~:~F\in\mathcal{F}\}$. 
Then $\pi_{\ell}(\mathcal{F})=\pi_{\ell}(\mathcal{F}(t))$.
\end{lemma}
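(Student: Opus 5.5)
We argue for the family version as stated; the single-graph case is identical. The inequality $\pi_\ell(\mathcal F)\le\pi_\ell(\mathcal F(t))$ is immediate. Every $F\in\mathcal F$ is a subhypergraph of $F(t)$, so an $\mathcal F$-free $k$-graph is also $\mathcal F(t)$-free. Hence $\ex_\ell(n,\mathcal F)\le \ex_\ell(n,\mathcal F(t))$ for every $n$.

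For the reverse inequality, fix $\varepsilon>0$ and write $\pi=\pi_\ell(\mathcal F)$. Let $H$ be an $n$-vertex $k$-graph with $\delta_\ell(H)\ge(\pi+\varepsilon)\binom{n-\ell}{k-\ell}$ and $n$ large. We want to find a copy of some $F(t)$, $F\in\mathcal F$. Since $\mathcal F(t)$ is indexed by $\mathcal F$, we may assume $\mathcal F$ is finite (or at least that only boundedly many vertex counts matter); the argument below only uses members on at most $m$ vertices.

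\textbf{Step 1 (supersaturation via random samples).} Choose a constant $m$ so large that every $m$-vertex $k$-graph $G$ with $\delta_\ell(G)\ge(\pi+\varepsilon/2)\binom{m-\ell}{k-\ell}$ contains a member of $\mathcal F$. This is possible by the definition of $\pi_\ell$. Now take a uniformly random $m$-subset $M\subseteq V(H)$. For a fixed $\ell$-set $S\subseteq M$, the degree $d_{H[M]}(S)$ counts $(k-\ell)$-subsets of the hypergeometric sample $M\setminus S$ that lie in $L_H(S)$. Its expectation is at least $(\pi+\varepsilon)\binom{m-\ell}{k-\ell}$. A bounded-differences estimate (McDiarmid, or \cref{lemperm-bd} applied to a random bijection) gives
\[
\mathbb P\Bigl(d_{H[M]}(S)<(\pi+\varepsilon/2)\tbinom{m-\ell}{k-\ell}\Bigr)\le e^{-cm}.
\]
A union bound over the $\binom m\ell$ choices of $S$ shows that, for $m$ large, at least half of all $m$-sets $M$ satisfy $\delta_\ell(H[M])\ge(\pi+\varepsilon/2)\binom{m-\ell}{k-\ell}$. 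Each such $M$ therefore contains a copy of some member of $\mathcal F$, and that member has at most $m$ vertices. A fixed copy on $v\le m$ vertices lies in only $\binom{n-v}{m-v}$ of the sets $M$. Pigeonholing over the finitely many $F\in\mathcal F$ with $|V(F)|\le m$ then yields a single $F$ on $v$ vertices with $\Omega(n^{v})$ labelled copies in $H$.

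\textbf{Step 2 (from many copies to a blow-up).} Write $V(F)=\{1,\dots,v\}$. Consider the $v$-graph $\mathcal G$ on $V(H)$ whose edges are the vertex sets of these labelled copies. To keep track of labels, we first randomly split $V(H)$ into $v$ parts $P_1,\dots,P_v$. We then keep only those copies in which vertex $i$ of $F$ lands in $P_i$. In expectation a $v^{-v}$ fraction survives, so some partition keeps $\Omega(n^{v})$ copies. The resulting $v$-partite $v$-graph has positive density. By Erdős's theorem (zero Turán density of $K^{(v)}_{v}(t)$), it therefore contains a complete $v$-partite $v$-graph with parts $T_i\subseteq P_i$, each of size $t$. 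Every transversal $(x_1,\dots,x_v)\in T_1\times\dots\times T_v$ spans a copy of $F$ with vertex $i$ mapped to $x_i$. Hence every edge $\{i_1,\dots,i_k\}$ of $F$ becomes a complete $k$-partite $k$-graph on $T_{i_1},\dots,T_{i_k}$, which is exactly a copy of $F(t)$ in $H$. This gives $\pi_\ell(\mathcal F(t))\le\pi+\varepsilon$ for every $\varepsilon>0$.

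\textbf{Main obstacle.} The step requiring the most care is Step 1. One must check that the sampled $m$-set inherits the minimum $\ell$-degree simultaneously for all of its $\ell$-subsets, not merely on average. The concentration bound for the hypergeometric count of link edges supplies this: the bounded-differences constant is $\binom{m-1}{k-\ell-1}$, which gives an $e^{-cm}$ tail, and this tail beats the $\binom m\ell$ union bound once $m$ is large.
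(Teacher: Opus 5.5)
The paper does not prove this lemma (it is quoted from Lo and Markstr\"{o}m), and your argument is the standard proof behind it. Random $m$-subsets inherit the minimum $\ell$-degree (exactly \cref{subset}), averaging gives $\Omega(n^{v})$ copies of a single $F$, and Erd\H{o}s's theorem on the auxiliary $v$-partite $v$-graph yields $F(t)$. Your proof is correct, and it also covers infinite $\mathcal F$, since only the finitely many members on at most $m$ vertices are ever used.

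One small correction concerns the $e^{-cm}$ tail. It does not follow from \cref{lemperm-bd} applied to a bijection of the $(n-\ell)$-set $V(H)\setminus S$, which only gives $\exp(-\Theta(m^2/n))$. Nor does it follow from \cref{lem:mcdiarmid}, because a uniform $m$-set does not have independent coordinates. Either cite \cref{subset} directly, or apply \cref{azuma} to the martingale that exposes the $m-\ell$ vertices of $M\setminus S$ one at a time, each step changing the count by at most $\binom{m-\ell-1}{k-\ell-1}$.
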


\begin{lemma}[Lo and Markstr\"{o}m, \cite{l-degree}]\label{subset}   
For any integers $k > \ell \geq 1$ and any reals $\alpha, \varepsilon > 0$ with $\alpha + \varepsilon < 1$, there exists an integer $M(k, \ell, \varepsilon)$ such that the following holds. 
Let $n \geq m \geq M(k, \ell, \varepsilon)$ and let $H$ be a $k$-graph on $[n]$ with 
$\delta_\ell(H) \geq (\alpha + \varepsilon) \binom{n}{k-\ell}$.
Then the number of $m$-subsets $S$ of $[n]$ satisfying 
\[
\delta_\ell(H[S]) > \alpha \binom{m}{k-\ell}
\]
is at least $\frac{1}{2} \binom{n}{m}$.
\end{lemma}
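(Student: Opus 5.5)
The plan is a first-moment argument over a uniformly random $m$-subset, with concentration for each fixed $\ell$-set inside it. Let $S$ be a uniformly random $m$-subset of $[n]$. It suffices to show that, for $m$ large in terms of $k,\ell,\varepsilon$,
\[
\mathbb P\Bigl(\exists\, T\in S^{(\ell)}: d_{H[S]}(T)\le \alpha\tbinom{m}{k-\ell}\Bigr)\le \tfrac12 .
\]
I would bound the left side by a union bound over $\ell$-sets. For a fixed $T\in[n]^{(\ell)}$, write
\[
\mathbb P\bigl(T\subseteq S,\ d_{H[S]}(T)\text{ small}\bigr)=\mathbb P(T\subseteq S)\cdot \mathbb P\bigl(d_{H[S]}(T)\text{ small}\bigm| T\subseteq S\bigr).
\]
Summing $\mathbb P(T\subseteq S)$ over all $T$ gives exactly $\binom{m}{\ell}$. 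So it is enough to show that the conditional probability is at most $e^{-c\varepsilon^2 m}$ for some $c=c(k,\ell)>0$; then $\binom{m}{\ell}e^{-c\varepsilon^2 m}<\frac12$ once $m\ge M(k,\ell,\varepsilon)$.

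Conditioned on $T\subseteq S$, the set $S' := S\setminus T$ is a uniformly random $(m-\ell)$-subset of $U:=[n]\setminus T$. Moreover, $Z:=d_{H[S]}(T)$ is the number of edges of the $(k-\ell)$-graph $L_H(T)$ lying inside $S'$. Each $(k-\ell)$-subset of $U$ lies in $S'$ with probability $\binom{m-\ell}{k-\ell}/\binom{n-\ell}{k-\ell}$. Hence
\[
\mathbb E Z = d_H(T)\,\frac{\binom{m-\ell}{k-\ell}}{\binom{n-\ell}{k-\ell}}\ge (\alpha+\varepsilon)\frac{\binom{n}{k-\ell}}{\binom{n-\ell}{k-\ell}}\binom{m-\ell}{k-\ell}\ge \Bigl(\alpha+\tfrac{3\varepsilon}{4}\Bigr)\binom{m}{k-\ell},
\]
using $\binom{n}{k-\ell}\ge\binom{n-\ell}{k-\ell}$ and $\binom{m-\ell}{k-\ell}=(1-O(1/m))\binom{m}{k-\ell}$, valid for $m$ large.

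The remaining step, and the only mildly delicate one, is concentration, since $S'$ is sampled without replacement. I would realise $S'$ as the image of the first $m-\ell$ positions under a uniformly random bijection $\rho:[n-\ell]\to U$, and view $Z=f(\rho)$. Transposing two values of $\rho$ swaps at most one element in and one out of $S'$. This changes $Z$ by at most $L:=2\binom{m-1}{k-\ell-1}$, since only edges through the affected vertices can change status. A direct application of \cref{lemperm-bd} with $t=\frac{\varepsilon}{4}\binom{m}{k-\ell}$ gives the bound $\exp(-t^2/(2(n-\ell)L^2))$. Its exponent depends on $n$ rather than $m$, so it is too weak when $n\gg m$.

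To avoid this, I would instead expose the $m-\ell$ vertices of $S'$ one at a time, each drawn uniformly from what remains. I then consider the Doob martingale of $Z$ with respect to this sequential exposure. A standard coupling bounds each increment by $O(\binom{m-1}{k-\ell-1})$: conditioned on the first $i$ draws, changing the $i$-th draw can be coupled so that all later draws differ in at most one element. Azuma's inequality (\cref{azuma}) with $m-\ell$ steps then yields
\[
\mathbb P\Bigl(Z<\alpha\tbinom{m}{k-\ell}\Bigr)\le \exp\!\left(-\frac{(\varepsilon/4)^2\binom{m}{k-\ell}^2}{O\bigl(m\binom{m-1}{k-\ell-1}^2\bigr)}\right)\le e^{-c\varepsilon^2 m}.
\]
This is the required conditional bound, and plugging it into the union bound completes the proof. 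I expect verifying the bounded-difference coupling for sampling without replacement to be the main technical point; the rest is routine bookkeeping.
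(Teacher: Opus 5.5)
The paper gives no proof of this lemma; it quotes it as a black box from Lo and Markstr\"om. So there is no in-paper argument to compare against. Your argument is a correct and self-contained proof of the statement: a uniformly random $m$-set, a union bound over $\ell$-sets with $\sum_T\mathbb P(T\subseteq S)=\binom{m}{\ell}$, the exact expectation $d_H(T)\binom{m-\ell}{k-\ell}/\binom{n-\ell}{k-\ell}$, and concentration for sampling without replacement.

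The two delicate points you flag both check out.

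First, you are right that \cref{lemperm-bd} applied to a bijection of all $n-\ell$ points gives an exponent only of order $\varepsilon^2m^2/n$. Against the $\binom{m}{\ell}$ union-bound factor this is useless once $n$ is much larger than $m^2$, so switching to sequential exposure is genuinely necessary.

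Second, in the swap coupling, condition on $s_i=a$ versus $s_i=b$ and replace $b$ by $a$ wherever $b$ is drawn later. The two final sets are then either identical (if $b$ appears later) or differ by exchanging $a$ and $b$. Hence each Doob increment is at most $2\binom{m-\ell-1}{k-\ell-1}$. Since $\binom{m}{k-\ell}=\frac{m}{k-\ell}\binom{m-1}{k-\ell-1}$, Azuma over $m-\ell$ steps gives an exponent of order $\varepsilon^2 m/(k-\ell)^2$, independent of $n$, as needed.

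One small bookkeeping remark: the ``valid for $m$ large'' step in your expectation bound requires roughly $m\ge C_{k,\ell}(\alpha+\varepsilon)/\varepsilon$. It is the hypothesis $\alpha+\varepsilon<1$ that makes this threshold, and hence $M$, depend only on $(k,\ell,\varepsilon)$ and not on $\alpha$, as the statement requires. It is worth saying so explicitly.
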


\begin{proof}[Proof of \cref{lowdensity}]
To construct the required $k$-graphs, we consider the tensor product of all $k$-graphs on a fixed number of vertices with a large minimum $\ell$-degree. Precisely,  
for integers $m>k\geq 3$, let $\mathcal{F}_{m,\ell}^{(k)}:=\{F_1, F_2, \dots, F_s\}$ be the family of all $m$-vertex $k$-graphs with minimum $\ell$-degree at least $\binom{m-\ell-1}{k-\ell-1}+1$. 
Then we define ${F}_{m,\ell}^{(k)}$ to be the $k$-graph with vertex set $V(F_1)\times V(F_2)\times\dots\times V(F_s)$ such that $k$ vertices form an edge of ${F}_{m,\ell}^{(k)}$ if and only if for every $1\leq i\leq s$, the $i$-th coordinates of the $k$ vertices form an edge of $F_i$. 
The $k$-graph ${F}_{m,\ell}^{(k)}$ is sometimes referred to as the \textit{tensor product} of the elements of $\mathcal{F}^{(k)}_{m,\ell}$. 
A simple but useful fact is that $F_{m,\ell}^{(k)}$ is a subgraph of the blowup $F_i(m^{s-1})$ for any $1\leq i\leq s$.  

We first show that $F_{m,\ell}^{(k)}$ has no $\ell$-vanishing order for all integers $m>k>\ell\geq 2$.
For any two vertices $u,v\in V\left(F^{(k)}_{m,\ell}\right)$, we say that $u$ and $v$ are \textit{disjoint} if their $i$-th coordinates are distinct for every $1\leq i\leq s$. 
By the definition of $F^{(k)}_{m,\ell}$, any given $\ell$ vertices are contained in an edge if and only if they are pairwise disjoint. 
Let $\sigma$ be an arbitrary ordering of $V\left(F^{(k)}_{m,\ell}\right)$. 
We show that $\sigma$ cannot be an $\ell$-vanishing order of $F^{(k)}_{m,\ell}$. 

Let $v$ be the vertex with the smallest $\sigma(v)$ such that there exists a vertex $u$ with $\sigma(u)<\sigma(v)$ which is disjoint from $v$. 
Fix an edge $e\in E\left(F^{(k)}_{m,\ell}\right)$ containing $u$ and $v$, and assume that $e=\{x_1,x_2,\ldots,x_k\}$ with $\sigma(x_1)<\sigma(x_2)<\cdots<\sigma(x_k)$. 
By the choice of $v$, we have $x_1=u$ and $x_2=v$. 
For all $1\leq i\leq s$, since $\delta_\ell(F_i)\geq\binom{m-\ell-1}{k-\ell-1}+1$, there exists an edge $e_i\in F_i$ that contains all the $i$-th coordinates of $v,x_3,\ldots,x_{\ell+1}$ but does not contain the $i$-th coordinate of $u$. 
This implies that there is an edge $e'\in E\left(F^{(k)}_{m,\ell}\right)$ with 
$\{v,x_3,\ldots,x_{\ell+1}\}\subseteq e'$ such that every vertex in $e'$ is disjoint from $u$. 
By the choice of $v$, $\sigma(v)$ is the smallest among all vertices in $e'$. 
Setting $X=\{v,x_3,\ldots,x_{\ell+1}\}$, it follows that $\boldsymbol{I}_{X,\sigma}(e')\neq\boldsymbol{I}_{X,\sigma}(e)$, which implies by \cref{index} that $\sigma$ is not an $\ell$-vanishing order of $F^{(k)}_{m,\ell}$.  

To prove \cref{lowdensity}, it remains to show that, for every $\varepsilon>0$ and every
integer $k\ge3$, there exists $M>0$ such that
$\pi_\ell\left(F^{(k)}_{m,\ell}\right)<\varepsilon$
for every $m\ge M$.  It is enough to consider $0<\varepsilon<1$. Choose $M$ so large that $M\ge M(k,\ell,\varepsilon/8)$ from \cref{subset}, and such that for
all $m\ge M$,
$\frac{\varepsilon}{8}\binom{m}{k-\ell}
\geq \binom{m-\ell-1}{k-\ell-1}+1,$
and for all $n\ge M$,
$\binom{n-\ell}{k-\ell}\ge \frac12\binom{n}{k-\ell}.$
Let $n\ge m\ge M$, and let $H$ be an $n$-vertex $k$-graph with
$\delta_\ell(H)\ge \frac{\varepsilon}{2}\binom{n-\ell}{k-\ell}\ge \frac{\varepsilon}{4}\binom{n}{k-\ell}.$
Applying \cref{subset} with $\alpha=\varepsilon/8$ and error parameter $\varepsilon/8$, we get
an $m$-subset $S\subseteq V(H)$ such that
\[
\delta_\ell(H[S])>\frac{\varepsilon}{8}\binom{m}{k-\ell}
\geq \binom{m-\ell-1}{k-\ell-1}+1.
\]
Hence $H[S]\in \mathcal{F}^{(k)}_{m,\ell}$.  Therefore every sufficiently large $k$-graph with
minimum $\ell$-degree at least $(\varepsilon/2)\binom{n-\ell}{k-\ell}$ contains a member of
$\mathcal F^{(k)}_{m,\ell}$, and so
$\pi_\ell\left(\mathcal{F}^{(k)}_{m,\ell}\right)\le \frac{\varepsilon}{2}.$
By \cref{supersaturation},
$\pi_\ell\left(\mathcal{F}^{(k)}_{m,\ell}(m^{s-1})\right)
=\pi_{\ell}\left(\mathcal{F}^{(k)}_{m,\ell}\right)
\leq \frac{\varepsilon}{2}.$
Since $F_{m,\ell}^{(k)}$ is contained in $F(m^{s-1})$ for every
$F\in\mathcal{F}^{(k)}_{m,\ell}$, we conclude that
\[
\pi_\ell\left(F^{(k)}_{m,\ell}\right)
\leq \pi_\ell\left(\mathcal{F}^{(k)}_{m,\ell}(m^{s-1})\right)
\leq \frac{\varepsilon}{2}<\varepsilon,
\]
which completes the proof.
\end{proof}

We now give the proof of \cref{j-accumulation}.

\begin{proof}[Proof of \cref{j-accumulation}]
By \cref{2-vanishing}, we know that any $k$-graph without $2$-vanishing order has a positive $2$-degree Tur\'an density. 
Then it follows from \cref{lowdensity} that zero is an accumulation point of $\Pi_{2}^{k}$ for all $k\geq 3$. 
Therefore, to prove \cref{j-accumulation}, it suffices to show that for $\ell\geq3$, if $\Pi_{\ell-1}^{k-1}$ has zero as an accumulation point, then so does $\Pi_{\ell}^{k}$. 
For any $\varepsilon>0$, let $F$ be a $(k-1)$-graph with $0<\pi_{\ell-1}(F)<\varepsilon$. 
Then by \cref{suspension} we have $\pi_{\ell}(\mathcal{S}_F)>0$. 
On the other hand, $\pi_{\ell}(\mathcal{S}_F)\le \pi_{\ell-1}(F)<\varepsilon$, and hence zero is an accumulation point of $\Pi_{\ell}^{k}$.   
\end{proof}

\section{Concluding remarks}\label{remark}
In this paper, we proved that any $k$-graph $F$ with $\pi_{2}(F)=0$ admits a $2$-vanishing order, thus providing a necessary condition for a $k$-graph to have vanishing $2$-degree Tur\'an density. 
A natural follow-up question is how close this necessary condition is to being sufficient.
From the proof of \cref{2-vanishing}, one can see that $(2,1^{k-2})$-type $k$-graphs play an important role, and we in fact obtain a $2$-vanishing cluster order. It would be interesting to test on this family of hypergraphs.

\begin{problem}
    Characterize $(2,1^{k-2})$-type $k$-graphs with vanishing $2$-degree Tur\'an density.
\end{problem}

\cref{general} has a connection to uniform Tur\'an densities defined as follows. 
For reals $d\in[0,1], \eta>0$ and integers $0\leq \ell< k$, an $n$-vertex $k$-graph $H$ is \textit{uniformly $(d,\eta,\ell)$-dense} if 
$|\mathcal{K}_k(G)\cap E(H)|\geq d|\mathcal{K}_k(G)|-\eta n^k$
holds for all $\ell$-graphs $G$ on vertex set $V(H)$, 
where $\mathcal{K}_k(G)$ denotes the collection of all $k$-subsets of $V(G)$ each of which spans a clique $K_k^{(\ell)}$ of $G$.
Given a $k$-graph $F$, the \textit{$\ell$-uniform Tur\'an density} of $F$ is defined as
\begin{align*}\label{j-turan-dense}
\pi_{\rm u}^{(\ell)}(F) = \sup \{ d\in [0,1] & : \text{for\ every\ } \eta>0,\ \text{there\ exist\ infinitely\ many}  \\
&\quad   F \text{-free\ uniformly} \ (d,\eta,\ell) \text{-dense}\ k \text{-graphs\ }\}.
\end{align*}
It is known that for any $k$-graph $F$
$$0=\pi_{\rm u}^{(k-1)}(F)\leq\pi_{\rm u}^{(k-2)}(F)\leq\cdots\leq\pi_{\rm u}^{(1)}(F)\leq\pi_{\rm u}^{(0)}(F)=\pi(F).$$
Particularly, $1$-uniform Tur\'an density is just the uniform Tur\'an density $\pi_{\points}$.
Such problems were first suggested by Erd\H{o}s and S\'{o}s \cite{Erdos-sos} for $3$-graphs and have been extensively studied in the past decade. See, for instance, ~\cite{orderfive,cycledan,1/27dan,K43-dan,ander,zhou2,reihersurvey,reiher2018hypergraphs,K43-rodl}. 

An interesting conjecture of Reiher, R\"{o}dl and Schacht, \cite{reiher2018hypergraphs} relates vanishing order to zero uniform Tur\'an density.

\begin{conjecture}[\cite{reiher2018hypergraphs}]\label{generaluniform}
 For a $k$-graph $F$ and integers $1\leq \ell\leq k-3$, $\pi_{\rm u}^{(\ell)}(F)=0$ if and only if $F$ has an $(\ell+1)$-vanishing order. 
\end{conjecture}

If \cref{generaluniform} holds, then \cref{2-vanishing} would imply that for all $k\ge 3$, the condition $\pi_2(F)=0$ forces $\pi_{\rm u}^{(1)}(F)=\pi_{\points}(F)=0$.

\medskip
\noindent{\bf Acknowledgements.~}The authors would like to thank David Conlon, Guanghui Wang and Shuaichao Wang for helpful discussions.

\bibliographystyle{abbrv}
\bibliography{references.bib}

@article{orderfive,
  title={Tur\'{a}n density of cliques of order five in 3-uniform hypergraphs with quasirandom links},
  author={Berger, S\"{o}ren and Piga, Sim\'{o}n and Reiher, Christian and R{\"o}dl, Vojt{\v{e}}ch and Schacht, Mathias},
  journal={Transactions of the American Mathematical Society},
  volume={378},
  number={12},
  pages={8283-8318},
  year={2025},
  publisher={}
}

@article{K43-dan,
  title={A problem of {Erd\H{o}s and S{\'o}s} on 3-graphs},
  author={Glebov, Roman and Kr{\'a}l', Daniel and Volec, Jan},
  journal={Israel Journal of Mathematics},
  volume={211},
  number={1},
  pages={349-366},
  year={2016},
  publisher={}
}

@article{K43-rodl,
  title={On a {Tur{\'a}n} problem in weakly quasirandom 3-uniform hypergraphs},
  author={Reiher, Christian and R{\"o}dl, Vojt{\v{e}}ch and Schacht, Mathias},
  journal={Journal of the European Mathematical Society},
  volume={20},
  number={5},
  pages={1139-1159},
  year={2018},
  publisher={}
}

@article{reihersurvey,
  title={Extremal problems in uniformly dense hypergraphs},
  author={Reiher, Christian},
  journal={European Journal of Combinatorics},
  volume={88},
  number={},
  pages={103117},
  year={2020},
  publisher={}
}

@article{l-degree,
  title={$\ell$-degree {Tur\'{a}n} density},
  author={Lo, Allan and Markstr\"{o}m, Klas},
  journal={SIAM Journal on Discrete Mathematics},
  volume={28},
  number={3},
  pages={1214-1225},
  year={2014},
  publisher={}
}

@article{reiher2018hypergraphs,
  title={Hypergraphs with vanishing {Tur{\'a}n} density in uniformly dense hypergraphs},
  author={Reiher, Christian and R{\"o}dl, Vojt{\v{e}}ch and Schacht, Mathias},
  journal={Journal of the London Mathematical Society},
  volume={97},
  number={1},
  pages={77--97},
  year={2018},
  publisher={Wiley Online Library}
}

@article{1/27dan,
  title={Hypergraphs with minimum positive uniform {Tur{\'a}n} density},
  author={Garbe, Frederik and Kr{\'a}l', Daniel and Lamaison, Ander},
  journal={Israel Journal of Mathematics},
  volume={259},
  pages={701-726},
  year={2024},
  publisher={Springer}
}

@article{zhou2,
  title={The minimum positive uniform {Tur{\'a}n} density in uniformly dense $k$-uniform hypergraphs},
  author={Lin, Hao and Wang, Guanghui and Zhou, Wenling},
  journal={SIAM Journal on Discrete Mathematics},
  volume={39},
  number={4},
  pages={2350-2378},
  year={2025},
  publisher={}
}

@article{l2norm,
  title={Hypergraph {Tur\'{a}n} problems in $\ell_2$-norm},
  author={Balogh, J\'{o}zsef and Clemen, Felix Christian and Lidick\'{y}, Bernard},
  journal={London Mathematical Society Lecture Note Series},
  volume={481},
  number={},
  pages={21--63},
  year={2022},
  publisher={}
}

@article{James,
  title={Hypergraphs of arbitrary uniformity with vanishing codegree {Tur\'{a}n} density},
  author={Sarkies, James},
  journal={arXiv:2503.23591},
  year={2025}
}

@article{Luo-cycle,
  title={A jump in the codegree {Tur\'{a}n} densities of long tight cycles},
  author={Balogh, J\'{o}zsef and Luo, Haoran and Sankar, Maya},
  journal={arXiv:2602.18398},
  year={2026}
}

@article{Rong-cycle,
  title={The codegree {Tur\'{a}n} density of tight cycles},
  author={Ma, Jie and Rong, Mingyuan},
  journal={arXiv:2512.23011},
  year={2025}
}

@article{cycledan,
  title={Uniform {Tur{\'a}n} density of cycles},
  author={Buci{\'c}, Matija and Cooper, Jacob W. and Kr{\'a}l', Daniel and Mohr, Samuel and Correia, David Munh{\'a}},
  journal={Transactions of the American Mathematical Society},
  volume={376},
  number={7},
  pages={4765-4809},
  year={2023},
  publisher={}
}

@article{wilson1975existence,
  title={{An} existence theory for pairwise balanced designs, {III}: Proof of the existence conjectures},
  author={Wilson, Richard M},
  journal={Journal of Combinatorial Theory, Series A},
  volume={18},
  number={1},
  pages={71--79},
  year={1975},
  publisher={Elsevier}
}

@article{ding20243,
  title={On $3$-graphs with vanishing codegree {Tur\'{a}n} density},
  author={Ding, Laihao and Lamaison, Ander and Liu, Hong and Wang, Shuaichao and Yang, Haotian},
  journal={Journal of the London Mathematical Society (2)},
  volume={},
  year={2025, {DOI}: 10.1112/jlms.70281}
}

@article{ander,
  title={Palettes determine uniform {Tur\'{a}n} density},
  author={Lamaison, Ander},
  journal={arXiv:2408.09643},
  volume={},
  year={2024}
}

@article{keevash2007codegree,
  title={Codegree problems for projective geometries},
  author={Keevash, Peter and Zhao, Yi},
  journal={Journal of Combinatorial Theory, Series B},
  volume={97},
  number={6},
  pages={919--928},
  year={2007},
  publisher={Elsevier}
}

@article{lo2018codegree,
      title={Codegree {Tur\'{a}n} density of complete $r$-uniform hypergraphs}, 
      author={Allan Lo and Yi Zhao},
      journal={SIAM Journal on Discrete Mathematics},
  volume={32},
  number={2},
  pages={1154-1158},
  year={2018},
  publisher={}
}

@article{piga2024codegree,
  title={The codegree {Tur\'{a}n} density of $3$-uniform tight cycles},
  author={Piga, Sim{\'o}n and Sanhueza-Matamala, Nicol{\'a}s and Schacht, Mathias},
  journal={Journal of Combinatorial
Theory, Series B},
  volume={176},
  number={},
  pages={1-6},
  year={2026}
}

@article{ma2024codegree,
  title={On codegree {Tur\'{a}n} density of the 3-uniform tight cycle {$C_{11}$}},
  author={Ma, Jie},
  journal={arXiv:2409.02765},
  year={2024}
}

@article{piga2023hypergraphs,
  title={Hypergraphs with arbitrarily small codegree {Tur{\'a}n} density},
  author={Piga, Sim{\'o}n and Sch{\"u}lke, Bjarne},
  journal={Bulletin of the London Mathematical Society},
  volume={58},
  number={4},
  pages={e70348},
  year={2026}
}

@article{Accu-Turan-Bjarne,
  title={Hypergraphs Accumulate},
  author={Conlon, David and Sch{\"u}lke, Bjarne},
  journal={International Mathematics Research Notice},
  volume={2025},
  number={2},
  pages={rnae289},
  year={2025}
}

@article{erdos-sos,
  title={On {Ramsey–Tur{\'a}n} type theorems for hypergraphs},
  author={Erd\H{o}s, Paul and S{\'o}s, Vera T.},
  journal={Combinatorica},
  volume={2},
  number={3},
  pages={289–295},
  year={1982},
  publisher={}
}

@article{K43-codegree,
  title={The codegree threshold of {$K_4^-$}},
  author={Falgas-Ravry, Victor and Pikhurko, Oleg and Vaughan, Emil and Volec, Jan},
  journal={Journal of the London Mathematical Society (2)},
  volume={107},
  pages={1660-1691},
  year={2023},
  publisher={}
}

@article{erdos1964extremal,
  title={On extremal problems of graphs and generalized graphs},
  author={Erd{\H o}s, Paul},
  journal={Israel Journal of Mathematics},
  volume={2},
  number={3},
  pages={183--190},
  year={1964},
  publisher={Springer}
}

@article{zhang2021codegree,
  title={On the Codegree Density of {$PG_m(q)$}},
  author={Zhang, Tao and Ge, Gennian},
  journal={SIAM Journal on Discrete Mathematics},
  volume={35},
  number={3},
  pages={1548--1556},
  year={2021},
  publisher={SIAM}
}

@article{mubayi2005co,
  title={The co-degree density of the {Fano} plane},
  author={Mubayi, Dhruv},
  journal={Journal of Combinatorial Theory, Series B},
  volume={95},
  number={2},
  pages={333--337},
  year={2005},
  publisher={Elsevier}
}

@article{falgas2015codegree,
  title={The codegree threshold for 3-graphs with independent neighborhoods},
  author={Falgas--Ravry, Victor and Marchant, Edward and Pikhurko, Oleg and Vaughan, Emil R},
  journal={SIAM Journal on Discrete Mathematics},
  volume={29},
  number={3},
  pages={1504--1539},
  year={2015},
  publisher={SIAM}
}

@article{piga2022codegree,
  title={The codegree {Tur{\'a}n} density of tight cycles minus one edge},
  author={Piga, Sim{\'o}n and Sales, Marcelo and Sch{\"u}lke, Bjarne},
  journal={Combinatorics, Probability and Computing},
  volume={32},
  number={},
  pages={881-884},
  year={2023},
  publisher={}
}

@article{K43nagle,
  title={A note on codegree problems for hypergraphs},
  author={Czygrinow, Andrzej and Nagle, Brendan},
  journal={Bulletin of the Institute of Combinatorics and its Applications},
  volume={32},
  number={},
  pages={63-69},
  year={2001},
  publisher={}
}

@article{K43-nagle,
  title={{Tur{\'a}n}-related problems for hypergraphs},
  author={Nagle, Brendan},
  journal={Congressus Numerantium},
  volume={},
  number={},
  pages={119–128},
  year={1999},
  publisher={}
}

@article{McDiarmid2002,
  author  = {McDiarmid, Colin},
  title   = {Concentration for Independent Permutations},
  journal = {Combinatorics, Probability and Computing},
  volume  = {11},
  number  = {2},
  pages   = {163--178},
  year    = {2002},
  doi     = {10.1017/S0963548301005089}
}

@incollection{McDiarmid1989,
  author    = {McDiarmid, Colin},
  title     = {On the Method of Bounded Differences},
  booktitle = {Surveys in Combinatorics 1989},
  series    = {London Mathematical Society Lecture Note Series},
  volume    = {141},
  pages     = {148--188},
  publisher = {Cambridge University Press},
  year      = {1989}
}
\end{document}